\crefname{defin}{Definition}{Definitions}
\crefname{eg}{Example}{Examples}
\crefname{lem}{Lemma}{Lemmas}
\crefname{theo}{Theorem}{Theorems}
\crefname{equation}{}{}
\crefname{enumi}{}{}
\newcommand\C{\mathbb{C}}
\newcommand\Z{\mathbb{Z}}
\newcommand\N{\mathbb{N}}
\newcommand\kk{\Bbbk}
\newcommand\ba{\mathbf{a}}
\newcommand\bb{\mathbf{b}}
\newcommand\bc{\mathbf{c}}
\newcommand\op{\mathrm{op}}             
\newcommand\tr{\mathrm{tr}}
\newcommand\aff{\textup{aff}}
\newcommand\md{\textup{-mod}}
\DeclareMathOperator{\End}{End}
\DeclareMathOperator{\Hom}{Hom}
\DeclareMathOperator{\id}{id}
\DeclareMathOperator{\Ind}{Ind}
\DeclareMathOperator{\Res}{Res}
\DeclareMathOperator{\Span}{Span}
\newtheorem{theo}{Theorem}[section]
\newtheorem{prop}[theo]{Proposition}
\newtheorem{lem}[theo]{Lemma}
\newtheorem*{lem*}{Lemma}
\newtheorem{cor}[theo]{Corollary}
\theoremstyle{definition}
\newtheorem{defin}[theo]{Definition}
\newtheorem{rem}[theo]{Remark}
\newtheorem{eg}[theo]{Example}
\numberwithin{equation}{section}
  \newcommand{\acomments}[1]{
    \ \\
    {\color{red}
      \textbf{AS:} #1
    }
    \\
    }
  \newcommand{\dcomments}[1]{
    \ \\
    {\color{red}
      \textbf{DR:} #1
    }
    \\
    }
  \newcommand{\acomments}[1]{}
  \newcommand{\dcomments}[1]{}
  \newcommand{\details}[1]{
      \ \\
      {\color{OliveGreen}
        \textbf{Details:} #1
      }
      \\
  }
  \newcommand{\details}[1]{}
\begin{document}
%

\title{Quantum affine wreath algebras}

\author{Daniele Rosso}
\address[D.R]{
  Department of Mathematics and Actuarial Science \\
  Indiana University Northwest
}
\urladdr{\href{http://pages.iu.edu/~drosso/}{pages.iu.edu/~drosso/}}
\email{drosso@iu.edu}

\author{Alistair Savage}
\address[A.S.]{
  Department of Mathematics and Statistics \\
  University of Ottawa
}
\urladdr{\href{https://alistairsavage.ca}{alistairsavage.ca}, \textrm{\textit{ORCiD}:} \href{https://orcid.org/0000-0002-2859-0239}{orcid.org/0000-0002-2859-0239}}
\email{alistair.savage@uottawa.ca}

\begin{abstract}
  To each symmetric algebra we associate a family of algebras that we call \emph{quantum affine wreath algebras}.  These can be viewed both as symmetric algebra deformations of affine Hecke algebras of type $A$ and as quantum deformations of affine wreath algebras.  We study the structure theory of these new algebras and their natural cyclotomic quotients.
\end{abstract}

\subjclass[2010]{Primary 20C08; Secondary 16S35}
\keywords{Hecke algebra, Yokonuma--Hecke algebra, symmetric algebra, Frobenius algebra}

\maketitle
\thispagestyle{empty}


\section{Introduction}

Affine Hecke algebras and their degenerate versions are fundamental in the study of Lie algebras and quantum groups.  \emph{Affine wreath algebras}, whose systematic study was undertaken in \cite{Sav17}, provide a unifying and generalizing framework for various modified versions of degenerate affine Hecke algebras (of type $A$) appearing in the literature.\footnote{The terminology \emph{affine wreath product algebras} was used in \cite{Sav17}.  We drop the word ``product'' in the current paper for simplicity.}  (Certain cases of these algebras were also considered in \cite{KM15}.)  Affine wreath algebras also occur naturally as endomorphism algebras in the Frobenius Heisenberg categories of \cite{Sav18,RS17}.  It is natural to ask if such a general approach exists in the quantum (i.e.\ non-degenerate) setting.  The purpose of the current paper is to answer this question in the affirmative.

Fix a commutative ground ring $\kk$ and $z \in \kk$.  To any symmetric superalgebra $A$, we associate a \emph{quantum wreath algebra} $H_n(A,z)$.  The superalgebra $H_n(A,z)$ can be viewed as a $z$-deformation of the wreath algebra $A^{\otimes n} \rtimes S_n$, in the sense that $H_n(A,0) = A^{\otimes n} \rtimes S_n$.  Simultaneously, $H_n(A,z)$ can be thought of as an $A$-deformation of the Iwahori--Hecke algebra of type $A$.  In particular, taking $A = \kk = \C[q,q^{-1}]$ and $z=q-q^{-1}$ recovers the Iwahori--Hecke algebra.  We then define an affine version, the \emph{quantum affine wreath algebra} $H_n^\aff(A,z)$.  Again, this superalgebra can be viewed simultaneously as a $z$-deformation of the affine wreath algebras of \cite{Sav17} and as an $A$-deformation of the affine Hecke algebra of type $A$.

The quantum affine wreath algebras defined in the current paper unify and generalize existing analogs of affine Hecke algebras.  In particular, we have the following:
\begin{enumerate}
  \item When $A = \C$, the affine wreath algebra is the degenerate affine Hecke algebra of type $A$.  As noted above, when $A = \C[q,q^{-1}]$ and $z = q-q^{-1}$, the quantum affine wreath algebra is the affine Hecke algebra of type $A$.

  \item When $A$ is the group algebra of a finite group $G$, the affine wreath algebra is the wreath Hecke algebra of Wan and Wang \cite{WW08}.  When $G$ is a finite cyclic group, the quantum (affine) wreath algebra is the (affine) Yokonuma--Hecke algebra.  (See \cref{YHalg,affYHalg} for details.)  For more general groups, the quantum affine wreath algebra seems to be new.

  \item When $A$ is a certain skew-zigzag algebra (see \cite[\S3]{HK01} and \cite[\S5]{Cou16}), the corresponding affine wreath algebras appear in the endomorphism algebras of the categories constructed in \cite{CL12} to study Heisenberg categorification and the geometry of Hilbert schemes.  They were then also considered in \cite{KM15}, where they were related to imaginary strata for quiver Hecke algebras (also known as KLR algebras).
    \details{
      The skew-zigzag algebras considered in \cite{CL12} are symmetric (i.e.\ $\psi = \id$) in the super sense, but not in the usual (i.e.\ non-super) sense.  The zigzag algebras appearing in \cite{KM15} (which assumes all algebras are even) differ by some signs from those of \cite{CL12}, and are symmetric in the usual sense.
    }
    For this choice of $A$, the quantum affine wreath algebras of the current paper yield natural $z$-deformations of these affine zigzag algebras.  These deformations seem to be new.
\end{enumerate}

Despite their high level of generality, one can deduce a great deal of the structure of quantum affine wreath algebras.  Specializing the symmetric superalgebra $A$ then recovers known results in some cases and new results in others.  In addition, just as the affine wreath algebras appear as endomorphism algebras in the Frobenius Heisenberg categories of \cite{Sav18,RS17}, the quantum affine wreath algebras defined in the current paper appear as endomorphism algebras in the quantum Frobenius Heisenberg categories of \cite{BS19}.  In fact, this is one of the main motivations of the current paper.  The quantum Frobenius Heisenberg category acts on categories of modules for the quantum cyclotomic wreath algebras introduced here.  This action generalizes the action of the quantum Heisenberg category of \cite{BSW18} (see also \cite{LS13}) on categories of modules for cyclotomic Hecke algebras.

We now give an overview of the main results of the current paper.  We define the quantum (affine) wreath algebras in Section~\ref{sec:def} and discuss some natural symmetries.  In Section~\ref{sec:structure} we examine the structure theory of these algebras.  We first introduce natural Demazure operators which are useful in computations.  We then describe an explicit basis of $H_n^\aff(A,z)$ in \cref{affbasis}, and the center of $H_n^\aff(A,z)$ in \cref{center}.  Finally, we define natural Jucys--Murphy elements in \cref{subsec:JM} and give a Mackey Theorem for $H_n^\aff(A,z)$ in \cref{affMackey}.  In \cref{sec:cyclotomic} we turn our attention to cyclotomic quotients.  We define the quantum cyclotomic wreath algebra $H_n^f(A,z)$ associated to a monic polynomial $f$ with coefficients in the even part of the center $Z(A)$ of $A$.  These quotients are analogues of cyclotomic Hecke algebras.  We prove a basis theorem (\cref{theo:cyclobasis}) for these quotients and a cyclotomic Mackey Theorem (\cref{cycloMackey}).  Finally, we prove that the quantum cyclotomic wreath algebras are symmetric algebras and that $H_{n+1}^f(A,z)$ is a Frobenius extension of $H_n^f(A,z)$.

We expect that most of the results of the current paper can be generalized to the setting where $A$ is a Frobenius superalgebra, instead of a symmetric superalgebra (see \cref{Nakayama}).  This more general setting was treated in the degenerate case in \cite{Sav17} since the choice of $A$ to be the Clifford superalgebra, which is not symmetric in the super sense, yielded the affine Sergeev algebra (also called the degenerate affine Hecke--Clifford superalgebra).  However, in the quantum setting of the current paper we choose to focus on the case where $A$ is symmetric for simplicity.  In fact, the Clifford case is more naturally treated by considering an odd affinization of the quantum wreath algebra.  This will be explored in future work.

\iftoggle{detailsnote}{
\subsection*{Hidden details} For the interested reader, the tex file of the arXiv version of this paper includes hidden details of some straightforward computations and arguments that are omitted in the pdf file.  These details can be displayed by switching the \texttt{details} toggle to true in the tex file and recompiling.
}{}

\subsection*{Acknowledgements}

The first author was supported in this research by a Grant-in-Aid of Research and a Summer Faculty Fellowship from Indiana University Northwest. This research of the second author was supported by Discovery Grant RGPIN-2017-03854 from the Natural Sciences and Engineering Research Council of Canada.  We thank J.\ Brundan for helpful conversations.

\section{Definitions\label{sec:def}}

In this section we introduce our main objects of study.  Throughout the document, we fix a commutative ground ring $\kk$ of characteristic not equal to two.  (This assumption on the characteristic is not needed if one works in the non-super setting.)  We also fix an element $z \in \kk$.  All tensor products and algebras are over $\kk$ unless otherwise specified.  In addition, all algebras and modules are associative superalgebras and supermodules.  We drop the prefix ``super'' for simplicity.  For a homogeneous element $a$, we use the notation $\bar{a}$ to denote its parity.  We use $\N$ to denote the set of nonnegative integers.

\subsection{Quantum wreath algebras}

Fix a symmetric algebra $A$ with parity-preserving linear supersymmetric trace map $\tr \colon A \to \kk$.  (Here we consider $\kk$ to live in parity zero.)  In other words, the map
\[
  A \to \Hom_\kk(A,\kk),\quad a \mapsto \big( b \mapsto \tr(ab) \big),
\]
is a parity-preserving isomorphism of $\kk$-modules and, for homogeneous elements $a, b$,
\[
  \tr(ab) = (-1)^{\bar a \bar b} \tr(ba),\quad a,b \in A.
\]
We will assume that $A$ is free as a $\kk$-module.  So we have a basis $B$ with dual basis $\{b^\vee : b \in B\}$ defined by
\[
  \tr(a^\vee b) = \delta_{a,b},\quad a,b \in B.
\]
It follows from the supersymmetry of the trace that
\begin{equation} \label{ddual}
  (b^\vee)^\vee = (-1)^{\bar b} b,\quad b \in B.
\end{equation}

Fix $n \in \Z_{>0}$.  For $a \in A$ and $1 \le i \le n$, we define
\[
  a_i = 1^{\otimes (i-1)} \otimes a \otimes 1^{\otimes (n-i)} \in A^{\otimes n}.
\]

\begin{defin}[Quantum wreath algebra]
  For $n \in \N$, $n \ge 2$, we define the \emph{quantum wreath algebra} (or \emph{Frobenius Hecke algebra}) $H_n(A,z)$ to be the free product
  \[
    A^{\otimes n} \star \langle T_i : 1 \le i \le n-1 \rangle,
  \]
  (here the angled brackets mean the free associative algebra on the given generators) modulo the relations
  \begin{align}
    T_i T_j &= T_j T_i, &1 \le i,j \le n-1,\ |i-j| > 1, \label{farcomm}\\
    T_i T_{i+1} T_i &= T_{i+1} T_i T_{i+1}, &1 \le i \le n-2, \label{braid}\\
    T_i^2 &= z t_{i,i+1} T_i + 1, &1 \le i \le n-1, \label{quadratic} \\
    T_i \ba &= s_i(\ba) T_i, &\ba \in A^{\otimes n},\ 1 \le i \le n-1, \label{TF}
  \end{align}
  where
  \[
    t_{i,j} := \sum_{b \in B} b_i b_j^\vee,\quad
    1 \le i,j \le n-1,
  \]
  and $s_i(\ba)$ denotes the action of the simple transposition $s_i$ on $\ba$ by superpermutation of the factors.  It is straightforward to verify that $t_{i,j}$ does not depend on the choice of basis $B$.  We adopt the conventions that $H_1(A,z) := A$ and $H_0(A,z) := \kk$.
\end{defin}

For $w \in S_n$, we define
\[
  T_w = T_{i_1} T_{i_2} \dotsm T_{i_k},
\]
where $w = s_{i_1} s_{i_2} \dotsm s_{i_k}$ is a reduced decomposition.  Since the generators $T_i$ satisfy the braid relations \cref{farcomm,braid}, this definition is independent of the choice of reduced decomposition.

\begin{rem}
  In the degenerate case, it was shown in \cite[Lem.~3.2]{Sav17} that affine wreath algebras depend, up to isomorphism, only on the underlying algebra $A$, and not on the trace map.  However, in the quantum setting of the current paper, there do not seem to be obvious isomorphisms between quantum affine wreath algebras corresponding to the same algebra, but with different trace maps.
\end{rem}

\begin{eg}[Iwahori--Hecke algebras]
  If $A = \kk = \C[q,q^{-1}]$ and $z = q-q^{-1}$, then $H_n(A,z)$ is the Iwahori--Hecke algebra of type $A_{n-1}$.
\end{eg}

\begin{eg}[Yokonuma--Hecke algebras] \label{YHalg}
  Let $C_d$ be a cyclic group of order $d$.  If $\kk = \C[q,q^{-1}]$, $z=(q-q^{-1})/d$, and $A = \kk C_d$, with trace map given by projection onto the identity element of the group, then $H_n(\kk C_d,z)$ is the Yokonuma--Hecke algebra (see \cite[\S2.1]{CPd14}).
  \details{
    Let $y$ be a generator of $C_d$.  The isomorphism with the presentation in \cite[\S2.1]{CPd14} is given by:
    \[
      H_n(\kk C_d,z) \to Y_{d,n}(q),\quad
      T_i \mapsto g_i,\quad
      y_j \mapsto t_j,\quad
      1 \le i \le n-1,\ 1 \le j \le n.
    \]
  }
\end{eg}

It follows from \cref{ddual} that
\[
  t_{i,j} = t_{j,i},\quad 1 \le i,j \le n.
\]
Then, by \cref{TF}, we have
\[
  T_i t_{j,k} = t_{s_i(j), s_i(k)} T_i,\quad 1 \le i \le n-1,\ 1 \le j,k \le n.
\]
In particular,
\[
  T_i t_{i,i+1} = t_{i,i+1} T_i,\quad 1 \le i \le n-1.
\]
It then follows from \cref{quadratic} that the $T_i$ are invertible and we have a \emph{Frobenius skein relation}:
\begin{equation} \label{skein}
  T_i - T_i^{-1} = z t_{i,i+1},\quad 1 \le i \le n-1.
\end{equation}
We also have
\begin{equation} \label{teleport}
  \ba t_{j,k} = t_{j,k} s_{j,k}(\ba),\quad
  \ba \in A^{\otimes n},\ 1 \le i,j,k \le n,
\end{equation}
where $s_{j,k}$ is the transposition of $j$ and $k$.  For this reason, we call the $t_{i,j}$ \emph{teleporters}.  (In the string diagram formalism for monoidal categories, \cref{teleport} corresponds to tokens teleporting between strands.  See \cite[\S2.1]{Sav18}.)

\subsection{Quantum affine wreath algebras}

\begin{defin}[Quantum affine wreath algebra]
  For $n \in \N$, $n \ge 1$, we define the \emph{quantum affine wreath algebra} (or \emph{affine Frobenius Hecke algebra}) $H_n^\aff(A,z)$ to be the free product of algebras
  \[
    \kk[X_1^{\pm 1},\dotsc,X_n^{\pm 1}] \star H_n(A,z),
  \]
  modulo the relations
  \begin{align}
    T_i X_j &= X_j T_i, & 1 \le i \le n-1,\ 1 \le j \le n,\ j \ne i,i+1, \label{commute} \\
    T_i X_i T_i &= X_{i+1}, &1 \le i \le n-1, \label{inverse} \\
    X_i \ba &= \ba X_i, &1 \le i \le n,\ \ba \in A^{\otimes n}. \label{pna}
  \end{align}
  We define $H_{n,+}^\aff(A,z)$ to be the subalgebra of $H_n^\aff(A,z)$ generated by $H_n(A,z)$ together with $\kk[X_1, \dotsc, X_n]$ (no inverses).  We adopt the convention that $H_0^\aff(A,z) = H_{0,+}^\aff(A,z) := \kk$.
\end{defin}

\begin{eg}[Affine Hecke algebras] \label{eg:affHecke}
  If $A = \kk = \C[q,q^{-1}]$ and $z = q-q^{-1}$, then $H_n^\aff(A,z)$ is the affine Hecke algebra of type $A_{n-1}$.
\end{eg}

\begin{eg}[Affine Yokonuma--Hecke algebras] \label{affYHalg}
  In the setting of \cref{YHalg}, $H_n^\aff(\kk C_d,z)$ is the affine Yokonuma--Hecke algebra (see \cite[\S3.1]{CPd14}).
  \details{
    Let $y$ be a generator of $C_d$.  The isomorphism with the presentation in \cite[\S3.1]{CPd14} is given by:
    \[
      H_n^\aff(\kk C_d,z) \to \hat{Y}_{d,n}(q),\quad
      T_i \mapsto g_i,\quad
      y_j \mapsto t_j,\quad
      X_j \mapsto X_j,\quad
      1 \le i \le n-1,\quad 1 \le j \le n.
    \]
  }
\end{eg}

\begin{rem} \label{Nakayama}
  One can work in the more general setting where $A$ is a Frobenius algebra.  In general, there exists a Nakayama automorphism $\psi \colon A \to A$ such that $\tr(ab) = (-1)^{\bar{a} \bar{b}} \tr(b\psi(a))$ for all $a,b \in A$.  Then we modify the relation \cref{pna} to be $\ba X_i = X_i \psi_i(\ba)$, where $\psi_i = 1^{\otimes (i-1)} \otimes \psi \otimes 1^{\otimes (n-i)}$.  In the current paper we focus on the symmetric case, where $\psi=1$, for simplicity.  However, it is this more general setting that motivates our use of the word ``Frobenius'' in some of our terminology.
\end{rem}

\subsection{Symmetries}

It is straightforward to verify that we have an algebra automorphism of $H_n^\aff(A,z)$ given by
\begin{equation}
  X_i^{\pm 1} \mapsto X_{n+1-i}^{\pm 1},\quad
  a_i \mapsto a_{n+1-i},\quad
  T_j \mapsto -T_{n-j}^{-1} = -T_{n-j}+zt_{n-j,n-j+1},
\end{equation}
for all $1\leq i\leq n$, $1\leq j\leq n-1$, $a\in A$.

Any algebra automorphism $\xi \colon A \to A$ preserving the trace (i.e.\ $\tr \circ \xi = \tr$) induces an algebra automorphism of $H_n^\aff(A,z)$ given by
\begin{equation}
  X_i^{\pm 1} \mapsto X_{i}^{\pm 1},\quad
  \ba \mapsto \xi^{\otimes n}(\ba),\quad
  T_j\mapsto T_j,
\end{equation}
for all $1\leq i\leq n$, $1\leq j\leq n-1$, $\ba\in A^{\otimes n}$.
\begin{lem}
  Suppose that $\tau \colon A\to A^\op$ is an isomorphism of symmetric algebras (i.e.\ an algebra isomorphism preserving the trace map).  Then the map
  \[
    \widehat{\tau} \colon H_n^\aff(A,z) \to H_n^\aff(A,z)^\op,\quad
    X_i^{\pm 1} \mapsto X_{i}^{\pm 1},\quad
    \ba \mapsto \tau^{\otimes n}(\ba),\quad
    T_j\mapsto T_j,
  \]
  for all $1\leq i\leq n$, $1\leq j\leq n-1$, $\ba\in A^{\otimes n}$, is an isomorphism of algebras.
\end{lem}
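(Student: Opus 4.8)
The plan is to verify that $\widehat\tau$ respects the defining relations of $H_n^\aff(A,z)$, where now the target is the opposite algebra, so that products get reversed. Concretely, I need to check that for each defining relation $P = Q$ of $H_n^\aff(A,z)$ (read with its generators in the stated order), the images satisfy $\widehat\tau(P) = \widehat\tau(Q)$ in $H_n^\aff(A,z)^\op$; since multiplication in the opposite algebra reverses order, this amounts to checking the ``reversed'' relations in $H_n^\aff(A,z)$ itself. I would first record the key consequence of $\tau$ being an isomorphism of symmetric algebras: because $\tau$ preserves the trace and $B$ is a basis with dual basis $\{b^\vee\}$, the set $\tau(B)$ is again a basis, and one computes $\tr\big(\tau(a)^\vee \tau(b)\big) = \delta_{a,b}$, which forces $\tau(b)^\vee = \tau(b^\vee)$ (up to the usual parity bookkeeping). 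Hence $\tau^{\otimes n}$ sends the teleporter $t_{i,j} = \sum_{b\in B} b_i b_j^\vee$ to $\sum_{b} \tau(b)_i \tau(b^\vee)_j = t_{i,j}$ computed in the opposite algebra — and since $t_{i,j}$ is symmetric in the two tensor factors and independent of basis, it is fixed by $\widehat\tau$. This is the one substantive input; everything else is bookkeeping.

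With that in hand I would run through the relations. The braid relations \cref{farcomm} and \cref{braid} are symmetric under reversal (a reduced word reversed is still reduced, and $T_iT_{i+1}T_i = T_{i+1}T_iT_{i+1}$ reversed is itself), so they are automatic. The quadratic relation $T_i^2 = z\, t_{i,i+1} T_i + 1$ reversed reads $T_i^2 = z\, T_i t_{i,i+1} + 1$, which holds in $H_n(A,z)$ because $T_i$ commutes with $t_{i,i+1}$ (noted in the excerpt). The relation $T_i \ba = s_i(\ba) T_i$ becomes, after applying $\widehat\tau$ and reversing, $\tau^{\otimes n}(\ba)\, T_i = T_i\, \tau^{\otimes n}(s_i(\ba))$; since $\tau^{\otimes n}$ intertwines the $S_n$-action by superpermutation — $\tau^{\otimes n}(s_i(\ba)) = s_i(\tau^{\otimes n}(\ba))$ — and since $\tau$ is an \emph{anti}-homomorphism so that a product $\ba$ in $A^{\otimes n}$ maps appropriately, this reduces to \cref{TF} applied to $\tau^{\otimes n}(\ba)$. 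For the affine part: \cref{commute} and \cref{pna} are again symmetric under reversal (and $\widehat\tau$ fixes the $X_j$), while \cref{inverse}, $T_i X_i T_i = X_{i+1}$, reversed is $T_i X_i T_i = X_{i+1}$ — itself. So $\widehat\tau$ is a well-defined algebra homomorphism $H_n^\aff(A,z) \to H_n^\aff(A,z)^\op$. Finally, applying the same construction to $\tau^{-1} \colon A^\op \to A$ (equivalently, viewing $\tau$ as its own inverse up to the canonical identification $(A^\op)^\op = A$) produces a two-sided inverse, so $\widehat\tau$ is an isomorphism.

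The main obstacle — really the only place requiring care — is the interaction of the anti-multiplicativity of $\tau$ with the tensor structure and the parity signs: one must check that $\tau^{\otimes n} \colon A^{\otimes n} \to (A^{\otimes n})^\op = (A^\op)^{\otimes n}$ is genuinely an algebra map into the opposite, which uses the sign in the multiplication of a tensor product of superalgebras, and that this is compatible with the identification of $t_{i,j}$ under $\tau$. Once the identity $\tau(b)^\vee = \tau(b^\vee)$ and the equivariance $\tau^{\otimes n}\circ s_i = s_i \circ \tau^{\otimes n}$ are established, all the relation checks above are short and mechanical, and I would relegate the sign-tracking to the hidden details.
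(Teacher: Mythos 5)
Your proposal is correct and matches the paper's argument: the paper likewise verifies the defining relations, with the single substantive input being that $\tau$ fixes the teleporters, $\tau^{\otimes n}(t_{i,i+1})=t_{i,i+1}$ (which it cites from the proof of \cite[Lem.~3.9]{Sav17}, the same dual-basis computation you sketch), and concludes invertibility from the inverse $\widehat{\tau^{-1}}$ exactly as you do. No further comment is needed.
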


\begin{proof}
  It is straightforward to verify that $\widehat{\tau}$ preserves the defining relations of $H_n^\aff(A,z)$, once it is noted that $\widehat{\tau}(t_{i,i+1})=t_{i,i+1}$ (see the proof of \cite[Lem.~3.9]{Sav17}).  So $\widehat{\tau}$ is indeed a homomorphism of algebras.  That $\widehat{\tau}$ is an isomorphism follows from the fact that it has inverse $\widehat{\tau^{-1}}$.
\end{proof}

Recall that the \emph{center} of $A$ is
\[
  Z(A) := \left\{ a \in A : ab = (-1)^{\bar a \bar b} ba \text{ for all } b \in A \right\}.
\]

\begin{lem}\label{rescale}
  Let $a \in Z(A)$ be invertible and even.  Then there exists a unique algebra automorphism $\zeta_a \colon H_n^\aff(A,z) \to H_n^\aff(A,z)$ given by
  \[
    \ba \mapsto \ba,\quad
    T_i \mapsto T_i, \quad
    X_j \mapsto a_j X_j,\quad
    X_j^{-1}\mapsto a^{-1}_j X^{-1}_j,
  \]
  for $\ba\in A^{\otimes n}$, $1 \le i \le n-1$, $1 \leq j \leq n$.
\end{lem}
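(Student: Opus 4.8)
The strategy is the standard one for verifying that a prescribed assignment on generators extends to an algebra homomorphism: check that the images satisfy all the defining relations, so that we obtain a homomorphism $\zeta_\ba$; then exhibit an inverse to conclude it is an automorphism. Since $H_n^\aff(A,z)$ is presented as a free product $\kk[X_1^{\pm1},\dots,X_n^{\pm1}] \star H_n(A,z)$ modulo the relations \cref{commute,inverse,pna}, and $H_n(A,z)$ is in turn presented by \cref{farcomm,braid,quadratic,TF}, I would go relation by relation. The relations internal to $H_n(A,z)$ (namely \cref{farcomm,braid,quadratic,TF}) involve only the $T_i$ and the $\ba$, all of which are fixed by the proposed map, so they are preserved trivially. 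Likewise $X_j X_j^{-1} = 1 = X_j^{-1} X_j$ is preserved because $a_j$ is invertible, with $(a_j X_j)(a_j^{-1} X_j^{-1}) = a_j a_j^{-1} X_j X_j^{-1} = 1$ using that $X_j$ commutes with $a_j^{-1}$ via \cref{pna}; and $X_i X_j = X_j X_i$ is preserved since $a_i X_i$ and $a_j X_j$ commute (the $a$'s are central in $A$, hence the images $a_i, a_j \in A^{\otimes n}$ commute, the $X$'s commute with each other, and $X_j$ commutes with $a_i$ by \cref{pna}).

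The three relations that require a genuine (but short) computation are \cref{commute,inverse,pna}. For \cref{pna}, $\ba (a_j X_j) = a_j \ba X_j = a_j X_j \ba$, using that $a_j$ is central in $A^{\otimes n}$ — here I use that $a$ is even and central so $a_j$ commutes with all of $A^{\otimes n}$ without signs — and \cref{pna} itself; so \cref{pna} is preserved. For \cref{commute} with $j \ne i, i+1$: $T_i (a_j X_j) = a_j T_i X_j = a_j X_j T_i$, where the first step uses \cref{TF} together with the fact that $s_i$ fixes $a_j$ when $j \ne i, i+1$, and the second uses \cref{commute}; so this relation is preserved. The key relation is \cref{inverse}: I must check $T_i (a_i X_i) T_i = a_{i+1} X_{i+1}$. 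Compute $T_i a_i X_i T_i = s_i(a_i)\, T_i X_i T_i = a_{i+1}\, X_{i+1}$, where the first equality is \cref{TF} and the second is that $s_i(a_i) = a_{i+1}$ (the superpermutation sends the $i$-th tensor factor to the $(i+1)$-st, with no sign since $a$ is even) together with \cref{inverse}. So all relations hold, giving a homomorphism $\zeta_\ba$.

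For the automorphism claim and uniqueness: uniqueness is immediate since $H_n^\aff(A,z)$ is generated by the $\ba$, the $T_i$, and the $X_j^{\pm1}$, on which $\zeta_\ba$ is prescribed. For invertibility, the natural candidate is $\zeta_{\ba^{-1}}$, which exists by the same argument applied to the invertible even central element $a^{-1}$; one checks $\zeta_{\ba^{-1}} \circ \zeta_\ba$ and $\zeta_\ba \circ \zeta_{\ba^{-1}}$ fix all generators — e.g. $X_j \mapsto a_j X_j \mapsto a_j^{-1} a_j X_j = X_j$, using \cref{pna} to move $a_j^{-1}$ past $X_j$ — hence both composites are the identity.

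**Anticipated obstacle.** There is no serious obstacle; the only place to be slightly careful is the bookkeeping of signs and of the superpermutation action in \cref{TF} and \cref{inverse}. The hypothesis that $a$ is \emph{even} is exactly what makes $s_i(a_i) = a_{i+1}$ hold without a sign and makes $a_j$ genuinely central (not just supercentral) in $A^{\otimes n}$, so I would flag the use of that hypothesis at the relevant step rather than treat any step as truly hard.
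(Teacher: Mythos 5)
Your proof is correct and follows essentially the same route as the paper: check the defining relations (the ones internal to $H_n(A,z)$ being trivially preserved, with the only real work in \cref{commute,inverse,pna}, where evenness and centrality of $a$ enter exactly as you flag), then note that $\zeta_{a^{-1}}$ is an inverse. Your extra remarks on the Laurent relations and on uniqueness are fine but routine additions to what the paper records.
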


\begin{proof}
  It is straightforward to verify that $\zeta_a$ preserves the defining relations of $H_n^\aff(A,z)$.
  \details{
    Since $\zeta_a$ is the identity on $H_n(A,z)$, relations \cref{farcomm,braid,quadratic,TF} are preserved.   We then just need to check \cref{commute,inverse,pna}.
    For $1 \leq i \leq n-1$ and $j \neq i,i+1$, we have
    \[
      \zeta_a(T_i X_j)
      = T_i a_j X_j
      \stackrel{\cref{TF}}{=} a_j T_i X_j
      \stackrel{\cref{commute}}{=} a_j X_j T_i
      = \zeta_a(X_jT_i),
    \]
    and so \cref{commute} is preserved.

    For $1 \leq i \leq n-1$, we have
    \[
      \zeta_a(T_iX_iT_i)
      = T_i a_i X_i T_i
      \stackrel{\cref{TF}}{=} a_{i+1} T_i X_i T_i
      \stackrel{\cref{inverse}}{=} a_{i+1} X_{i+1}
      = \zeta_a(X_{i+1}),
    \]
    which shows that \cref{inverse} is preserved.

    Finally, for $\ba\in A^{\otimes n}$ and $1 \leq j \leq n$, we have
    \[
      \zeta_a(X_j\ba)
      = a_j X_j \ba
      \stackrel{\cref{pna}}{=} a_j \ba X_j
      = \ba a_j X_j
      = \zeta_a(\ba X_j),
    \]
    where, in the third equality, we used the fact that $a$ is even and central.  Thus \cref{pna} is preserved.
  }
  Then, since $\zeta_a$ is invertible with inverse $\zeta_{a^{-1}}$, it is an automorphism.
\end{proof}

\section{Structure theory\label{sec:structure}}

In this section we examine the structure theory of quantum affine wreath algebras.  In particular, we describe a basis, the center, Jucys--Murphy elements, and a Mackey Theorem.

\subsection{Demazure Operators}

Let
\begin{equation}
  P_n = \kk[X_1^{\pm 1},\dotsc,X_n^{\pm 1}]
  \quad \text{and} \quad
  P_n(A) = A^{\otimes n} \otimes P_n \quad \text{(tensor product of algebras)}.
\end{equation}
We will use the notation $f,g$ to denote elements of $P_n(A)$ and the notation $p,q$ to denote elements of $P_n$.  By abuse of notation, for $f \in P_n(A)$ we will also denote by $f$ its image under the natural homomorphism $P_n(A)\to H_n^\aff(A,z)$.  In fact, it will follow from \cref{affbasis} that this homomorphism is injective, allowing us to view $P_n(A)$ as a subalgebra of $H_n^\aff(A,z)$.

We consider two different actions of $S_n$ on $P_n(A)$.  For $w \in S_n$ and $f \in P_n(A)$, we let $w(f)$ denote the action by permuting the $X_i$ and superpermuting the factors of $A^{\otimes n}$, i.e.\ the diagonal action on $A^{\otimes n} \otimes P_n$.  On the other hand, we let $^w f$ denote the action given by
\[
  ^w(\ba \otimes p) = \ba \otimes w(p).
\]
So this action permutes the $X_i$, but is $A^{\otimes n}$-linear.  Of course $w(p) = {}^w p$ for $p \in P_n$.

For $1 \le i \le n-1$, we have the \emph{Demazure operators}
\begin{equation}
  \Delta_i \colon P_n(A) \to P_n(A),\quad
  \Delta_i(f) = \frac{f - {^{s_i}f}}{1-X_iX_{i+1}^{-1}}.
\end{equation}
It is straightforward to verify that
\begin{equation} \label{leibniz}
  \Delta_i(fg) = \Delta_i(f)g + {}^{s_i}f \Delta_i(g),\quad f,g \in P_n(A).
\end{equation}
\details{
  We have
  \[
    \Delta_i(f g)
    = \frac{f g - {^{s_i} f} {^{s_i} g}}{1-X_iX_{i+1}^{-1}}
    = \frac{f g - {^{s_i} f} g + {^{s_i} f} g - {^{s_i} f} {^{s_i} g}}{1-X_iX_{i+1}^{-1}}
    = \Delta_i(f) g + {^{s_i}f} \Delta_i(g).
  \]
}
In particular,
\begin{equation} \label{sneak}
  \Delta_i(fg) = f \Delta_i(g) \quad
  \text{if} \quad {}^{s_i}f = f.
\end{equation}

\begin{lem}
  For $f \in P_n(A)$ and $1\leq i,j \leq n-1$, $|i-j|>1$, we have
  \begin{gather}
    ^{s_i} \Delta_i(f) = X_{i}X_{i+1}^{-1}\Delta_i(f), \quad
    \Delta_i \big( {}^{s_i} f \big) = -\Delta_i(f), \quad
    \Delta_i \big( {}^{s_j} f \big) = {}^{s_j} \Delta_i(f), \label{Delts} \\
    s_i \big( \Delta_i(f) \big) = - X_i X_{i+1}^{-1} \Delta_i \big( s_i(f) \big), \quad
    s_i \big( \Delta_j(f) \big) = \Delta_j \big( s_i(f) \big), \label{Delter} \\
    \Delta_i^2 = \Delta_i, \label{Delt1} \\
    \Delta_i \Delta_j = \Delta_j \Delta_i, \label{Delt2} \\
    \Delta_i\Delta_{i+1}\Delta_i=\Delta_{i+1}\Delta_i\Delta_{i+1} \label{Deltbraid} \quad \text{for } 1 \le i \le n-2.
  \end{gather}
\end{lem}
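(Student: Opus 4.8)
The plan is to reduce everything to classical facts about Demazure operators via the decomposition $P_n(A) = A^{\otimes n} \otimes P_n$. Since ${}^{s_i}(\ba \otimes p) = \ba \otimes s_i(p)$, we have $\Delta_i(\ba \otimes p) = \ba \otimes (p - s_i(p))/(1 - X_iX_{i+1}^{-1})$, so $\Delta_i$, ${}^{s_i}$, and ${}^{s_j}$ are $A^{\otimes n}$-linear operators induced from the corresponding ones on the Laurent ring $P_n$. I would first record the three identities in \cref{Delts} by direct manipulation of the defining fraction: applying ${}^{s_i}$ negates the numerator $f - {}^{s_i}f$ and sends the denominator to $1 - X_{i+1}X_i^{-1} = -X_{i+1}X_i^{-1}(1 - X_iX_{i+1}^{-1})$, giving ${}^{s_i}\Delta_i(f) = X_iX_{i+1}^{-1}\Delta_i(f)$; the identity $\big({}^{s_i}\big)^2 = \id$ gives $\Delta_i\big({}^{s_i}f\big) = -\Delta_i(f)$; and for $|i-j|>1$ the operator ${}^{s_j}$ fixes $X_i, X_{i+1}$ and commutes with ${}^{s_i}$, giving $\Delta_i\big({}^{s_j}f\big) = {}^{s_j}\Delta_i(f)$.

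Feeding ${}^{s_i}\Delta_i(f) = X_iX_{i+1}^{-1}\Delta_i(f)$ back into the definition of $\Delta_i\big(\Delta_i(f)\big)$ and using that $1 - X_iX_{i+1}^{-1}$ is central yields \cref{Delt1}, and \cref{Delt2} is immediate because for $|i-j|>1$ the operators $\Delta_i, \Delta_j$ act on disjoint variables and tensor slots. For the braid relation \cref{Deltbraid}, I would note that on $P_n$ we have $\Delta_i = \id - \pi_i$, where $\pi_i(p) = \big(X_i p - X_{i+1}\, s_i(p)\big)/(X_i - X_{i+1})$ is the standard isobaric Demazure operator; since $\pi_i$ is idempotent and satisfies the braid relations, so does $\id - \pi_i$ — indeed, for idempotents $a,b$ in any associative ring one has $(1-a)(1-b)(1-a) - (1-b)(1-a)(1-b) = bab - aba$ — and \cref{Deltbraid} then follows by $A^{\otimes n}$-linearity. (Alternatively, \cref{Deltbraid} can be checked directly on the monomials $X_1^{k_1}\dotsm X_n^{k_n}$ using the Leibniz rule \cref{leibniz}.)

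Finally, for \cref{Delter} I would introduce the $P_n$-linear operator $\sigma_i$ on $P_n(A)$ that superpermutes the $i$th and $(i+1)$st tensor factors of $A^{\otimes n}$, so that $s_i = \sigma_i \circ {}^{s_i} = {}^{s_i} \circ \sigma_i$ and $\sigma_i$ commutes with every $\Delta_j$ and every ${}^{s_j}$ (the latter all act only on the $P_n$-factor). Then, using \cref{Delts} and $X_iX_{i+1}^{-1} \in P_n$, one computes $s_i\big(\Delta_i(f)\big) = \sigma_i\big({}^{s_i}\Delta_i(f)\big) = \sigma_i\big(X_iX_{i+1}^{-1}\Delta_i(f)\big) = X_iX_{i+1}^{-1}\Delta_i\big(\sigma_i(f)\big)$, and similarly, using \cref{Delts} and that $\sigma_i$ commutes with $\Delta_i$, one gets $-X_iX_{i+1}^{-1}\Delta_i\big(s_i(f)\big) = -X_iX_{i+1}^{-1}\sigma_i\Delta_i\big({}^{s_i}f\big) = X_iX_{i+1}^{-1}\Delta_i\big(\sigma_i(f)\big)$, so the two sides agree; the identity $s_i\big(\Delta_j(f)\big) = \Delta_j\big(s_i(f)\big)$ for $|i-j|>1$ follows the same way from the commutation of ${}^{s_i}$ with $\Delta_j$. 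The one step requiring real computation (or an appeal to the literature) is the braid relation \cref{Deltbraid}; all the rest is bookkeeping with the two commuting $S_n$-actions on $P_n(A)$.
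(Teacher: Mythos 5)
Your proposal is correct, and for most of the lemma it matches the paper's proof in substance: \cref{Delts} and \cref{Delt1} are the same direct manipulations of the defining fraction, and your operator $\sigma_i$ with $s_i = \sigma_i \circ {}^{s_i}$ is just a cleaner packaging of the paper's proof of \cref{Delter}, which writes $f = \ba p$ and acts on the two tensor factors separately (your chain of equalities for the first identity of \cref{Delter} checks out). For \cref{Delt2} you argue that $\Delta_i$ and $\Delta_j$ act on disjoint sets of variables, where the paper instead combines the third identity of \cref{Delts} with \cref{sneak}; both are fine. The genuine divergence is \cref{Deltbraid}: the paper proves it by an explicit rational-function expansion showing that $\Delta_i\Delta_{i+1}\Delta_i(f)$ and $\Delta_{i+1}\Delta_i\Delta_{i+1}(f)$ both equal the same symmetrized expression, whereas you observe that $\Delta_i = \id - \pi_i$ with $\pi_i(p) = (X_i p - X_{i+1}s_i(p))/(X_i - X_{i+1})$ the isobaric Demazure operator (this identity is correct, as is your ring identity $(1-a)(1-b)(1-a) - (1-b)(1-a)(1-b) = bab - aba$ for idempotents), so the braid relation for the $\Delta_i$ follows from idempotency and the braid relation for the $\pi_i$, which you take from the classical literature. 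This is legitimate --- the paper's remark after the lemma cites exactly these facts in \cite{Dem74} --- and your route is shorter and more conceptual; what the paper's computation buys is self-containedness, since Demazure's braid relation for $\pi_i$ is essentially the same amount of work as the computation you are avoiding, and your alternative (checking \cref{Deltbraid} on monomials via \cref{leibniz}) is not carried out. If you do cite the classical result, add one sentence noting that it is stated for the group algebra of the weight lattice over $\Z$, and that since the operators act on monomials by integral formulas (cf.\ \cref{deltamon}) the identities base-change to Laurent polynomials over an arbitrary commutative ring $\kk$, which is the setting needed here.
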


\begin{proof}
  The relations \cref{Delts} follow from straightforward computations.
  \details{
    We compute
    \begin{gather*}
      ^{s_i}\Delta_i(f)
      = \prescript{s_i}{}{\left(\frac{f-{}^{s_i}(f)}{1-X_iX_{i+1}^{-1}}\right)}
      = \frac{^{s_i}f-f}{1-X_{i+1}X_i^{-1}}
      = X_i X_{i+1}^{-1} \Delta_i(f),
      \\
      \Delta_i ({}^{s_i}f)
      = \frac{^{s_i}f-f}{1-X_iX_{i+1}^{-1}}
      = -\Delta_i(f), \\
      \Delta_i \big( {}^{s_j} f \big)
      = \frac{{}^{s_j} f - {}^{s_is_j} f}{1-X_iX_{i+1}^{-1}}
      = \prescript{s_j}{}{\left( \frac{f - {}^{s_i} f}{1-X_iX_{i+1}^{-1}} \right)}
      = {}^{s_j} \Delta_i(f).
    \end{gather*}
  }
  To see the first equation in \cref{Delter}, for $\ba \in A^{\otimes n}$, $p \in P_n$, we compute
  \begin{multline*}
    s_i \big( \Delta_i(\ba p) \big)
    = s_i(\ba) {}^{s_i} \Delta_i(p)
    \stackrel{\cref{Delts}}{=} X_i X_{i+1}^{-1} s_i(\ba) \Delta(p)
    \stackrel{\cref{Delts}}{=} - X_i X_{i+1}^{-1} s_i(\ba) \Delta({}^{s_i}p)
    \\
    = - X_i X_{i+1}^{-1} \frac{s_i(\ba p) - s_i(\ba) p}{1 - X_i X_{i+1}^{-1}}
    = - X_i X_{i+1}^{-1} \Delta_i \big( s_i(\ba p) \big).
  \end{multline*}
  The second equation in \cref{Delter} is straightforward.  To see \cref{Delt1}, we compute
  \[
    \Delta_i^2(f)
    = \frac{\Delta_i(f) - {}^{s_i}\Delta_i(f)}{1-X_iX_{i+1}^{-1}}
    \stackrel{\cref{Delts}}{=} \frac{\Delta_i(f)-X_iX_{i+1}^{-1}\Delta_i(f)}{1-X_iX_{i+1}^{-1}}
    = \Delta_i(f).
  \]
  For \cref{Delt2}, we compute
  \[
    \Delta_i \Delta_j(f)
    = \frac{\Delta_j(f) - {}^{s_i} \Delta_j(f)}{1-X_iX_{i+1}^{-1}}
    \stackrel{\cref{Delts}}{=} \frac{\Delta_j(f) - \Delta_j({}^{s_i}f)}{1-X_iX_{i+1}^{-1}}
    \stackrel{\cref{sneak}}{=} \Delta_j \Delta_i(f).
  \]

  Finally, to see \cref{Deltbraid}, we compute
  \begin{align*}
    \Delta_{i} &\Delta_{i+1} \Delta_{i} (f)
    = \Delta_i \Delta_{i+1} \left(\frac{f-{}^{s_i}f}{X_{i+1}-X_i}X_{i+1}\right) \\
    &= \Delta_i \left( \frac{f-{}^{s_i}f}{(X_{i+1}-X_i)(X_{i+2}-X_{i+1})} X_{i+1}X_{i+2} - \frac{{}^{s_{i+1}}f-{}^{s_{i+1}s_i}f}{(X_{i+2}-X_i)(X_{i+2}-X_{i+1})} X_{i+2}^2\right) \\
    &= \left( \frac{f-{}^{s_i}f}{(X_{i+1}-X_i)(X_{i+2}-X_{i+1})} X_{i+1}X_{i+2} - \frac{{}^{s_{i+1}}f-{}^{s_{i+1}s_i}f}{(X_{i+2}-X_i)(X_{i+2}-X_{i+1})} X_{i+2}^2 \right. \\
    &\quad - \left. \frac{f-{}^{s_i}f}{(X_{i+1}-X_i)(X_{i+2}-X_{i+1})} X_i X_{i+2} + \frac{{}^{s_is_{i+1}}f-{}^{s_is_{i+1}s_i}f}{(X_{i+2}-X_i)(X_{i+2}-X_{i+1})} X_{i+2}^2 \right) \frac{X_{i+1}}{X_{i+1}-X_i} \\
    &= \left( f - {}^{s_i}f - {}^{s_{i+1}}f + {}^{s_{i+1}s_i}f + {}^{s_is_{i+1}}f - {}^{s_is_{i+1}s_i}f \right) \frac{X_{i+1} X_{i+2}^2}{(X_{i+1}-X_i)(X_{i+2}-X_{i+1})(X_{i+2}-X_i)}.
  \end{align*}
  A similar computation for $\Delta_{i+1} \Delta_i \Delta_{i+1}(f)$ yields the same final expression.
  \details{
    We have
    \begin{align*}
      \Delta_{i+1} &\Delta_{i} \Delta_{i+1}(f)
      = \Delta_{i+1} \Delta_{i} \left( \frac{f-{}^{s_{i+1}}f}{X_{i+2}-X_{i+1}} X_{i+2} \right)\\
      &= \Delta_{i+1} \left( \frac{f-{}^{s_{i+1}}f}{(X_{i+1}-X_i)(X_{i+2}-X_{i+1})} X_{i+1} X_{i+2} - \frac{{}^{s_{i}}f-{}^{s_is_{i+1}}f}{(X_{i+2}-X_i)(X_{i+2}-X_{i+1}) }X_{i+1}X_{i+2} \right) \\
      &= \left( \frac{f-{}^{s_{i+1}}f}{(X_{i+1}-X_i)(X_{i+2}-X_{i+1})} X_{i+1} X_{i+2} - \frac{{}^{s_{i}}f-{}^{s_is_{i+1}}f}{(X_{i+2}-X_i)(X_{i+2}-X_{i+1})} X_{i+1} X_{i+2} + \right. \\
      &\quad - \left. \frac{f-{}^{s_{i+1}}f}{(X_{i+2}-X_i)(X_{i+2}-X_{i+1})} X_{i+1} X_{i+2} + \frac{{}^{s_{i+1}s_i}f-{}^{s_{i+1}s_is_{i+1}}f}{(X_{i+2}-X_i)(X_{i+2}-X_{i+1})} X_{i+1} X_{i+2} \right) \frac{X_{i+2}}{X_{i+2}-X_{i+1}} \\
      &= \left( f - {}^{s_i}f - {}^{s_{i+1}}f + {}^{s_{i+1}s_i}f + {}^{s_is_{i+1}}f - {}^{s_{i+1}s_is_{i+1}}f \right) \frac{X_{i+1} X_{i+2}^2}{(X_{i+1}-X_i)(X_{i+2}-X_{i+1})(X_{i+2}-X_i)}.
    \end{align*}
  }
\end{proof}

\begin{rem}
  The relations \cref{Delt1,Delt2,Deltbraid} imply that the $\Delta_i$ define an action of the $0$-Hecke algebra on $P_n(A)$.  Demazure operators first appeared in \cite{Dem74}.  Over the ring of integers, \cref{Delt1,Delt2,Deltbraid} are proved in \cite[Th.~2(a)]{Dem74} and \cite[(18)]{Dem74}.
\end{rem}

\begin{lem}\label{lem:passthrough}
  For all $f \in P_n(A)$ and $ 1 \le i \le n-1$, we have
  \begin{equation}\label{demazure}
    T_i f = s_i(f) T_i + z t_{i,i+1} \Delta_i(f).
  \end{equation}
\end{lem}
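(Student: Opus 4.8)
The plan is to reduce the general statement to two special cases by the Leibniz-type rules already established: the case $f \in A^{\otimes n}$ and the case $f \in P_n$. For the first case, $f = \ba \in A^{\otimes n}$, we have $\Delta_i(\ba) = 0$ (since $^{s_i}$ acts trivially on $A^{\otimes n}$, as $^{s_i}$ only permutes the $X$'s), so the claimed identity $T_i \ba = s_i(\ba) T_i$ is precisely relation \cref{TF}. For the second case, $f = p \in P_n$, I would prove \cref{demazure} by induction on the number of $X_j^{\pm 1}$ factors in a monomial $p$, the base case $p \in \kk$ being trivial. The inductive step requires checking the identity when $p$ is multiplied by a single $X_j$ or $X_j^{-1}$, which I handle by the following case analysis on $j$ relative to $i$: if $j \neq i, i+1$ then $T_i X_j = X_j T_i$ by \cref{commute}, $s_i$ fixes $X_j$, and $^{s_i}$ fixes $X_j$, so the identity passes through cleanly; the interesting cases are $j = i$ and $j = i+1$.

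For $j = i$, I would use \cref{inverse} in the form $T_i X_i = X_{i+1} T_i^{-1}$, then apply the skein relation \cref{skein}: $T_i X_i = X_{i+1}(T_i - z t_{i,i+1}) = X_{i+1} T_i - z X_{i+1} t_{i,i+1}$. Since $s_i(X_i) = X_{i+1}$ and $\Delta_i(X_i) = \frac{X_i - X_{i+1}}{1 - X_i X_{i+1}^{-1}} = -X_{i+1}$, and using that $t_{i,i+1}$ commutes with $P_n$ (it lies in $A^{\otimes n}$, which commutes with the $X$'s by \cref{pna}), this is exactly $s_i(X_i) T_i + z t_{i,i+1}\Delta_i(X_i)$. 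For $j = i+1$, I would instead write $T_i X_{i+1} = T_i T_i X_i T_i = (z t_{i,i+1} T_i + 1) X_i T_i$ using \cref{quadratic,inverse}, then expand and simplify using $T_i X_i = X_{i+1} T_i - z X_{i+1} t_{i,i+1}$ from the previous case; comparing with $s_i(X_{i+1}) T_i + z t_{i,i+1} \Delta_i(X_{i+1}) = X_i T_i + z t_{i,i+1}\cdot \frac{X_{i+1} - X_i}{1 - X_i X_{i+1}^{-1}} = X_i T_i + z t_{i,i+1} X_i$ should match.

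Once \cref{demazure} holds on the two generating subalgebras $A^{\otimes n}$ and $P_n$, I would combine them: writing a general $f \in P_n(A)$ as a sum of products $\ba p$ with $\ba \in A^{\otimes n}$, $p \in P_n$, I compute $T_i (\ba p) = s_i(\ba) T_i p = s_i(\ba)\big(s_i(p) T_i + z t_{i,i+1}\Delta_i(p)\big) = s_i(\ba p) T_i + z s_i(\ba) t_{i,i+1}\Delta_i(p)$. Using \cref{teleport} to move $s_i(\ba) = s_{i,i+1}(\ba)$ through $t_{i,i+1}$ (so $s_i(\ba) t_{i,i+1} = t_{i,i+1}\, \ba$) together with the Leibniz rule \cref{leibniz} in the form $\Delta_i(\ba p) = \ba\Delta_i(p)$ (valid since $^{s_i}\ba = \ba$, by \cref{sneak}), the correction term becomes $z t_{i,i+1}\Delta_i(\ba p)$, as desired. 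The main obstacle is the $j = i+1$ computation: it requires careful bookkeeping of the noncommutative rearrangements, but there are no conceptual difficulties, since every step is an application of a defining relation or of the already-proven skein relation \cref{skein}.
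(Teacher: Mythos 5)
Your approach is essentially the paper's: verify \cref{demazure} on the generators $X_j^{\pm 1}$, extend multiplicatively over $P_n$ via the Leibniz rule \cref{leibniz} together with \cref{teleport}, and then absorb the $A^{\otimes n}$ factor using \cref{TF}, \cref{teleport}, and \cref{sneak}, exactly as in the paper. One slip to correct: $\Delta_i(X_{i+1}) = \frac{X_{i+1}-X_i}{1-X_iX_{i+1}^{-1}} = X_{i+1}$, not $X_i$, and your own $j=i+1$ computation yields $T_iX_{i+1} = X_iT_i + zt_{i,i+1}X_{i+1}$, which does match $s_i(X_{i+1})T_i + zt_{i,i+1}\Delta_i(X_{i+1})$ once the fraction is simplified correctly, so the case closes. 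Also note that since $P_n$ consists of Laurent polynomials, the base cases $f = X_i^{-1}$ and $f = X_{i+1}^{-1}$ must be checked too (they follow from \cref{skein} and \cref{inverse} just like the positive powers).
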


\begin{proof}
  It is straightforward to verify by direct computation that \cref{demazure} holds for $f = X_j^{\pm 1}$, $1 \le j \le n$.  For example,
  \[
    T_i X_i
    \stackrel{\cref{inverse}}{=} X_{i+1} T_i^{-1}
    \stackrel{\cref{skein}}{=}  X_{i+1} (T_i-zt_{i,i+1})
    = X_{i+1}T_i -z t_{i,i+1} X_{i+1}
    = X_{i+1}T_i+zt_{i,i+1}\Delta_i(X_i).
  \]
  \details{
    For $j \ne i,i+1$, we have
    \[
      T_i X_j
      \stackrel{\cref{commute}}{=} X_j T_i
      = X_j T_i + z t_{i,i+1} \Delta_i(X_j)
      \quad \text{and} \quad
      T_i X_j^{-1}
      \stackrel{\cref{commute}}{=} X_j^{-1} T_i
      = X_j^{-1} T_i + z t_{i,i+1} \Delta_i (X_j^{-1}).
    \]
    We also have
    \begin{gather*}
      T_i X_{i+1}
      \stackrel{\cref{skein}}{=}(T_i^{-1}+zt_{i,i+1})X_{i+1}
      \stackrel{\cref{inverse}}{=} X_{i}T_i+zt_{i,i+1}X_{i+1}
      = X_i T_i + z t_{i,i+1} \Delta_i(X_{i+1}),
      \\
      T_i X_i^{-1}
      \stackrel{\cref{skein}}{=} (T_i^{-1}+zt_{i,i+1}) X_i^{-1}
      \stackrel{\cref{inverse}}{=} X_{i+1}^{-1}T_i+zt_{i,i+1}X_i^{-1}
      = X_{i+1}^{-1} T_i+zt_{i,i+1}\Delta_i(X_i^{-1}),
      \\
      T_iX_{i+1}^{-1}
      \stackrel{\cref{inverse}}{=} X_i^{-1}T_i^{-1}
      \stackrel{\cref{skein}}{=} X_i^{-1}(T_i-zt_{i,i+1})
      = X_i^{-1}T_i-zt_{i,i+1}X_i^{-1}
      = X_i^{-1}T_i+zt_{i,i+1}\Delta_i(X_{i+1}^{-1}).
    \end{gather*}
  }
  Then, supposing the result holds for $p,q \in P_n$, we have
  \begin{multline*}
    T_i(pq)
    = s_i(p) T_i q + zt_{i,i+1} \Delta_i(p) q \\
    \stackrel{\cref{teleport}}{=} s_i(pq) T_i + z t_{i,i+1} s_i(p) \Delta_i(q) + zt_{i,i+1} \Delta_i(p) q
    \stackrel{\cref{leibniz}}{=} s_i(pq) T_i + z t_{i,i+1} \Delta_i(pq).
  \end{multline*}
  Since both sides of \cref{demazure} are $\kk$-linear in $f$ and the $X_j^{\pm 1}$ generate $P_n$ as a $\kk$-algebra, the result holds for all $p \in P_n$.  Now, for $\ba \in A^{\otimes n}$ and $p \in P_n$, we have
  \[
    T_i \ba p
    = s_i(\ba) T_i p
    = s_i(\ba) \left( s_i(p) T_i + z t_{i,i+1} \Delta_i(p) \right)
    \stackrel{\cref{teleport}}{=} s_i(\ba p) T_i + z t_{i,i+1} \Delta_i ( \ba p ).
  \]
  This completes the proof.
\end{proof}

\begin{lem} \label{snowy}
  Suppose $f \in P_n(A)$ and $w \in S_n$.  In $H_n^\aff(A,z)$, we have
  \begin{equation} \label{twf}
    T_w f = w(f) T_w + \sum_{u<w} f_u T_u,\qquad
    f T_w = T_w w^{-1}(f) + \sum_{u<w} T_u f'_u,
  \end{equation}
  for some $f_u,f_u' \in P_n(A)$.  Here $<$ denotes the strong Bruhat order on $S_n$.
\end{lem}

\begin{proof}
  This follows from \cref{demazure} by induction on the length of $w$.
\end{proof}

\begin{lem}
  For $k,\ell \in \Z$, $\ell > 0$, we have
  \begin{equation} \label{deltamon}
    \Delta_i \left( X_i^k X_{i+1}^{k+\ell} \right)
    = \sum_{r=0}^{\ell-1} X_i^{k+r} X_{i+1}^{k+\ell-r}
    = - \Delta_i \left( X_i^{k+\ell} X_{i+1}^k \right)
  \end{equation}
\end{lem}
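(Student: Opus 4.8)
The plan is to compute $\Delta_i$ of the given monomial directly from its definition $\Delta_i(f) = (f - {}^{s_i}f)/(1 - X_iX_{i+1}^{-1})$, reduce the result to a standard geometric-series identity inside the Laurent polynomial ring $P_n$, and then read off the second equality almost for free from the symmetry relation $\Delta_i({}^{s_i}f) = -\Delta_i(f)$ recorded in \cref{Delts}.

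First I would observe that the $A^{\otimes n}$-linear action of $s_i$ on $P_n$ merely interchanges $X_i$ and $X_{i+1}$, so that ${}^{s_i}(X_i^k X_{i+1}^{k+\ell}) = X_i^{k+\ell} X_{i+1}^k$. Substituting into the definition gives
\[
  \Delta_i \big( X_i^k X_{i+1}^{k+\ell} \big)
  = \frac{X_i^k X_{i+1}^{k+\ell} - X_i^{k+\ell} X_{i+1}^k}{1 - X_i X_{i+1}^{-1}}.
\]
Since $X_i$ and $X_{i+1}$ are units in $P_n$, I would factor the unit $X_i^k X_{i+1}^{k+1}$ out of the numerator and rewrite the denominator as $(X_{i+1} - X_i) X_{i+1}^{-1}$, which collapses the whole expression to $X_i^k X_{i+1}^{k+1} \cdot \frac{X_{i+1}^\ell - X_i^\ell}{X_{i+1} - X_i}$. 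It matters here that $\ell$ is a \emph{positive integer} while $k$ is allowed to be an arbitrary integer; the possibly negative powers are harmless precisely because we are working in the Laurent polynomial ring and not in a polynomial ring.

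Next I would apply the identity $\frac{a^\ell - b^\ell}{a - b} = \sum_{j=0}^{\ell-1} a^j b^{\ell-1-j}$, valid in any commutative ring for $\ell \ge 1$, with $a = X_{i+1}$ and $b = X_i$. This yields $\Delta_i(X_i^k X_{i+1}^{k+\ell}) = \sum_{j=0}^{\ell-1} X_i^{k+\ell-1-j} X_{i+1}^{k+1+j}$, and the substitution $r = \ell-1-j$ turns this sum into $\sum_{r=0}^{\ell-1} X_i^{k+r} X_{i+1}^{k+\ell-r}$, which is the claimed middle expression.

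Finally, for the last equality I would simply note that ${}^{s_i}(X_i^{k+\ell} X_{i+1}^k) = X_i^k X_{i+1}^{k+\ell}$ and invoke $\Delta_i({}^{s_i} f) = -\Delta_i(f)$ from \cref{Delts}; equivalently, one sees it by inspection, since interchanging $k$ and $k+\ell$ in the numerator above only flips the overall sign. I do not anticipate any genuine obstacle: the single point requiring a moment's care is that the exponents $k+r$ may be negative, so the computation genuinely takes place in $P_n = \kk[X_1^{\pm 1}, \dotsc, X_n^{\pm 1}]$, but every division carried out is either by a unit or is the standard polynomial division $(a^\ell - b^\ell)/(a-b)$.
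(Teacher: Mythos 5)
Your proof is correct and follows essentially the same route as the paper: both reduce to the factorization $X_i^k X_{i+1}^{k+1}\,\frac{X_{i+1}^\ell - X_i^\ell}{X_{i+1}-X_i}$ and the geometric-series expansion, and both deduce the second equality from the identity $\Delta_i({}^{s_i}f) = -\Delta_i(f)$ of \cref{Delts}. The only cosmetic difference is that the paper extracts the $s_i$-invariant factor $X_i^kX_{i+1}^k$ up front via \cref{sneak} before applying $\Delta_i$, whereas you factor it out of the numerator after writing out the definition.
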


\begin{proof}
  We have
  \[
    \Delta_i \left( X_i^k X_{i+1}^{k+\ell} \right)
    \stackrel{\cref{sneak}}{=} X_i^k X_{i+1}^k \Delta_i \left( X_{i+1}^\ell \right)
    = X_i^k X_{i+1}^{k+1} \frac{X_{i+1}^\ell - X_i^\ell}{X_{i+1}-X_i}
    = \sum_{r=0}^{\ell-1} X_i^{k+r} X_{i+1}^{k+\ell-r}.
  \]
  The second equation in \cref{deltamon} then follows from the second equation in \cref{Delts}.
\end{proof}

\begin{cor}\label{polyinv}
  For $1 \le i \le n-1$, we have $\Delta_i (\kk[X_1,\dotsc,X_n]) \subseteq \kk[X_1,\dotsc,X_n]$.
\end{cor}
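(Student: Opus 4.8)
The plan is to reduce to monomials and then quote \cref{deltamon}. Since $\Delta_i$ is $\kk$-linear, it suffices to prove that $\Delta_i$ sends each monomial $X_1^{a_1} \dotsm X_n^{a_n}$ with $a_1,\dotsc,a_n \in \N$ into $\kk[X_1,\dotsc,X_n]$.

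First I would peel off the variables fixed by $s_i$: setting $p = \prod_{j \ne i,i+1} X_j^{a_j}$, we have ${}^{s_i}p = p$, so \cref{sneak} gives $\Delta_i\big(X_1^{a_1}\dotsm X_n^{a_n}\big) = p\,\Delta_i\big(X_i^{a_i} X_{i+1}^{a_{i+1}}\big)$. Since $\kk[X_1,\dotsc,X_n]$ is closed under multiplication, this reduces the statement to showing $\Delta_i\big(X_i^{a_i}X_{i+1}^{a_{i+1}}\big) \in \kk[X_i,X_{i+1}]$ for $a_i,a_{i+1}\in\N$.

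Now I would split into three cases. If $a_i = a_{i+1}$, then $X_i^{a_i}X_{i+1}^{a_{i+1}}$ is ${}^{s_i}$-invariant, so $\Delta_i$ annihilates it. If $a_i < a_{i+1}$, I would put $k = a_i$ and $\ell = a_{i+1}-a_i > 0$, so that $X_i^{a_i}X_{i+1}^{a_{i+1}} = X_i^k X_{i+1}^{k+\ell}$, and apply the first equality of \cref{deltamon} to get $\sum_{r=0}^{\ell-1} X_i^{k+r}X_{i+1}^{k+\ell-r}$; because $k = a_i \ge 0$, every exponent here is nonnegative, so this is a genuine polynomial. The case $a_i > a_{i+1}$ is handled symmetrically, with $k = a_{i+1}$ and $\ell = a_i - a_{i+1} > 0$, using the second equality of \cref{deltamon}.

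I do not expect any real obstacle: \cref{deltamon} was arranged precisely so that this is immediate, and the only point needing (minimal) attention is to check that, since the original exponents are nonnegative, the exponents produced by \cref{deltamon} never become negative — so we land in $\kk[X_1,\dotsc,X_n]$ and not merely in $P_n$.
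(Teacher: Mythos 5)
Your proposal is correct and is essentially the paper's own argument, just written out in more detail: the paper's proof also reduces to monomials via \cref{sneak} (factoring out the variables untouched by $s_i$) and then applies \cref{deltamon}. The case analysis and the check that exponents stay nonnegative are exactly the routine verifications the paper leaves implicit.
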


\begin{proof}
  This follows from \cref{deltamon} and the fact that $\Delta_i$ is linear in the $X_j$, $j \ne i,i+1$, by \cref{sneak}.
\end{proof}

\subsection{Basis Theorem}

Our next goal is to give explicit bases for the quantum affine wreath algebra $H_n^\aff(A,z)$.  We do this by constructing a natural faithful representation.

\begin{lem}
  For $1\leq i\leq n-1$ and $\ell>0$,
  \begin{equation}\label{TiXd}
    T_iX_i^\ell T_i=X_{i+1}^\ell-zt_{i,i+1}\sum_{k=1}^{\ell-1}X_i^k X_{i+1}^{\ell-k}T_i
  \end{equation}
\end{lem}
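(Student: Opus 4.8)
The plan is to apply the passthrough formula \cref{demazure} with $f = X_i^\ell$, multiply on the right by $T_i$, and then use the quadratic relation \cref{quadratic} to rewrite $T_i^2$. A single cancellation produces the stated identity.

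\textbf{Step 1: compute $\Delta_i(X_i^\ell)$.} Taking $k=0$ in the second equality of \cref{deltamon} (or computing directly from the definition and clearing the denominator by $X_{i+1}$) gives
\[
  \Delta_i(X_i^\ell) = -\sum_{r=0}^{\ell-1} X_i^r X_{i+1}^{\ell-r}.
\]
Then \cref{demazure} yields
\[
  T_i X_i^\ell = X_{i+1}^\ell T_i - z t_{i,i+1} \sum_{r=0}^{\ell-1} X_i^r X_{i+1}^{\ell-r}.
\]

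\textbf{Step 2: multiply by $T_i$ and simplify.} Multiplying the last equation on the right by $T_i$, using $T_i^2 = z t_{i,i+1} T_i + 1$ from \cref{quadratic}, and using that $t_{i,i+1} \in A^{\otimes n}$ commutes with every $X_j$ by \cref{pna}, we get
\[
  T_i X_i^\ell T_i = X_{i+1}^\ell + z t_{i,i+1} X_{i+1}^\ell T_i - z t_{i,i+1} \sum_{r=0}^{\ell-1} X_i^r X_{i+1}^{\ell-r} T_i.
\]
The $r=0$ summand of the last sum is exactly $X_{i+1}^\ell T_i$, so it cancels against the term $z t_{i,i+1} X_{i+1}^\ell T_i$, leaving
\[
  T_i X_i^\ell T_i = X_{i+1}^\ell - z t_{i,i+1} \sum_{r=1}^{\ell-1} X_i^r X_{i+1}^{\ell-r} T_i,
\]
which is \cref{TiXd} after renaming $r$ to $k$.

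\textbf{Remarks on difficulty and alternatives.} There is essentially no obstacle here; the only care needed is getting the sign and index range in $\Delta_i(X_i^\ell)$ right, i.e.\ applying \cref{deltamon} with the correct specialization. An equivalent route, if one prefers to avoid the Demazure operator, is induction on $\ell$: the base case $\ell=1$ is the relation \cref{inverse}, and for the inductive step one writes $T_i X_i^\ell T_i = (T_i X_i) X_i^{\ell-1} T_i = X_{i+1} T_i^{-1} X_i^{\ell-1} T_i$ using \cref{inverse}, then substitutes $T_i^{-1} = T_i - z t_{i,i+1}$ from \cref{skein} and applies the inductive hypothesis to $T_i X_i^{\ell-1} T_i$, again moving $t_{i,i+1}$ freely past the $X_j$'s via \cref{pna}.
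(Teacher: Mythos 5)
Your proof is correct and follows essentially the same route as the paper: apply \cref{demazure} to $X_i^\ell$, evaluate $\Delta_i(X_i^\ell)$ via \cref{deltamon}, multiply on the right by $T_i$, and use \cref{quadratic} so that the $r=0$ term cancels. No issues.
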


\begin{proof}
  For $1\leq i\leq n-1$ and $\ell>0$, we have
  \begin{multline*}
    T_i X_i^\ell T_i
    \stackrel{\cref{demazure}}{=} \left( X_{i+1}^\ell T_i+zt_{i,i+1} \Delta_i \left( X_i^\ell \right) \right) T_i
    \\
    \stackrel{\cref{deltamon}}{=}X_{i+1}^\ell T_i^2-zt_{i,i+1}\sum_{k=0}^{\ell-1}X_i^k X_{i+1}^{\ell-k} T_i
    \stackrel{\cref{quadratic}}{=} X_{i+1}^\ell-zt_{i,i+1}\sum_{k=1}^{\ell-1}X_i^k X_{i+1}^{\ell-k}T_i. \qedhere
  \end{multline*}
\end{proof}

\begin{prop}\label{prop:action}
  Let $\mathcal{H}$ be the free $\kk$-module with basis $\{ T_w : w\in S_n\}$, and $V=P_n(A)\otimes \mathcal{H}$ a tensor product of $\kk$-modules. Then $V$ is an $H_n^\aff(A,z)$-module, with the action given by
  \begin{align*}
    f \cdot (g \otimes T_w) &= fg \otimes T_w,
    \\
    T_i \cdot (f \otimes T_w) &=
    \begin{cases}
      s_i(f) \otimes T_{s_iw} + z t_{i,i+1} \Delta_i(f)\otimes T_w & \text{ if }\ell(s_i w)>\ell(w), \\
      s_i(f)\otimes T_{s_iw} + z t_{i,i+1} X_{i+1}^{-1}\Delta_i(X_{i+1}f) \otimes T_w & \text{ if } \ell(s_iw)<\ell (w),
    \end{cases}
  \end{align*}
  for $f,g \in P_n(A)$, $w\in S_n$.  Here $\ell$ is the length function on $S_n$.
\end{prop}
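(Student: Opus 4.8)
The plan is to verify that the proposed formulas for the action of the generators $f \in P_n(A)$ and $T_i$ respect all the defining relations of $H_n^\aff(A,z)$. Since $V = P_n(A) \otimes \mathcal{H}$ as a $\kk$-module, it is clear that $P_n(A)$ acts by left multiplication in the first factor, so the relations internal to $P_n(A)$ (including \cref{pna}, which just says $A^{\otimes n}$ and $P_n$ commute inside $P_n(A)$) hold automatically. The bulk of the work is checking the relations involving the $T_i$: the far-commutativity \cref{farcomm}, the braid relation \cref{braid}, the quadratic relation \cref{quadratic}, the relation \cref{TF} between $T_i$ and $A^{\otimes n}$, and the relations \cref{commute,inverse} between $T_i$ and the $X_j$.

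First I would dispose of \cref{TF} and the polynomial-commutation relations: the formula for $T_i \cdot (f \otimes T_w)$ is built out of $s_i$, $\Delta_i$, and multiplication by $X_{i+1}^{\pm 1}$ in the first tensor factor, and one checks directly using \cref{Delter} (the equations $s_i(\Delta_j(f)) = \Delta_j(s_i(f))$ etc.) that applying $T_i$ and then multiplying by a polynomial $p$ with ${}^{s_i}p = p$ (e.g.\ $p = X_j^{\pm 1}$ for $j \ne i, i+1$, using \cref{sneak}) agree, giving \cref{commute}; and that conjugating the action by an element $\ba \in A^{\otimes n}$ produces $s_i(\ba)$, giving \cref{TF}. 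The key identity here is that $X_{i+1}^{-1}\Delta_i(X_{i+1} f)$ is an ``$s_i$-twisted Demazure operator'' behaving like $\Delta_i$ under the relevant symmetries, so these checks parallel the lemma on $\Delta_i$ already proved.

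Next I would handle the quadratic relation \cref{quadratic}: compute $T_i^2 \cdot (f \otimes T_w)$ in the two cases $\ell(s_i w) > \ell(w)$ and $\ell(s_iw) < \ell(w)$. In the first case, applying $T_i$ once lands partly in the $T_{s_iw}$ component with $\ell(s_i(s_iw)) = \ell(w) < \ell(s_iw)$, so the second branch of the formula is used on the re-application; the $T_{s_iw}$ terms must cancel and the coefficient of $f \otimes T_w$ must collapse — via \cref{Delt1} ($\Delta_i^2 = \Delta_i$), \cref{Delts}, and the identity $s_i^2 = \id$ — to exactly $z t_{i,i+1}\Delta_i(f) + f$, matching $(z t_{i,i+1} T_i + 1)\cdot(f\otimes T_w)$ since $z t_{i,i+1}$ acts as multiplication and commutes with $T_i$ (as noted after \cref{skein}). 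This is essentially a module-level repackaging of the computation already done in \cref{TiXd} and \cref{lem:passthrough}. The relation \cref{inverse}, $T_iX_iT_i = X_{i+1}$, should then drop out similarly from the case analysis and \cref{deltamon}.

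The main obstacle will be the braid relation \cref{braid}: one must show $T_iT_{i+1}T_i$ and $T_{i+1}T_iT_{i+1}$ act identically on every $f \otimes T_w$. Expanding each side produces several terms indexed by which branch of the $T$-formula is invoked at each of the three steps, and the branches depend on the chamber of $w$ relative to the hyperplanes for $s_i, s_{i+1}, s_is_{i+1}s_i$; so the cleanest route is to break into the six cosets of $\langle s_i, s_{i+1}\rangle$ determined by $\ell(\cdot w)$ (equivalently, the six elements of $S_3$ acting on the left), and in each case track the $\mathcal{H}$-components — which will match because $T_w$ was defined independently of reduced word — and the $P_n(A)$-coefficients, which reduce to the braid relation \cref{Deltbraid} for the Demazure operators together with the $s_i$-twisted variants and the polynomial identities \cref{Delts,Delter,deltamon}. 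I would organize this by first proving the ``coefficient braid identity'' as a standalone statement about the operators $f \mapsto s_i(f)$, $f \mapsto \Delta_i(f)$, $f \mapsto X_{i+1}^{-1}\Delta_i(X_{i+1}f)$ on $P_n(A)$, so that the module check reduces to bookkeeping; this mirrors how \cref{lem:passthrough} and the preceding lemma isolate the hard content. Granting faithfulness (to be established afterward via \cref{affbasis}), the relations \cref{farcomm} and those not yet mentioned are routine.
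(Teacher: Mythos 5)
Your overall strategy is the paper's: check the defining relations of $H_n^\aff(A,z)$ as operators on $V$, with \cref{pna} immediate and \cref{TF,commute,inverse,quadratic} verified by direct computation using \cref{leibniz,sneak,Delts,Delt1,Delter}. Where you genuinely diverge is the braid relation \cref{braid}. You propose expanding $T_iT_{i+1}T_i$ and $T_{i+1}T_iT_{i+1}$ on a general $f\otimes T_w$, splitting into the six chambers of $w$ relative to $\langle s_i,s_{i+1}\rangle$ and matching coefficients via a standalone braid identity for the operators $s_i$, $\Delta_i$, $X_{i+1}^{-1}\Delta_i(X_{i+1}\,\cdot\,)$. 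The paper instead first proves the operator identity $(T_iT_{i+1}T_i-T_{i+1}T_iT_{i+1})X_j=X_{s_{i,i+2}(j)}(T_iT_{i+1}T_i-T_{i+1}T_iT_{i+1})$ on $V$ (using only the already-verified relations, plus the observation that \cref{demazure} is then available in $\End(V)$), which together with \cref{TF} lets one strip off $f$ entirely; the braid relation then only needs to be checked on $1\otimes T_w$, where the action collapses to \cref{meatballs} and a four-case analysis on $\ell(s_is_{i+1}s_iw)-\ell(w)\in\{\pm1,\pm3\}$ finishes the job using \cref{quadratic,teleport}. Your route is workable in principle, but it is substantially heavier: with general $f$ each side produces up to eight terms per case, and the ``coefficient identity'' is not purely a statement about Demazure-type operators on $P_n(A)$ — the coefficients also carry products of teleporters that get superpermuted at each step, so you must additionally invoke identities such as $t_{i+1,i+2}t_{i,i+1}=t_{i,i+1}t_{i,i+2}$ coming from \cref{teleport}. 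The paper's reduction buys you exactly the elimination of this double bookkeeping.

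Two cautions. First, your closing remark ``granting faithfulness (to be established afterward via \cref{affbasis})'' is circular as stated: faithfulness presupposes that $V$ is a module, so it cannot be used to dispense with any relation, including \cref{farcomm}; that relation must be (and easily is) checked directly, as the paper does with its $\varepsilon,\tau\in\{0,1\}$ computation. Second, be careful about importing algebra identities such as \cref{TiXd} or \cref{lem:passthrough} into $\End(V)$: they are theorems about $H_n^\aff(A,z)$ and are only available as operator identities once the relations their proofs use have themselves been verified on $V$ (the paper makes this ordering explicit when it notes that \cref{demazure} holds in $\End(V)$ because its proof depends only on \cref{inverse,commute,quadratic}). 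Your verification of \cref{quadratic} should therefore be the direct computation with \cref{leibniz,Delts,Delt1}, not an appeal to \cref{lem:passthrough}.
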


\begin{proof}
  We need to check that the action satisfies the defining relations of $H_n^\aff(A,z)$.  Throughout this proof, $\ba \in A^{\otimes n}$, $f,g \in P_n(A)$, $w \in S_n$, $1 \le i \le n-1$, and $1 \le j \le n$.  The relation \cref{pna} is clearly satisfied.

  \medskip

  \noindent \emph{Relation~\cref{TF}}: We have
  \begin{align*}
    T_i \cdot (\ba \cdot (f \otimes T_w))
    &= T_i \cdot (\ba f \otimes T_w)
    \\ &=
    \begin{cases}
      s_i(\ba f) \otimes T_{s_iw} + z t_{i,i+1} \Delta_i(\ba f) \otimes T_w & \text{ if } \ell(s_i w)>\ell(w), \\
      s_i(\ba f) \otimes T_{s_iw} + z t_{i,i+1} X_{i+1}^{-1} \Delta_i (X_{i+1} \ba f) \otimes T_w & \text{ if } \ell(s_iw)<\ell (w)
    \end{cases}
    \\
    &\stackrel{\mathclap{\cref{teleport}}}{=}\ s_i(\ba)\cdot (T_i \cdot (f \otimes T_w)).
  \end{align*}

  \medskip

  \noindent \emph{Relation~\cref{commute}}:  If $j \ne i,i+1$, then
  \begin{align*}
    T_i \cdot ( X_j \cdot (f \otimes T_w))
    &= T_i \cdot (X_j f \otimes T_w) \\
    &=
    \begin{cases}
      s_i(X_jf) \otimes T_{s_iw} + z t_{i,i+1} \Delta_i(X_jf) \otimes T_w
      & \text{ if } \ell(s_i w) > \ell(w), \\
      s_i(X_jf) \otimes T_{s_iw} + z t_{i,i+1} X_{i+1}^{-1} \Delta_i(X_{i+1}X_jf) \otimes T_w
      & \text{ if } \ell(s_iw) <\ell (w)
    \end{cases}
    \\
    &\stackrel{\mathclap{\cref{sneak}}}{=}\ X_j \cdot (T_i \cdot (f \otimes T_w)).
  \end{align*}

  \medskip

  \noindent \emph{Relation~\cref{inverse}}:  First suppose that $\ell(s_iw)>\ell(w)$.  Then
  \begin{align*}
    T_i \cdot (X_i\cdot( f \otimes T_w))
    &= T_i \cdot (X_if \otimes T_w) \\
    &= s_i(X_if) \otimes T_{s_iw} + z t_{i,i+1} \Delta_i(X_if) \otimes T_w \\
    &\stackrel{\mathclap{\cref{leibniz}}}{=}\ X_{i+1} s_i(f) \otimes T_{s_iw} + z t_{i,i+1} \big( \Delta_i(X_i) f + X_{i+1} \Delta_i(f) \big) \otimes T_w \\
    &= X_{i+1} s_i(f) \otimes T_{s_i w} + z t_{i,i+1} X_{i+1} (-f + \Delta_i(f)) \otimes T_w \\
    &= X_{i+1} \cdot \big( ( T_i-zt_{i,i+1})\cdot (f \otimes T_w) \big) \\
    &\stackrel{\mathclap{\cref{skein}}}{=}\ X_{i+1} \cdot (T_i^{-1} \cdot (f \otimes T_w)).
  \end{align*}
  On the other hand, if $\ell(s_iw)<\ell(w)$, then
  \begin{align*}
    T_i\cdot (X_i\cdot( f \otimes T_w))
    &= T_i \cdot (X_if \otimes T_w) \\
    &= s_i(X_if) \otimes T_{s_iw} + z t_{i,i+1} X_{i+1}^{-1} \Delta_i(X_{i+1}X_if) \otimes T_w \\
    &\stackrel{\mathclap{\cref{sneak}}}{=}\ X_{i+1}s_i(f) \otimes T_{s_iw} + z t_{i,i+1} X_i \Delta_i(f) \otimes T_w \\
    &\stackrel{\mathclap{\cref{leibniz}}}{=}\ X_{i+1}s_i(f) \otimes T_{s_iw} + z t_{i,i+1} \big( \Delta_i(X_{i+1} f) - X_{i+1} f \big) \otimes T_w \\
    &= X_{i+1} \cdot \big( (T_i - z t_{i,i+1}) \cdot (f \otimes T_w) \big) \\
    &\stackrel{\mathclap{\cref{skein}}}{=}\ X_{i+1} \cdot (T_i^{-1} \cdot (f \otimes T_w)).
  \end{align*}

  \medskip

  \noindent \emph{Relation~\cref{quadratic}}:  First suppose $\ell(s_iw)>\ell(w)$, so that $\ell(s_i(s_iw))=\ell(w)<\ell(s_iw)$.  Then, using the fact that $s_i(t_{i,i+1}) = t_{i,i+1}$, we have
  \begin{align*}
    T_i &\cdot (T_i \cdot (f \otimes T_w)) \\
    &= T_i \cdot \big( s_i(f) \otimes T_{s_i w} + z t_{i,i+1} \Delta_i(f) \otimes T_w \big) \\
    &= f \otimes T_w + z t_{i,i+1} X_{i+1}^{-1} \Delta_i \big( X_{i+1} s_i(f) \big) \otimes T_{s_iw} + z t_{i,i+1} s_i \big( \Delta_i(f) \big) \otimes T_{s_i w} + z^2 t_{i,i+1}^2 \Delta_i^2(f) \otimes T_w \\
    &\stackrel{\mathclap{\cref{leibniz}}}{=}\ f \otimes T_w + z t_{i,i+1} \Big( s_i(f) + X_i X_{i+1}^{-1} \Delta_i \big( s_i(f) \big) \Big) \otimes T_{s_iw} + z t_{i,i+1} s_i \big( \Delta_i(f) \big) \otimes T_{s_i w} + z^2 t_{i,i+1}^2 \Delta_i^2(f) \otimes T_w \\
    &\stackrel[\mathclap{\cref{Delt1}}]{\mathclap{\cref{Delter}}}{=}\ f \otimes T_w + z t_{i,i+1} s_i(f) \otimes T_{s_iw} + z^2 t_{i,i+1}^2 \Delta_i(f) \otimes T_w \\
    &= (1 + zt_{i,i+1}T_i) \cdot (f \otimes T_w).
  \end{align*}
  The case $\ell(s_iw) < \ell(w)$ is similar.
  \details{
    If $\ell(s_iw) < \ell(w)$, then $\ell(s_i(s_iw))=\ell(w)>\ell(s_iw)$.  Thus
    \begin{align*}
      T_i &\cdot (T_i \cdot (f \otimes T_w)) \\
      &= T_i \cdot \big( s_i(f) \otimes T_{s_i w} + z t_{i,i+1} X_{i+1}^{-1} \Delta_i(X_{i+1}f) \otimes T_w \big) \\
      &= f \otimes T_w + z t_{i,i+1} \Delta_i \big( s_i(f) \big) \otimes T_{s_i w} + z t_{i,i+1} s_i \big( X_{i+1}^{-1} \Delta_i (X_{i+1} f) \big) \otimes T_{s_iw} + z^2 t_{i,i+1}^2 X_{i+1}^{-1} \big( \Delta_i^2 (X_{i+1} f) \big) \otimes T_w \\
      &\stackrel[\mathclap{\cref{Delter}}]{\mathclap{\cref{leibniz}}}{=}\ f \otimes T_w - z t_{i,i+1} X_i^{-1} X_{i+1} s_i \big( \Delta_i (f) \big) \otimes T_{s_i w} + z t_{i,i+1} s_i \big( f + X_i X_{i+1}^{-1} \Delta_i (f) \big) \otimes T_{s_iw} \\
      &\qquad \qquad + z^2 t_{i,i+1}^2 X_{i+1}^{-1} \big( \Delta_i^2 (X_{i+1} f) \big) \otimes T_w \\
      &\stackrel{\mathclap{\cref{Delt1}}}{=}\ f \otimes T_w + z t_{i,i+1} s_i(f) \otimes T_{s_iw} + z^2 t_{i,i+1}^2 X_{i+1}^{-1} \big( \Delta_i (X_{i+1} f) \big) \otimes T_w \\
      &= (1 + zt_{i,i+1}T_i) \cdot (f \otimes T_w).
    \end{align*}
  }

  \medskip

  \noindent \emph{Relation~\cref{farcomm}}:  Let $|i-j|>1$, so that $s_is_j=s_js_i$.  In order to handle several cases simultaneously, we use $\varepsilon$ and $\tau$ to denote elements of $\{0,1\}$ here.  Using \cref{Delter,sneak}, we have
  \begin{align*}
    T_i &\cdot \big( T_j \cdot (f \otimes T_w) \big) \\
    &= T_i \cdot \big( s_j(f) \otimes T_{s_jw} + z t_{j,j+1} X_{j+1}^{-\varepsilon} \Delta_j (X_{j+1}^\varepsilon f) \otimes T_w \big) \\
    &= (s_is_j)(f) \otimes T_{s_is_jw} + z t_{i,i+1} X_{i+1}^{-\tau} \Delta_i \big( X_{i+1}^\tau s_j(f) \big) \otimes T_{s_jw} \\
    &\qquad + z t_{j,j+1} s_i \big( X_{j+1}^{-\varepsilon} \Delta_j (X_{j+1}^\varepsilon f) \big) \otimes T_{s_iw} + z^2 t_{i,i+1} t_{j,j+1} X_{i+1}^{-\tau} \Delta_i \big( X_{i+1}^\tau X_{j+1}^{-\varepsilon} \Delta_j (X_{j+1}^\varepsilon) \big) \otimes T_w \\
    &= (s_js_i)(f) \otimes T_{s_js_iw} + z t_{i,i+1} s_j \big( X_{i+1}^{-\tau} \Delta_i ( X_{i+1}^\tau f) \big) \otimes T_{s_jw} \\
    &\qquad + z t_{j,j+1} X_{j+1}^{-\varepsilon} \Delta_j \big( X_{j+1}^\varepsilon s_i(f) \big) \otimes T_{s_iw} + z^2 t_{i,i+1} t_{j,j+1} X_{j+1}^{-\varepsilon} \Delta_j \big( X_{j+1}^\varepsilon X_{i+1}^{-\tau} \Delta_i (X_{i+1}^\tau) \big) \otimes T_w \\
    &= T_j \cdot \big( T_i \cdot (f \otimes T_w) \big).
  \end{align*}

  \medskip

  \noindent \emph{Relation~\cref{braid}}:  Verifying \cref{braid} is the most involved, and it occupies the remainder of the proof.  We first show that for $1\leq i\leq n-2$ and $1\leq j\leq n$ we have
  \begin{equation}\label{braidop1}
    (T_iT_{i+1}T_i-T_{i+1}T_iT_{i+1})X_j=X_{s_{i,i+2}(j)}(T_iT_{i+1}T_i-T_{i+1}T_iT_{i+1})
  \end{equation}
  as operators on $V$.  Clearly \cref{braidop1} holds for $j \neq i,i+1,i+2$ because \cref{commute} holds for the operators $T_i$ and $X_j$ on $V$, which we have already checked.  Notice also that \cref{demazure} holds in $\End(V)$ since the proof of that relation depends only on \cref{inverse,commute,quadratic}, which we have already verified.

  For $j = i$, as operators in $\End(V)$ we have
  \begin{align*}T_iT_{i+1}T_i X_i&= T_iT_{i+1}X_{i+1}(T_i-zt_{i,i+1}) \\
    &= T_iX_{i+2}(T_{i+1}-zt_{i+1,i+2})(T_i-zt_{i,i+1})\\
    &= X_{i+2}T_i(T_{i+1}T_i-zt_{i+1,i+2}T_i-zT_{i+1}t_{i,i+1}+z^2t_{i+1,i+2}t_{i,i+1})\\
    &=X_{i+2} (T_iT_{i+1}T_i-zT_it_{i+1,i+2}T_i-zT_iT_{i+1}t_{i,i+1}+z^2T_it_{i+1,i+2}t_{i,i+1})\\
    &\stackrel{\mathclap{\cref{TF}}}{=}\ X_{i+2} (T_iT_{i+1}T_i-zt_{i,i+2}T_i^2-zt_{i+1,i+2}T_iT_{i+1}+z^2t_{i,i+2}t_{i,i+1}T_i)\\
    &\stackrel{\mathclap{\cref{quadratic}}}{=}\ X_{i+2} (T_iT_{i+1}T_i-zt_{i,i+2}-zt_{i+1,i+2}T_iT_{i+1})
  \end{align*}
  and
  \begin{align*}
    T_{i+1}T_iT_{i+1} X_i
    &= T_{i+1}T_{i}X_{i}T_{i+1} \\
    &= T_{i+1}X_{i+1}(T_i-zt_{i,i+1})T_{i+1}\\
    &= X_{i+2}(T_{i+1}-zt_{i+1,i+2})(T_iT_{i+1}-zt_{i,i+1}T_{i+1})\\
    &= X_{i+2}(T_{i+1}T_iT_{i+1} - zT_{i+1}t_{i,i+1}T_{i+1} - zt_{i+1,i+2}T_iT_{i+1} + z^2t_{i+1,i+2} t_{i,i+1 }T_{i+1})\\
    &\stackrel{\mathclap{\cref{TF}}}{=}\ X_{i+2}(T_{i+1}T_iT_{i+1} - zt_{i,i+2}T_{i+1}^2 - zt_{i+1,i+2} T_i T_{i+1} + z^2 t_{i+1,i+2} t_{i,i+1} T_{i+1})\\
    &\stackrel{\mathclap{\cref{quadratic}}}{=}\ X_{i+2}(T_{i+1}T_iT_{i+1} - z^2t_{i,i+2}t_{i+1,i+2}T_{i+1} - zt_{i,i+2} - zt_{i+1,i+2}T_iT_{i+1} + z^2t_{i+1,i+2} t_{i,i+1} T_{i+1})\\
    &\stackrel{\mathclap{\cref{teleport}}}{=}\ X_{i+2}(T_{i+1}T_iT_{i+1}-zt_{i,i+2}-zt_{i+1,i+2}T_iT_{i+1}).
  \end{align*}
  So \cref{braidop1} holds for $j=i$.  The cases $j=i+1,i+2$ are similar.
  \details{
    For $j=i+1$, we have
    \begin{align*}
      T_iT_{i+1}T_i X_{i+1} &= T_iT_{i+1}(X_iT_i+zt_{i,i+1}X_{i+1})\\
      &\stackrel{\mathclap{\cref{TF}}}{=}\ T_iX_iT_{i+1}T_i+zt_{i+1,i+2}T_iT_{i+1}X_{i+1} \\
      &= X_{i+1}(T_iT_{i+1}T_i-zt_{i,i+1}T_{i+1}T_i)+zt_{i+1,i+2}X_{i+2}T_i(T_{i+1}-zt_{i+1,i+2})
    \end{align*}
    and
    \begin{align*}
      T_{i+1}T_iT_{i+1}X_{i+1}&=T_{i+1}T_iX_{i+2}(T_{i+1}-zt_{i+1,i+2})\\
      &= T_{i+1}X_{i+2}T_i(T_{i+1}-zt_{i+1,i+2})\\
      &=(X_{i+1}T_{i+1}+zt_{i+1,i+2}X_{i+2})T_i(T_{i+1}-zt_{i+1,i+2})\\
      &= X_{i+1}T_{i+1}T_i(T_{i+1}-zt_{i+1,i+2})+zt_{i+1,i+2}X_{i+2}T_i(T_{i+1}-zt_{i+1,i+2})\\
      &\stackrel{\mathclap{\cref{TF}}}{=}\ X_{i+1}(T_{i+1}T_iT_{i+1} - zt_{i,i+1}T_{i+1}T_i) + zt_{i+1,i+2} X_{i+2} T_i (T_{i+1}-zt_{i+1,i+2}).
    \end{align*}
    So \cref{braidop1} holds for $j=i+1$.  Finally, for $j=i+2$ we have
    \begin{align*}T_iT_{i+1}T_iX_{i+2}
      &= T_iT_{i+1}X_{i+2}T_i \\
      &= T_iX_{i+1}T_{i+1}T_i+zT_it_{i+1,i+2}X_{i+2}T_i \\
      &\stackrel{\mathclap{\cref{TF}}}{=}\ (X_iT_i+zt_{i,i+1}X_{i+1})T_{i+1}T_i+zt_{i,i+2}X_{i+2}T_i^2 \\
      &\stackrel{\mathclap{\cref{quadratic}}}{=}\  X_iT_iT_{i+1}T_i+zt_{i,i+1}X_{i+1}T_{i+1}T_i+z^2t_{i,i+2}t_{i,i+1}X_{i+2}T_i+zt_{i,i+2}X_{i+2}
    \end{align*}
    and
    \begin{align*}
      T_{i+1}T_iT_{i+1}X_{i+2}
      &= T_{i+1}T_iX_{i+1}T_{i+1}+zT_{i+1}T_it_{i+1,i+2}X_{i+2}\\
      &\stackrel{\mathclap{\cref{TF}}}{=}\ T_{i+1} (X_iT_i+zt_{i,i+1}X_{i+1}) T_{i+1} + z t_{i,i+1} T_{i+1} X_{i+2} T_i \\
      &= T_{i+1}X_iT_iT_{i+1} + zT_{i+1}t_{i,i+1}X_{i+1}T_{i+1} + zt_{i,i+1}(X_{i+1}T_{i+1} + zt_{i+1,i+2}X_{i+2})T_i \\
      &\stackrel[\mathclap{\cref{inverse}}]{\mathclap{\cref{TF}}}{=}\ X_i T_{i+1} T_i T_{i+1} + zt_{i,i+2} X_{i+2} + zt_{i,i+1} X_{i+1} T_{i+1}T_i + z^2t_{i,i+1} t_{i+1,i+2} X_{i+2}T_i \\
      &\stackrel{\mathclap{\cref{teleport}}}{=}\ X_i T_{i+1} T_i T_{i+1} + zt_{i,i+2} X_{i+2} + zt_{i,i+1} X_{i+1} T_{i+1} T_i + z^2 t_{i,i+2} t_{i,i+1} X_{i+2} T_i.
    \end{align*}
  }
  Notice that \cref{braidop1} also implies
  \begin{equation}\label{braidop2}
    X_{s_{i,i+2}(j)}^{-1}(T_iT_{i+1}T_i-T_{i+1}T_iT_{i+1})=(T_iT_{i+1}T_i-T_{i+1}T_iT_{i+1})X_j^{-1}.
  \end{equation}
  Then \cref{braidop1,braidop2,TF} imply that, as operators on $V$,
  \[
    (T_i T_{i+1} T_i - T_{i+1} T_i T_{i+1}) f
    = s_{i,i+2}(f) (T_i T_{i+1} T_i - T_{i+1} T_i T_{i+1})
    \quad \text{for all } f \in P_n(A).
  \]
  Thus, for all $f \in P_n(A)$ and $w \in S_n$, we have
  \[
    (T_i T_{i+1} T_i - T_{i+1} T_i T_{i+1}) \cdot (f \otimes T_w)
    = s_{i,i+2}(f) \cdot \big( (T_i T_{i+1} T_i - T_{i+1} T_i T_{i+1}) \cdot (1 \otimes T_w) \big).
  \]
  Hence, to prove \cref{braid}, it suffices to prove that
  \begin{equation} \label{couch}
    (T_i T_{i+1} T_i) \cdot (1 \otimes T_w)
    = (T_{i+1} T_i T_{i+1}) \cdot (1 \otimes T_w)
    \quad \text{for } w \in S_n.
  \end{equation}
  For the remainder of the proof, to simplify the notation, we write $T_w$ for $1 \otimes T_w$ and we omit $\cdot$ from the notation for the action.  We also adopt the convention that operators are applied in order from right to left.  For example, $T_i T_{i+1} T_i T_w = T_i (T_{i+1} (T_i T_w))$.

  It follows immediately from the definition of the action that we have
  \begin{equation} \label{meatballs}
    T_j T_w =
    \begin{cases}
      T_{s_j w} & \text{if } \ell(s_j w) > \ell(w), \\
      T_{s_j w} + z t_{j,j+1} T_w & \text{if } \ell(s_j w) < \ell(w).
    \end{cases}
  \end{equation}
  We split the proof of \cref{couch} into the following cases:
  \begin{enumerate}
    \item \label{length1} $\ell(s_i s_{i+1} s_i w) = \ell(w) + 3$,
    \item \label{length2} $\ell(s_i s_{i+1} s_i w) = \ell(w) + 1$,
    \item \label{length3} $\ell(s_i s_{i+1} s_i w) = \ell(w) - 1$,
    \item \label{length4} $\ell(s_i s_{i+1} s_i w) = \ell(w) - 3$.
  \end{enumerate}
  In case~\cref{length1}, we have
  \[
    \ell(w) < \ell(s_i w) < \ell(s_{i+1} s_i w) < \ell(s_i s_{i+1} s_i w)
    \quad \text{and} \quad
    \ell(w) < \ell(s_{i+1} w) < \ell(s_i s_{i+1} w) < \ell(s_{i+1} s_i s_{i+1} w).
  \]
  Thus
  \[
    T_i T_{i+1} T_i T_w
    = T_{s_i s_{i+1} s_i w}
    = T_{s_{i+1} s_i s_{i+1} w}
    = T_{i+1} T_i T_{i+1} T_w.
  \]

  In case~\cref{length2}, we have, without loss of generality,
  \[
    \ell(w) > \ell(s_i w) < \ell(s_{i+1} s_i w) < \ell(s_i s_{i+1} s_i w)
    \quad \text{and} \quad
    \ell(w) < \ell(s_{i+1} w) < \ell(s_i s_{i+1} w) > \ell(s_{i+1} s_i s_{i+1} w).
  \]
  (The other possibility is obtained by interchanging $i$ and $i+1$.)  Then we have a reduced word $w = s_i v$ and so
  \[
    T_i T_{i+1} T_i T_w
    = T_i T_{i+1} T_i T_i T_v
    \stackrel{\cref{quadratic}}{=} z T_i T_{i+1} t_{i,i+1} T_i T_v + T_i T_{i+1} T_v
    \stackrel{\cref{teleport}}{=} z t_{i+1,i+2} T_{s_i s_{i+1} s_i v} + T_{s_i s_{i+1} v}
  \]
  and
  \begin{multline*}
    T_{i+1} T_i T_{i+1} T_w
    = T_{i+1} T_{s_i s_{i+1} s_i v}
    = T_{i+1} T_{s_{i+1} s_i s_{i+1} v}
    \\
    = T_{i+1} T_{i+1} T_{s_i s_{i+1} v}
    \stackrel{\cref{quadratic}}{=} z t_{i+1,i+2} T_{s_{i+1} s_i s_{i+1} v} + T_{s_i s_{i+1} v}.
  \end{multline*}
  So \cref{couch} holds.

  In case~\cref{length3}, we have, without loss of generality,
  \[
    \ell(w) > \ell(s_i w) > \ell(s_{i+1} s_i w) < \ell(s_i s_{i+1} s_i w)
    \quad \text{and} \quad
    \ell(w) < \ell(s_{i+1} w) < \ell(s_i s_{i+1} w) > \ell(s_{i+1} s_i s_{i+1} w).
  \]
  (The other possibility is obtained by interchanging $i$ and $i+1$.)  Then we have a reduced word $w = s_i s_{i+1} v$ and so
  \begin{align*}
    T_i T_{i+1} T_i T_w
    &= T_i T_{i+1} T_i T_i T_{i+1} T_{v} \\
    &\stackrel{\mathclap{\cref{quadratic}}}{=}\ z T_i T_{i+1} t_{i,i+1} T_i T_{i+1} T_{v} + T_i T_{i+1} T_{i+1} T_{v} \\
    &\stackrel[\mathclap{\cref{quadratic}}]{\mathclap{\cref{teleport}}}{=}\ z t_{i+1,i+2} T_i  T_{s_{i+1} s_i s_{i+1} v} + z T_i t_{i+1,i+2} T_{i+1} T_{v} + T_{s_iv} \\
    &\stackrel{\mathclap{\cref{teleport}}}{=}\ z t_{i+1,i+2} T_i T_i T_{i+1} T_i T_{v} + z t_{i,i+2} T_{s_i s_{i+1} v} + T_{s_iv} \\
    &\stackrel{\mathclap{\cref{quadratic}}}{=}\ z^2 t_{i+1,i+2} t_{i,i+1} T_{s_i s_{i+1} s_i v} + z t_{i+1,i+2} T_{s_{i+1} s_i v} + z t_{i,i+2} T_{s_i s_{i+1} v} + T_{s_iv}
  \end{align*}
  and
  \begin{align*}
    T_{i+1} T_i T_{i+1} T_w
    &= T_{i+1} T_i T_{s_{i+1} s_i s_{i+1} v} \\
    &= T_{i+1} T_i T_{s_i s_{i+1} s_i v} \\
    &= T_{i+1} T_i T_i T_{i+1} T_i T_{v} \\
    &\stackrel{\mathclap{\cref{quadratic}}}{=}\ z T_{i+1} t_{i,i+1} T_i T_{i+1} T_i T_{v} + T_{i+1} T_{i+1} T_i T_{v} \\
    &\stackrel[\mathclap{\cref{quadratic}}]{\mathclap{\cref{TF}}}{=}\ z t_{i,i+2} T_{i+1} T_{i+1} T_i T_{i+1} T_{v} + z t_{i+1,i+2} T_{s_{i+1} s_i v} + T_{s_i v} \\
    &\stackrel{\mathclap{\cref{quadratic}}}{=}\ z^2 t_{i,i+2} t_{i+1,i+2} T_{s_{i+1} s_i s_{i+1} v} + z t_{i,i+2} T_{s_i s_{i+1} v} + z t_{i+1,i+2} T_{s_{i+1} s_i v} + T_{s_i v} \\
    &\stackrel{\mathclap{\cref{teleport}}}{=}\ z^2 t_{i+1,i+2} t_{i,i+1} T_{s_i s_{i+1} s_i v} + z t_{i+1,i+2} T_{s_{i+1} s_i v} + z t_{i,i+2} T_{s_i s_{i+1} v} + T_{s_iv}.
  \end{align*}
  So \cref{couch} holds.

  The case~\cref{length4} is similar and so will be omitted.
  \details{
    Finally, in case~\cref{length4}, we have
    \[
      \ell(w) > \ell(s_iw) > \ell(s_{i+1}s_iw) > \ell(s_is_{i+1}s_iw)
      \quad \text{and} \quad
      \ell(w) > \ell(s_{i+1}w) > \ell(s_is_{i+1}w) > \ell(s_{i+1}s_is_{i+1}w).
    \]
    So we can write a reduced expression $w=s_is_{i+1}s_iv=s_{i+1}s_is_{i+1}v$.  Then we have
    \begin{align*}
      T_i &T_{i+1} T_i T_w \\
      &= T_i T_{i+1} T_i T_i T_{i+1} T_iT_{v} \\
      &\stackrel{\mathclap{\cref{quadratic}}}{=}\ z T_i T_{i+1} t_{i,i+1} T_i T_{i+1} T_i T_{v} + T_i T_{i+1} T_{i+1} T_i T_{v} \\
      &\stackrel[\mathclap{\cref{teleport}}]{\mathclap{\cref{quadratic}}}{=}\ z t_{i+1,i+2} T_i T_{i+1} T_i T_{i+1} T_i T_{v} + z T_i t_{i+1,i+2} T_{i+1} T_i T_{v} + T_i T_i T_{v} \\
      &\stackrel[\mathclap{\cref{teleport}}]{\mathclap{\cref{quadratic}}}{=}\ z t_{i+1,i+2} T_i T_{i+1} T_{i+1} T_i T_{i+1} T_{v} + z t_{i,i+2} T_w + z t_{i,i+1} T_i T_{v} + T_{v} \\
      &\stackrel{\mathclap{\cref{quadratic}}}{=}\ z^2 t_{i+1,i+2} T_i t_{i+1,i+2} T_{i+1} T_i T_{i+1} T_{v} + z t_{i+1,i+2} T_i T_i T_{i+1} T_{v} + z t_{i,i+2} T_w + z t_{i,i+1} T_{s_i v} + T_{v} \\
      &\stackrel[\mathclap{\cref{teleport}}]{\mathclap{\cref{quadratic}}}{=}\ z^2 t_{i+1,i+2} t_{i,i+2} T_i T_i T_{i+1} T_i T_{v} + z^2 t_{i+1,i+2} t_{i,i+1} T_i T_{i+1} T_{v} + z t_{i+1,i+2} T_{i+1} T_{v} + z t_{i,i+2} T_w \\
      &\qquad \qquad + z t_{i,i+1} T_{s_i v} + T_{v} \\
      &\stackrel{\mathclap{\cref{quadratic}}}{=}\ z^3 t_{i+1,i+2} t_{i,i+2} t_{i,i+1} T_w + z^2 t_{i+1,i+2} t_{i,i+2} T_{s_{i+1} s_i v} + z^2 t_{i+1,i+2} t_{i,i+1} T_{s_i s_{i+1} v} + z t_{i+1,i+2} T_{s_{i+1} v} \\
      &\qquad \qquad + z t_{i,i+2} T_w + z t_{i,i+1} T_{s_i v} + T_{v}
    \end{align*}
    and
    \begin{align*}
      T_{i+1} &T_i T_{i+1} T_w \\
      &= T_{i+1} T_i T_{i+1} T_{i+1} T_i T_{i+1} T_{v} \\
      &\stackrel{\mathclap{\cref{quadratic}}}{=}\ z T_{i+1} T_i t_{i+1,i+2} T_{i+1} T_i T_{i+1} T_{v} + T_{i+1} T_i T_i T_{i+1} T_{v} \\
      &\stackrel[\mathclap{\cref{teleport}}]{\mathclap{\cref{quadratic}}}{=}\ z t_{i,i+1} T_{i+1} T_i T_i T_{i+1} T_i T_{v} + z T_{i+1} t_{i,i+1} T_i T_{i+1} T_{v} + T_{i+1} T_{i+1} T_{v} \\
      &\stackrel[\mathclap{\cref{teleport}}]{\mathclap{\cref{quadratic}}}{=}\ z^2 t_{i,i+1} T_{i+1} t_{i,i+1} T_i T_{i+1} T_i T_{v} + z t_{i,i+1} T_{i+1} T_{i+1} T_i T_{v} + z t_{i,i+2} T_w + zt_{i+1,i+2} T_{i+1} T_{v} + T_{v} \\
      &\stackrel[\mathclap{\cref{teleport}}]{\mathclap{\cref{quadratic}}}{=}\ z^2 t_{i,i+1} t_{i,i+2} T_{i+1} T_{i+1} T_i T_{i+1} T_{v} + z^2 t_{i,i+1} t_{i+1,i+2} T_{s_{i+1} s_i v} + z t_{i,i+1} T_{s_i v} + z t_{i,i+2} T_w \\
      &\qquad \qquad + z t_{i+1,i+2} T_{s_{i+1}v} + T_{v} \\
      &\stackrel{\mathclap{\cref{quadratic}}}{=}\ z^3 t_{i,i+1} t_{i,i+2} t_{i+1,i+2} T_w + z^2 t_{i,i+1} t_{i,i+2} T_{s_i s_{i+1} v} + z^2 t_{i,i+1} t_{i+1,i+2} T_{s_{i+1} s_i v} + z t_{i,i+1} T_{s_i v} \\
      &\qquad \qquad + z t_{i,i+2} T_w + z t_{i+1,i+2} T_{s_{i+1}v} + T_{v}.
    \end{align*}
    It follows from \cref{teleport} that
    \[
      t_{i+1,i+2} t_{i,i+2} t_{i,i+1} = t_{i,i+1} t_{i,i+2} t_{i+1,i+2},\quad
      t_{i+1,i+2} t_{i,i+2} = t_{i,i+1} t_{i+1,i+2},\quad
      t_{i+1,i+2} t_{i,i+1} = t_{i,i+1} t_{i,i+2}.
    \]
    Hence \cref{couch} holds.
  }
\end{proof}

\begin{rem}\label{curraction}
  Notice that if $p \in \kk[X_1,\dotsc,X_n]$, then $X_{i+1}^{-1}\Delta_i(X_{i+1}p) \in \kk[X_1,\dotsc, X_n]$ by \cref{deltamon}.  Then, also using \cref{polyinv}, it is easy to see that if we take $V_+\subsetneq V$ to be the space obtained by replacing $P_n(A)$ with $\kk[X_1,\dotsc,X_n]\otimes A$ in \cref{prop:action}, then $V_+$ is invariant under the action of $H^\aff_{n,+}(A,z)$.
\end{rem}

\begin{theo}[Basis Theorem for $H_n^\aff(A,z)$] \label{affbasis}
  The map
  \[
    V = P_n(A) \otimes \mathcal{H} \to H_n^\aff(A,z),\qquad
    f \otimes T_w \mapsto f T_w,
  \]
  is an isomorphism of $H_n^\aff(A,z)$-modules.
\end{theo}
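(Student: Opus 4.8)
The plan is to read off both surjectivity and injectivity from the module $V$ of \cref{prop:action}: surjectivity from an easy spanning argument, and injectivity (faithfulness) from the fact that $V$ is \emph{cyclic}, generated by $1\otimes T_e$.

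First I would check that $H_n^\aff(A,z)$ is spanned over $\kk$ by $\{fT_w : f\in P_n(A),\ w\in S_n\}$. An arbitrary element is a word in the generators $a_i$ ($a\in A$), $T_i^{\pm 1}$, $X_j^{\pm 1}$. Using \cref{skein} we replace every $T_i^{-1}$ by $T_i - zt_{i,i+1}$; using \cref{TF,pna} we move all of $A^{\otimes n}$ to the left of every $T_i$ and combine it with $P_n$ into a factor from $P_n(A)=A^{\otimes n}\otimes P_n$; using \cref{demazure} (from \cref{lem:passthrough}) repeatedly we push each factor from $P_n(A)$ leftward past the remaining $T_i$'s, at the cost of lower-order terms whose $P_n(A)$-part has been hit by Demazure operators and teleporters. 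What remains is a $\kk$-combination of words in the $T_i$ alone, and such a word reduces to a $\kk$-combination of the $T_w$ by induction on word length via \cref{farcomm,braid,quadratic}, the relation \cref{quadratic} trading a non-reduced pair $T_iT_i$ for a shorter word plus a teleporter term (which is then pushed back to the left). This proves the spanning statement; in particular $f\otimes T_w\mapsto fT_w$ is surjective.

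Next, set $v_0 = 1\otimes T_e\in V$, where $e\in S_n$ is the identity. A short induction on $\ell(w)$ using the action formula of \cref{prop:action} gives $T_w\cdot v_0 = 1\otimes T_w$: writing $w = s_iw'$ with $\ell(w)=\ell(w')+1$, so that $\ell(s_iw')>\ell(w')$, we get $T_i\cdot(1\otimes T_{w'}) = s_i(1)\otimes T_{s_iw'} + zt_{i,i+1}\Delta_i(1)\otimes T_{w'} = 1\otimes T_w$ since $\Delta_i(1)=0$. Combining this with $f\cdot(g\otimes T_w)=fg\otimes T_w$ yields $(fT_w)\cdot v_0 = f\cdot(T_w\cdot v_0) = f\cdot(1\otimes T_w) = f\otimes T_w$ for all $f\in P_n(A)$ and $w\in S_n$. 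Now let $\psi\colon H_n^\aff(A,z)\to V$ be the map $h\mapsto h\cdot v_0$; this is a homomorphism of left $H_n^\aff(A,z)$-modules, and let $\phi\colon V\to H_n^\aff(A,z)$ be the $\kk$-linear map $f\otimes T_w\mapsto fT_w$ (well defined since $V$ is the free $\kk$-module on $P_n(A)\otimes\{T_w : w\in S_n\}$). The computation just made says $\psi\circ\phi = \id_V$, so $\phi$ is injective; by the spanning argument $\phi$ is surjective; hence $\phi$ is a $\kk$-linear isomorphism with inverse $\psi$. Since $\psi$ is a module homomorphism, so is $\phi = \psi^{-1}$, which is exactly the claim.

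The main obstacle has, in effect, already been cleared: the delicate part is verifying that the formulas of \cref{prop:action} really define an $H_n^\aff(A,z)$-action (in particular the braid relation), and that work is done there. Given that, the only genuine task here is the spanning argument, which is routine bookkeeping with the defining relations, and the conceptual observation that faithfulness is automatic once $V$ is known to be a module, because the retraction $\psi\circ\phi=\id_V$ forces $\phi$ to be injective.
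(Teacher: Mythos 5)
Your proposal is correct and follows essentially the same route as the paper: spanning by $\{fT_w\}$ via \cref{lem:passthrough} (the Demazure pass-through relation), and injectivity by evaluating on the cyclic vector $1\otimes T_e$ of the module constructed in \cref{prop:action}, so that the module map $h\mapsto h\cdot(1\otimes T_e)$ retracts the stated map. The only difference is presentational (you package the argument as $\psi\circ\phi=\id_V$ rather than first proving $\{fT_w\}$ is a basis and then invoking cyclicity), and you spell out the induction showing $T_w\cdot(1\otimes T_e)=1\otimes T_w$ that the paper leaves implicit.
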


\begin{proof}
  Let $D$ be a basis of $P_n(A)$, and let
  \[
    \mathcal{B}_1 = \{ f \otimes T_w : f \in D,\ w \in S_n \} \subseteq V, \quad
    \mathcal{B}_2 = \{ f T_w : f \in D,\ w \in S_n\} \subseteq H_n^\aff(A,z).
  \]
  Then $\mathcal{B}_1$ is a basis of $V$.  It follows from \cref{lem:passthrough} that $\mathcal{B}_2$ spans $H_n^\aff(A,z)$.  Furthermore, we have that $(f T_w) \cdot (1 \otimes T_1) = f \otimes T_w$, and so the elements of $\mathcal{B}_2$ are linearly independent, hence a basis.  Since $V$ is a cyclic module generated by $1\otimes T_e$, there is an $H_n^\aff(A,z)$-module homomorphism $H_n^\aff(A,z) \to V$, determined by $1 \mapsto 1\otimes T_e$. This map sends $f T_w \in\mathcal{B}_2$ to $f \otimes T_w \in \mathcal{B}_2$, hence it is an isomorphism because it gives a bijection of $\kk$-bases.
\end{proof}

For $\lambda=(\lambda_1,\dotsc,\lambda_n) \in \Z^n$, we let $X^{\lambda}=X_1^{\lambda_1} \dotsm X_n^{\lambda_n}$.  Recall that $B$ is a $\kk$-basis for $A$.

\begin{cor}\label{basis}
  The sets
  \[
    \{ \ba X^\lambda T_w : \ba \in B^{\otimes n},\ \lambda \in \Z^n,\ w\in S_n\}
    \quad \text{and} \quad
    \{ T_w \ba X^\lambda : \ba \in B^{\otimes n},\ \lambda \in \Z^n,\ w\in S_n\}
  \]
  are $\kk$-bases for $H_n^\aff(A,z)$.
\end{cor}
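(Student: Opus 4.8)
The plan is to deduce both claimed bases from the Basis Theorem \cref{affbasis}, using \cref{twf} to pass from one to the other.

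\emph{First set.} I would begin by noting that $P_n(A) = A^{\otimes n} \otimes P_n$ is free as a $\kk$-module, with basis $D := \{\ba X^\lambda : \ba \in B^{\otimes n},\ \lambda \in \Z^n\}$: here $B^{\otimes n}$ is a $\kk$-basis of $A^{\otimes n}$ since $A$ is free with basis $B$, the monomials $X^\lambda$ form a $\kk$-basis of $P_n = \kk[X_1^{\pm 1},\dotsc,X_n^{\pm 1}]$, and the elements $\ba$ and $X^\lambda$ commute in $P_n(A)$ by \cref{pna}. Since $\mathcal{H}$ is free with basis $\{T_w : w \in S_n\}$, it follows that $\{\ba X^\lambda \otimes T_w : \ba \in B^{\otimes n},\ \lambda \in \Z^n,\ w \in S_n\}$ is a $\kk$-basis of $V = P_n(A)\otimes\mathcal{H}$. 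Transporting this basis along the isomorphism $V \cong H_n^\aff(A,z)$ of \cref{affbasis}, which sends $f \otimes T_w$ to $f T_w$, shows that $\{\ba X^\lambda T_w : \ba \in B^{\otimes n},\ \lambda\in\Z^n,\ w\in S_n\}$ is a $\kk$-basis of $H_n^\aff(A,z)$.

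\emph{Second set.} I would then show the second set is obtained from the first by an invertible change of basis over $\kk$. By the first identity in \cref{twf}, for each $\ba \in B^{\otimes n}$, $\lambda \in \Z^n$, $w \in S_n$,
\[
  T_w\, \ba X^\lambda = w(\ba X^\lambda)\, T_w + \sum_{u < w} g_u T_u
\]
for suitable $g_u \in P_n(A)$. Expanding $w(\ba X^\lambda)$ and each $g_u$ in the basis $D$ writes every element $T_w \ba X^\lambda$ of the second set as a $\kk$-linear combination of elements $\bb X^\mu T_u$ of the first set with $u \le w$ in the Bruhat order. Fixing a linear extension of the Bruhat order on $S_n$ and ordering all index triples accordingly, the transition matrix between the two sets becomes block triangular; its diagonal block indexed by $w$ is the matrix, in the basis $D$, of the $\kk$-linear automorphism $f \mapsto w(f)$ of $P_n(A)$ (in fact a signed permutation matrix, as $w$ permutes $D$ up to signs $\pm 1$). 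Hence every diagonal block is invertible over $\kk$, so the transition matrix has unit determinant, and the second set is therefore a $\kk$-basis of $H_n^\aff(A,z)$ as well.

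\emph{Main obstacle.} There is no serious difficulty here; the one point requiring attention is that $\kk$ is merely a commutative ring, so the conclusion is not a dimension count but the observation that a block-triangular matrix over $\kk$ whose diagonal blocks are invertible has unit determinant. I expect the only real work to be the bookkeeping that organizes the output of \cref{twf} into this Bruhat-triangular form.
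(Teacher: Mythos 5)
Your proposal is correct and follows essentially the same route as the paper: the first set is read off directly from \cref{affbasis}, and the second is obtained from the first via the triangularity coming from \cref{twf} with respect to the Bruhat order, which is exactly the paper's ``induction on the length of $w$'' in matrix form. The only cosmetic caveat is that ``unit determinant'' should be read as block-triangular invertibility (the blocks, indexed by the finite set $S_n$, are infinite since $\lambda$ ranges over $\Z^n$), with the diagonal blocks invertible because $w$ permutes the basis $D$ up to signs.
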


\begin{proof}
  It follows immediately from \cref{affbasis} that the first set is a basis. The fact that the second set is also a basis follows from \cref{twf} by induction on the length of $w$.
\end{proof}

\begin{cor}\label{currbasis}
  The sets
  \[
    \{ \ba X^{\lambda} T_w : \ba \in B^{\otimes n},\ \lambda\in \N^n,\ w\in S_n\}
    \quad \text{and} \quad
    \{ T_w \ba X^{\lambda} : \ba \in B^{\otimes n},\ \lambda\in \N^n,\ w\in S_n\}
  \]
  are $\kk$-bases for $H_{n,+}^\aff(A,z)$.
\end{cor}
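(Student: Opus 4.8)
The plan is to run the arguments behind \cref{affbasis,basis} essentially verbatim, but with the ring $P_n(A) = A^{\otimes n} \otimes \kk[X_1^{\pm 1},\dotsc,X_n^{\pm 1}]$ replaced throughout by its subalgebra $P_{n,+}(A) := A^{\otimes n} \otimes \kk[X_1,\dotsc,X_n]$ and with the module $V$ replaced by the submodule $V_+$ of \cref{curraction}. The first thing I would record is that $\{\ba X^\lambda : \ba \in B^{\otimes n},\ \lambda \in \N^n\}$ is a $\kk$-basis of $P_{n,+}(A)$, so that $V_+ = P_{n,+}(A) \otimes \mathcal{H}$ has $\kk$-basis $\{\ba X^\lambda \otimes T_w : \ba \in B^{\otimes n},\ \lambda \in \N^n,\ w \in S_n\}$.

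First I would handle the set $\{\ba X^\lambda T_w\}$. For spanning, I would note that $H_{n,+}^\aff(A,z)$ is generated by $A^{\otimes n}$, by $\kk[X_1,\dotsc,X_n]$, and by the $T_i$; pushing all the $T_i$ to the right using \cref{TF} and \cref{demazure}, and then reducing the resulting words in the $T_i$ via \cref{farcomm,braid,quadratic}, every element becomes a $\kk$-linear combination of products $g T_w$ with $g \in P_{n,+}(A)$. The one point that needs attention here is that $\Delta_i$ maps $\kk[X_1,\dotsc,X_n]$ into itself (\cref{polyinv}, equivalently \cref{deltamon}) and $t_{i,i+1} \in A^{\otimes n} \subseteq P_{n,+}(A)$, so no negative powers of the $X_j$ ever appear; expanding each $g$ in the basis of $P_{n,+}(A)$ then shows $\{\ba X^\lambda T_w\}$ spans. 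For linear independence, \cref{curraction} makes $V_+$ an $H_{n,+}^\aff(A,z)$-module, and exactly as in the proof of \cref{affbasis} one checks $(\ba X^\lambda T_w) \cdot (1 \otimes T_e) = \ba X^\lambda \otimes T_w$; since the elements $\ba X^\lambda \otimes T_w$ form a $\kk$-basis of $V_+$, the spanning set is carried bijectively onto a basis, hence is itself linearly independent (equivalently, the induced surjection $H_{n,+}^\aff(A,z) \to V_+$ of cyclic modules is an isomorphism).

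For the set $\{T_w \ba X^\lambda\}$ I would argue, as in \cref{basis}, by induction on $\ell(w)$ using \cref{twf}. The key observation is that for $f \in P_{n,+}(A)$ both $w(f)$ and all the correction terms $f_u, f'_u$ in \cref{twf} again lie in $P_{n,+}(A)$: the permutation action of $S_n$ preserves $P_{n,+}(A)$, and the correction terms are assembled from $t_{i,i+1}$ and from applications of the $\Delta_i$, which preserve $P_{n,+}(A)$ by the remarks above. Thus $T_w(\ba X^\lambda) = w(\ba X^\lambda)\,T_w + \sum_{u<w} f_u T_u$ with $f_u \in P_{n,+}(A)$; expanding $w(\ba X^\lambda)$ and the $f_u$ in the basis $\{\ba' X^{\lambda'}\}$ and inducting on length shows that $\{T_w \ba X^\lambda\}$ and $\{\ba X^\lambda T_w\}$ span the same $\kk$-module and that the transition between the two is unitriangular with respect to the filtration of $H_{n,+}^\aff(A,z) = \bigoplus_w P_{n,+}(A)\,T_w$ by $\ell(w)$ (the leading term for $T_w\ba X^\lambda$ being $w(\ba X^\lambda)T_w$, with $w$ acting invertibly on $P_{n,+}(A)$). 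It follows that $\{T_w \ba X^\lambda\}$ is also a $\kk$-basis.

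I do not expect a genuine obstacle here: the whole proof is a transcription of the proofs of \cref{affbasis,basis}. The only step requiring real care — and so the ``main obstacle,'' mild as it is — is the bookkeeping verifying that none of the straightening operations (\cref{TF}, \cref{demazure}, the $\Delta_i$, and the two $S_n$-actions) ever introduces negative powers of the $X_j$; this is precisely what \cref{polyinv}, \cref{deltamon}, and the observation in \cref{curraction} are designed to guarantee.
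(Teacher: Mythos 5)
Your proposal is correct and is essentially the paper's own argument: the paper's proof of \cref{currbasis} simply says to repeat the reasoning of \cref{affbasis,basis} using the invariant submodule $V_+$ of \cref{curraction}, which is exactly what you do, with the key point (that \cref{polyinv,deltamon} prevent negative powers of the $X_j$ from appearing) correctly identified.
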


\begin{proof}
  This uses the same reasoning as \cref{basis}, due to \cref{curraction}.
\end{proof}

\begin{rem}
  For the case of the affine Hecke algebras (see \cref{eg:affHecke}), \cref{basis} recovers a result of Lusztig \cite[Prop.~3.7]{Lus89}.  For affine Yokonuma--Hecke algebras (see \cref{affYHalg}), it was proved in \cite[Th.~4.4]{CPd16}.
\end{rem}

\subsection{Description of the center}

We now compute the center of quantum affine wreath algebras.  By \cref{pna}, we have that $P_n(Z(A)) = Z(A)^{\otimes n}\otimes P_n$ is a subalgebra of $P_n(A)$.

\begin{lem}\label{lem:centPn}
  The centralizer of $P_n$ in $H_n^\aff(A,z)$ is equal to $P_n(A)$.
\end{lem}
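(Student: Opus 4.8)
The plan is to prove the two inclusions separately. For $P_n(A)\subseteq$ (centralizer of $P_n$): by \cref{pna} every element of $A^{\otimes n}$ commutes with each $X_j^{\pm 1}$, and $P_n$ is commutative; since $P_n(A)=A^{\otimes n}\otimes P_n$ has $P_n=1^{\otimes n}\otimes P_n$ sitting in its second tensor factor, $P_n$ is central in $P_n(A)$, so $P_n(A)$ centralizes $P_n$ inside $H_n^\aff(A,z)$. (Recall that $P_n(A)$ embeds in $H_n^\aff(A,z)$ by \cref{affbasis}.)

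For the reverse inclusion, I would use \cref{affbasis} to write an arbitrary $x\in H_n^\aff(A,z)$ uniquely as $x=\sum_{w\in S_n}f_wT_w$ with $f_w\in P_n(A)$, so that $H_n^\aff(A,z)=\bigoplus_{w\in S_n}P_n(A)T_w$ as a left $P_n(A)$-module. Assume $px=xp$ for all $p\in P_n$. Since $P_n$ is central in $P_n(A)$, the $T_w$-component of $px$ is $f_wp$. On the other hand, \cref{twf} applied to $f=p\in P_n$ (so $w(f)={}^wp$) gives $T_wp={}^wp\,T_w+\sum_{u<w}g_{u,w}T_u$ for suitable $g_{u,w}\in P_n(A)$, hence the $T_v$-component of $xp$ is $f_v({}^vp)+\sum_{w:\,v<w}f_wg_{v,w}$. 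Let $w_0$ be of maximal length among $\{w:f_w\neq 0\}$. Then no contributing $w$ satisfies $w_0<w$ (that would force $\ell(w_0)<\ell(w)$), so the correction terms $\sum_{w:\,w_0<w}f_wg_{w_0,w}$ vanish and the $T_{w_0}$-component of $xp$ is just $f_{w_0}({}^{w_0}p)$. Comparing $T_{w_0}$-components in $px=xp$ therefore yields $f_{w_0}\big(p-{}^{w_0}p\big)=0$ for all $p\in P_n$.

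To conclude, suppose $w_0\neq e$; choose $i$ with $w_0(i)\neq i$ and take $p=X_i$, so that $f_{w_0}(X_i-X_{w_0(i)})=0$ in $P_n(A)=A^{\otimes n}[X_1^{\pm1},\dotsc,X_n^{\pm1}]$. Viewing this as a Laurent polynomial ring in the central variable $X_i$ over $R:=A^{\otimes n}[X_j^{\pm1}:j\neq i]$, the element $X_i-X_{w_0(i)}$ is monic of degree one in $X_i$, so multiplication by it is injective on $R[X_i^{\pm1}]$ (inspect the top $X_i$-degree coefficient). Hence $f_{w_0}=0$, a contradiction. Thus $w_0=e$ and $x=f_eT_e=f_e\in P_n(A)$, which completes the argument.

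I expect the non-zero-divisor claim in the last paragraph to be the only point that needs care, precisely because $A$ — and hence $A^{\otimes n}$ — need not be a domain; the monic-leading-coefficient observation handles it. Everything else is bookkeeping with the basis from \cref{affbasis} and the straightening identity \cref{twf}.
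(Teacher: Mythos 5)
Your proof is correct and follows essentially the same route as the paper: expand in the basis $\{f_wT_w\}$ from \cref{affbasis}, use the straightening relation \cref{twf}, pick a maximal element of the support (maximal length for you, maximal in Bruhat order in the paper, which amounts to the same thing), and compare $T_{w_0}$-coefficients of $X_i\alpha$ and $\alpha X_i$. The only difference is that you spell out why $X_i-X_{w_0(i)}$ is a non-zero-divisor in $P_n(A)$, a point the paper leaves implicit.
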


\begin{proof}
  By \cref{pna}, it is clear that elements of $P_n(A)$ commute with elements of $P_n$.  Now let $\alpha = \sum_{w\in S_n} f_w T_w \in H_n^\aff(A,z)$, where $f_w\in P_n(A)$ for all $w\in S_n$. Let $v\in S_n$ be a maximal element in the strong Bruhat order such that $f_{v}\neq 0$. Suppose $v\neq 1$, and let $1\leq i \leq n$ such that $v(i)\neq i$. Then, by \cref{twf} we have
  \[
    X_i \alpha -\alpha X_i
    = (X_i - X_{v(i)})f_{v}T_{v} + \sum_{u \ngeq v} g_u T_u,
  \]
  for some $g_u\in P_n(A)$. Thus, by \cref{affbasis}, $\alpha$ does not centralize $X_i$; hence the result.
\end{proof}

\begin{lem}\label{lem:Zinpnza}
  The centralizer of $P_n(A)$ in $H_n^\aff(A,z)$ is equal to $P_n(Z(A))$.
\end{lem}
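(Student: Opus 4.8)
The plan is to reduce the statement to a computation of the center of the subalgebra $P_n(A) = A^{\otimes n} \otimes P_n$, and then to the standard fact that the center of a tensor product of superalgebras is the tensor product of the centers.

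First I would note that, since $P_n \subseteq P_n(A)$, any element of $H_n^\aff(A,z)$ that centralizes $P_n(A)$ in particular centralizes $P_n$, and hence lies in $P_n(A)$ by \cref{lem:centPn}. Moreover, since $P_n$ is commutative and purely even, it is a central subalgebra of $P_n(A)$, so for $f \in P_n(A)$ centralizing $P_n(A)$ is the same as supercommuting with $A^{\otimes n}$. Thus the centralizer of $P_n(A)$ in $H_n^\aff(A,z)$ equals $\{ f \in P_n(A) : f \text{ supercommutes with } A^{\otimes n} \}$, and I claim this is $Z(A^{\otimes n}) \otimes P_n$: writing $f \in A^{\otimes n} \otimes P_n$ uniquely as a finite sum $\sum_{\lambda \in \Z^n} a_\lambda \otimes X^\lambda$ with $a_\lambda \in A^{\otimes n}$, and using that the monomials $X^\lambda$ are even and $A^{\otimes n}$-linearly independent, one compares homogeneous components to conclude that $f$ supercommutes with $b \in A^{\otimes n}$ if and only if each $a_\lambda$ does.

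It then remains to identify $Z(A^{\otimes n})$ with $Z(A)^{\otimes n}$. The inclusion $Z(A)^{\otimes n} \subseteq Z(A^{\otimes n})$ is immediate from the definition of $Z(A)$ together with the sign rule for tensor products of homogeneous elements. For the reverse inclusion, I would expand an element of $Z(A^{\otimes n})$ in the homogeneous $\kk$-basis $B^{\otimes n}$ of $A^{\otimes n}$ and, for each $1 \le i \le n$, impose supercommutation with $b_i = 1^{\otimes(i-1)} \otimes b \otimes 1^{\otimes(n-i)}$ for every $b \in B$; since $A$ is free over $\kk$, this forces the $i$-th tensor slot of each basis term to lie in $Z(A)$, whence the element lies in $Z(A)^{\otimes n}$. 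Combining the two identities, the centralizer of $P_n(A)$ in $H_n^\aff(A,z)$ equals $Z(A)^{\otimes n} \otimes P_n = P_n(Z(A))$.

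The argument is essentially formal, so I do not expect a genuine obstacle; the only point requiring care is the bookkeeping with super-signs and homogeneous bases in the identification $Z(A^{\otimes n}) = Z(A)^{\otimes n}$, which is standard but must be carried out attentively.
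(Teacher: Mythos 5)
Your proposal is correct and follows essentially the same route as the paper: reduce to $P_n(A)$ via \cref{lem:centPn}, then identify the centralizer with $Z(P_n(A)) = Z(A^{\otimes n})\otimes P_n = Z(A)^{\otimes n}\otimes P_n$, using freeness of $A$ over $\kk$ for the last identification. You simply spell out in more detail the steps the paper states in one line.
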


\begin{proof}
  The centralizer of $P_n(A)$ inside $H_n^\aff(A,z)$ is contained in the centralizer of $P_n$, which by \cref{lem:centPn} is equal to $P_n(A)$. Hence the centralizer of $P_n(A)$ is equal to the center $Z(P_n(A))$.  Using \cref{pna}, we have
  \[
    Z(P_n(A))
    = Z\left( A^{\otimes n}\otimes P_n \right)
    = Z(A^{\otimes n})\otimes Z(P_n)
    = Z(A)^{\otimes n}\otimes P_n=P_n(Z(A)),
  \]
  where we use the fact that $Z(A^{\otimes n}) = Z(A)^{\otimes n}$ since $A$ is free over $\kk$.
\end{proof}

For any subset $Y \subseteq P_n(A)$, we define
\[
  Y^{S_n} = \{ f \in Y : w(f) = f \text{ for all } w \in S_n\}.
\]

\begin{theo} \label{center}
  We have $Z \big( H_n^\aff(A,z) \big) = P_n(Z(A))^{S_n}$.
\end{theo}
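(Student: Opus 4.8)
The plan is to locate the center inside $P_n(Z(A))$ using the centralizer computations already available, and then to decide which elements of $P_n(Z(A))$ commute with the braid generators $T_i$.

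First I would observe that every central element super-commutes with $P_n(A)$, so by \cref{lem:Zinpnza} we get $Z\big(H_n^\aff(A,z)\big) \subseteq P_n(Z(A))$, the latter being the centralizer of $P_n(A)$. Since $P_n(A)$ together with $T_1,\dotsc,T_{n-1}$ generates $H_n^\aff(A,z)$, an element $f \in P_n(Z(A))$ is central if and only if $T_i f = f T_i$ for every $i$. I would then rewrite this using \cref{demazure}, which gives $T_i f = s_i(f) T_i + z t_{i,i+1}\Delta_i(f)$. By the Basis Theorem \cref{affbasis}, $H_n^\aff(A,z) = \bigoplus_{w\in S_n} P_n(A) T_w$, and in this decomposition $s_i(f)T_i$ and $f T_i$ lie in the $T_{s_i}$-summand while $z t_{i,i+1}\Delta_i(f)$ lies in the $T_e$-summand; hence $T_i f = f T_i$ precisely when $s_i(f)=f$ \emph{and} $z t_{i,i+1}\Delta_i(f)=0$.

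The crux is to show that for $f \in P_n(Z(A))$ with $s_i(f)=f$ one automatically has $t_{i,i+1}\Delta_i(f)=0$, so that the extra equation is free. The key input is the teleporter relation \cref{teleport}: because $t_{i,i+1}$ is even and $Z(A)^{\otimes n}=Z(A^{\otimes n})$, any $a\in Z(A)^{\otimes n}$ commutes with $t_{i,i+1}$, and combining $a\,t_{i,i+1}=t_{i,i+1}\,a$ with \cref{teleport} yields $t_{i,i+1}\,a = t_{i,i+1}\,s_i(a)$. Since ${}^{s_i}$ fixes $t_{i,i+1}$, \cref{sneak} gives
\[
  t_{i,i+1}\Delta_i(f) = \Delta_i\big(t_{i,i+1}f\big) = \frac{t_{i,i+1}f - t_{i,i+1}\,{}^{s_i}f}{1-X_iX_{i+1}^{-1}} .
\]
Writing $f=\sum_\mu a_\mu X^\mu$ with $a_\mu\in Z(A)^{\otimes n}$, the hypothesis $s_i(f)=f$ forces $a_{s_i\mu}=s_i(a_\mu)$, hence ${}^{s_i}f=\sum_\mu s_i(a_\mu)X^\mu$, and therefore $t_{i,i+1}(f-{}^{s_i}f)=\sum_\mu\big(t_{i,i+1}a_\mu - t_{i,i+1}s_i(a_\mu)\big)X^\mu=0$.

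Combining everything, $Z\big(H_n^\aff(A,z)\big)=\{f\in P_n(Z(A)):s_i(f)=f\text{ for }1\le i\le n-1\}$, which equals $P_n(Z(A))^{S_n}$ since $s_1,\dotsc,s_{n-1}$ generate $S_n$. The only genuine obstacle is the third step: a naive manipulation with \cref{teleport} alone shows only that $t_{i,i+1}$ and $\Delta_i(f)$ \emph{anticommute}, not that their product vanishes, and it is the interplay of centrality of the coefficients $a_\mu$ with the teleporter identity that forces the vanishing; the remainder is bookkeeping with \cref{affbasis} and the Demazure identities.
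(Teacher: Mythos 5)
Your proof is correct and follows essentially the same route as the paper: the inclusion $Z\big(H_n^\aff(A,z)\big)\subseteq P_n(Z(A))$ via \cref{lem:Zinpnza}, then the teleporter identity $t_{i,i+1}\,\ba = t_{i,i+1}\,s_i(\ba)$ for $\ba\in Z(A)^{\otimes n}$ to show $t_{i,i+1}\Delta_i(f)=0$ for $S_n$-invariant $f$, with \cref{demazure} and the basis theorem handling the commutation with the $T_i$. The only cosmetic difference is that the paper's forward direction conjugates $f$ by $T_i$ and invokes \cref{twf}, whereas you compare components in the decomposition $\bigoplus_w P_n(A)T_w$ directly; these are interchangeable.
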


\begin{proof}
  Suppose $f \in Z(H_n^\aff(A,z))\subseteq P_n(Z(A))$.  For $1 \leq i \leq n-1$, \cref{twf} implies that
  \[
    T_i f = s_i(f) T_i + g
    \quad \text{for some } g \in P_n(A).
  \]
  Then
  \[
    f
    = T_i f T_i^{-1}
    = \big( s_i(f)T_i + g \big) T_i^{-1}
    \stackrel{\cref{skein}}{=} s_i(f) + g (T_i-zt_{i,i+1})
    = s_i(f) + g T_i - z g t_{i,i+1}.
  \]
  By \cref{basis}, we have $g=0$; hence $f=s_i(f)$.  Since this is true for all $1 \leq i \leq n-1$, it follows that $f \in P_n(Z(A))^{S_n}$.

  Now suppose $f \in P_n(Z(A))^{S_n}$.  For each $1 \le i \le n-1$ and $\ba \in Z(A)^{\otimes n}$, we have
  \[
    t_{i,i+1} \ba
    = \ba t_{i,i+1}
    \stackrel{\cref{teleport}}{=} t_{i,i+1} s_i(\ba).
  \]
  It follows that $t_{i,i+1} {}^{s_i} f = t_{i,i+1} s_i(f)$, and so
  \[
    t_{i,i+1} \Delta_i(f)
    = t_{i,i+1} \frac{f - {}^{s_i}f}{1-X_iX_{i+1}^{-1}}
    = t_{i,i+1} \frac{f-s_i(f)}{1-X_iX_{i+1}^{-1}}
    = 0.
  \]
  Thus, by \cref{demazure}, we have $T_i f = f T_i$ for all $1 \le i \le n-1$.  Since $f$ clearly commutes with all elements of $P_n(A)$, we have $f \in Z \big( H_n^\aff(A,z) \big)$.
\end{proof}

\begin{rem}
  For affine Hecke algebras (see \cref{eg:affHecke}), \cref{center} recovers a well-known description of the center (see \cite[Prop.~3.11]{Lus89}).  For affine Yokonuma--Hecke algebras (see \cref{affYHalg}), it recovers \cite[Th.~2.7]{CW15}.
\end{rem}

\begin{prop}
  Suppose $A'$ is a maximal commutative subalgebra of $A$.  Then $P_n(A')$ is a maximal commutative subalgebra of $H_n^\aff(A,z)$.
\end{prop}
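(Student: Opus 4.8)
The plan is to reduce the statement, via the centralizer results already proved, to the fact that a maximal commutative subalgebra of $A^{\otimes n}$ is its own centralizer. First observe that $P_n(A') = (A')^{\otimes n}\otimes P_n$ is commutative: since $A'$ is commutative, so is $(A')^{\otimes n}$ (a tensor product of commutative superalgebras is commutative), and the $X_i^{\pm 1}$ commute with $A^{\otimes n}$ by \cref{pna} and lie in parity zero. Now let $B\subseteq H_n^\aff(A,z)$ be any commutative subalgebra containing $P_n(A')$; the goal is to show $B = P_n(A')$. Since $P_n \subseteq P_n(A') \subseteq B$ and $P_n$ is even, every element of $B$ commutes with $P_n$, so \cref{lem:centPn} gives $B\subseteq P_n(A)$.

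It therefore remains to compute the centralizer $C$ of $P_n(A')$ inside $P_n(A) = A^{\otimes n}\otimes P_n$. Because $P_n$ is central in $P_n(A)$ and $P_n(A)$ is free as a left $P_n$-module on a $\kk$-basis of $A^{\otimes n}$ (by \cref{affbasis}, or directly from the tensor decomposition), an element written as $\sum_\mu c_\mu X^\mu$ with $c_\mu\in A^{\otimes n}$ lies in $C$ if and only if each $c_\mu$ centralizes $(A')^{\otimes n}$ in $A^{\otimes n}$; hence $C = C_{A^{\otimes n}}\big((A')^{\otimes n}\big)\otimes P_n$. Next, exactly as in the proof of \cref{lem:Zinpnza} (where the identity $Z(A^{\otimes n}) = Z(A)^{\otimes n}$ was used), the freeness of $A$ over $\kk$ gives $C_{A^{\otimes n}}\big((A')^{\otimes n}\big) = \big(C_A(A')\big)^{\otimes n}$, by expanding in a $\kk$-basis of the last tensor factor and peeling off factors one at a time. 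Finally, maximality of $A'$ forces $C_A(A') = A'$: for $c\in C_A(A')$, the subalgebra generated by $A'$ together with $c$ is commutative (one uses that $c$ centralizes $A'$ and that $c^2$ again centralizes $A'$), so it equals $A'$ by maximality and $c\in A'$. Combining, $C = (A')^{\otimes n}\otimes P_n = P_n(A')$, so $B\subseteq P_n(A')$ and thus $B = P_n(A')$.

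The only work beyond routine is in the two tensor-factor arguments — that a tensor product of commutative superalgebras is commutative, and that $C_{A^{\otimes n}}((A')^{\otimes n}) = (C_A(A'))^{\otimes n}$ — where one must keep track of the Koszul signs; both follow the template already used for the center computation in \cref{lem:Zinpnza}. The step I expect to be the real crux is the last one, $C_A(A') = A'$: in the purely even setting this is the standard fact that a maximal commutative subalgebra coincides with its own centralizer, resting on the elementary observation that adjoining a centralizing element to a commutative subalgebra preserves commutativity; in the general super setting the same argument handles even centralizing elements (and shows $c^2\in A'$ for odd $c$), so one either invokes this or restricts to the case where $A$ is concentrated in even degree.
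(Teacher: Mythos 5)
Your proof follows essentially the same route as the paper's: reduce to $P_n(A)$ via \cref{lem:centPn}, expand in the basis $\{X^\lambda\}$ of \cref{basis} to see that each coefficient must centralize $(A')^{\otimes n}$, and conclude from maximality of $A'$ in $A$. The only difference is that you spell out the final step (via $C_{A^{\otimes n}}\big((A')^{\otimes n}\big)=\big(C_A(A')\big)^{\otimes n}$ and $C_A(A')=A'$) and flag the super-sign caveat, both of which the paper leaves implicit.
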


\begin{proof}
  Suppose $\alpha \in H_n^\aff(A,z)$ commutes with all elements of $P_n(A')$.  By \cref{lem:centPn}, we have $\alpha \in P_n(A)$.  Thus $\alpha = \sum_{\lambda \in \Z^n} \ba_\lambda X^\lambda$, for some $\ba_\lambda \in A^{\otimes n}$.  Then, for all $\bb \in (A')^{\otimes n}$, we have
  \[
    \bb \alpha = \alpha \bb
    \implies \sum_{\lambda \in \Z^n} \bb \ba_\lambda X^\lambda
    = \sum_{\lambda \in \Z^n} \ba_\lambda \bb X^\lambda.
  \]
  Thus, by \cref{basis}, $\bb \ba_\lambda = \ba_\lambda \bb$ for all $\lambda$.  Since $A'$ is a maximal commutative subalgebra of $A$, this implies that $\ba_\lambda \in (A')^{\otimes n}$ for all $\lambda$.  Hence $\alpha \in P_n(A')$.
\end{proof}

\subsection{Jucys--Murphy elements\label{subsec:JM}}

Define the \emph{Jucys--Murphy} elements in $H_n(A,z)$ by
\begin{equation}\label{eq:jucys}
  J_1 = 1,\qquad
  J_i = T_{i-1}J_{i-1}T_{i-1} = T_{i-1} \dotsm T_2T_1^2T_2 \dotsm T_{i-1}, \qquad 2\leq i\leq n.
\end{equation}
These elements generalize the well-known Jucys--Murphy elements in the Iwahori--Hecke algebra, as well as the Jucys--Murphy elements of the Yokonuma--Hecke algebra introduced in \cite[(2.14)]{CPd14}.

\begin{prop}\label{prop:jucysmap}
  There is a surjective algebra homomorphism $H_n^\aff(A,z)\to H_n(A,z)$ defined by
  \[
    X_i \mapsto J_i,\qquad
    \ba \mapsto \ba,\qquad
    T_w \mapsto T_w,\quad
    1 \leq i\leq n,\ \ba \in A^{\otimes n},\ w\in S_n.
  \]
\end{prop}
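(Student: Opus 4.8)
The plan is to check that the map defined on generators respects the defining relations of $H_n^\aff(A,z)$. Surjectivity is then immediate, since the images include all the $\ba$ and all the $T_i$, which generate $H_n(A,z)$. The relations \cref{farcomm,braid,quadratic,TF} involve only the $T_i$ and $\ba$, which are sent to themselves, so these are automatically preserved. Thus the work is entirely in verifying the three relations \cref{commute,inverse,pna} that involve the $X_i$, with $X_i$ replaced by $J_i$.

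First I would record the basic commutation properties of the $J_i$ inside $H_n(A,z)$. The key observations are: (i) $T_j$ commutes with $J_i$ whenever $j \ne i-1, i$, which follows from \cref{farcomm,braid} together with the definition \cref{eq:jucys} of $J_i$ as a product of $T_k$ with $k \le i-1$; (ii) each $J_i$ commutes with all $\ba \in A^{\otimes n}$, which one proves by induction on $i$ using \cref{TF} — indeed $J_i = T_{i-1} J_{i-1} T_{i-1}$, and conjugating $\ba$ by $T_{i-1}$ applies $s_{i-1}$, then $J_{i-1}$ commutes with $s_{i-1}(\ba)$ by the inductive hypothesis, then conjugating back by $T_{i-1}$ undoes the permutation. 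This handles relation \cref{pna}. Property (i) immediately gives relation \cref{commute}: for $j \ne i-1, i$ with $j = $ the relevant index, $T_j$ commutes with $J_j$ and $J_{j+1}$... more precisely, for the relation $T_k X_l = X_l T_k$ with $l \ne k, k+1$, we need $T_k$ to commute with $J_l$, which holds by (i) since $l \ne k, k+1$ means $k \ne l-1, l$.

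The remaining relation \cref{inverse}, namely $T_i J_i T_i = J_{i+1}$, is true essentially by definition: it is exactly the recursion $J_{i+1} = T_i J_i T_i$ from \cref{eq:jucys}. So in fact this relation is built into the definition of the Jucys--Murphy elements and requires no computation at all. The main (and only mildly nontrivial) obstacle is the inductive argument for property (ii), that each $J_i$ is central with respect to $A^{\otimes n}$; but this is a short induction using \cref{TF}, and it also underlies the well-definedness of the map at the level of the free product before imposing relations. Once these pieces are assembled, the universal property of $H_n^\aff(A,z)$ as a quotient of the free product $P_n \star H_n(A,z)$ gives the homomorphism, and surjectivity follows as noted.
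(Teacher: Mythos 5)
Your approach is the same as the paper's: the relations \cref{farcomm,braid,quadratic,TF} are automatic, \cref{inverse} holds by the very definition \cref{eq:jucys}, \cref{pna} follows from \cref{TF} by induction, and \cref{commute} follows from the commutation of $T_k$ with $J_l$ for $k\ne l-1,l$, proved by repeated use of \cref{farcomm,braid}; surjectivity is immediate. However, there is a small but real omission. Since $H_n^\aff(A,z)$ is a quotient of $\kk[X_1^{\pm 1},\dotsc,X_n^{\pm 1}]\star H_n(A,z)$, to invoke the universal property you must first have an algebra map out of the commutative Laurent polynomial factor, and for that you need the images $J_1,\dotsc,J_n$ to be \emph{invertible} and \emph{pairwise commuting}; these are relations of the domain not listed among \cref{commute,inverse,pna}, and the paper checks them explicitly. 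Invertibility is immediate (each $J_i$ is a product of the invertible $T_k$), and $J_iJ_j=J_jJ_i$ follows from your own property (i): for $j<i$, the element $J_j$ is a product of $T_k$ with $k\le j-1\le i-2$, so each such $T_k$ commutes with $J_i$. Relatedly, your remark that property (ii) (commutation with $A^{\otimes n}$) ``underlies the well-definedness of the map at the level of the free product'' is misplaced: well-definedness on the free product requires exactly the invertibility and mutual commutativity of the $J_i$, while (ii) is only needed afterwards, to verify the imposed relation \cref{pna}. With these two checks added, your argument matches the paper's proof.
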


\begin{proof}
  We need to check that this maps preserves the relations \cref{commute,inverse,pna}, in addition to the facts that $X_iX_j=X_jX_i$ and that $X_i$ is invertible for $1\leq i,j\leq n$.  Clearly $J_i$ is invertible for all $1\leq i\leq n$ because $T_k$ is invertible for all $1\leq k\leq n$. Also, \cref{inverse} follows from the definition of $J_i$.  Relation \cref{pna} is satisfied because of \cref{TF}, while the fact that $J_iJ_j=J_jJ_i$ for all $1\leq i,j\leq n$, in addition to relation \cref{commute}, follows from repeated use of \cref{farcomm,braid}.
\end{proof}

\subsection{Mackey Theorem}

For a composition $\mu = (\mu_1, \dotsc, \mu_r)$ of $n$, let
\[
  S_\mu = S_{\mu_1} \times \dotsb \times S_{\mu_r} \subseteq S_n
\]
denote the corresponding Young subgroup.  We then define the parabolic subalgebra $H_\mu(A,z) \subseteq H_n(A,z)$ to be the subalgebra generated by $A^{\otimes n}$ and $\{T_w : w\in S_\mu\}$.  We also define $H_\mu^\aff(A,z) \subseteq H_n^\aff(A,z)$ to be the subalgebra generated by $H_\mu(A,z)$ and $P_n$. So we have an isomorphism of algebras
\[
  H_\mu^\aff(A,z) \cong H_{\mu_1}^\aff(A,z) \otimes \dotsb \otimes H_{\mu_r}^\aff(A,z),
\]
and a parity-preserving isomorphism of $\kk$-modules
\[
  H_\mu^\aff(A,z) \cong P_n \otimes H_\mu(A,z).
\]
Let $D_{\mu,\nu}$ denote the set of minimal length $(S_\mu,S_\nu)$-double coset representatives in $S_n$. By \cite[Lem.~1.6(ii)]{DJ86}, for $\pi\in D_{\mu,\nu}$, $S_\mu \cap \pi S_{\nu}\pi^{-1}$ and $\pi^{-1} S_\mu \pi \cap S_\nu$ are Young subgroups of $S_n$; hence we can define compositions $\mu\cap \pi\nu$ and $\pi^{-1}\mu\cap \nu$ by
\[
  S_\mu\cap \pi S_{\nu}\pi^{-1}
  = S_{\mu\cap\pi\nu}
  \quad \text{and} \quad
  \pi^{-1} S_\mu \pi\cap S_\nu
  = S_{\pi^{-1}\mu\cap \nu}.
\]
Furthermore, the map $w \mapsto \pi^{-1} w\pi$ restricts to a length preserving isomorphism
\[
  S_{\mu\cap \pi\nu}\to S_{\pi^{-1}\mu\cap \nu}
\]
which, due to the length-preserving property, induces an isomorphism of algebras
\[
  H_{\mu\cap \pi\nu}(A,z) \to H_{\pi^{-1}\mu\cap \nu}(A,z),\quad
  \ba \mapsto \pi^{-1}(\ba),\quad
  T_w \mapsto T_{\pi^{-1}w\pi},\quad
  \ba \in A^{\otimes n},\ w \in S_n.
\]

It is easy to verify that, for $\pi\in D_{\mu,\nu}$ and $s_i\in S_{\mu\cap\pi\nu}$, we have $\pi^{-1}(i+1) = \pi^{-1}(i)+1$, and hence $\pi^{-1}s_i\pi = s_{\pi^{-1}(i)}$.
\details{
  The fact that $s_i \in S_{\mu \cap \pi \nu} = S_\mu \cap \pi S_\nu \pi^{-1}$ implies that $i$ and $i+1$ are in the same subset of the partition $\mu$ (thought of as a partition of the set $\{1,2,\dotsc,n\}$ into the subsets $\{1,\dotsc,\mu_1\}$, $\{\mu_1+1,\dotsc, \mu_1+\mu_2\}$, etc.) and that $\pi^{-1}(i)$ and $\pi^{-1}(i+1)$ are in the same subset of the partition $\nu$.  On the other hand, $\pi \in D_{\mu,\nu}$ means that $\pi$ does not invert elements in the same subset of $\nu$ and $\pi^{-1}$ does not invert elements in the same subset of $\mu$.  It follows that $\pi^{-1}(i) < \pi^{-1}(i+1)$.  Suppose, towards a contradiction, that $\pi^{-1}(i) + 1 < \pi^{-1}(i+1)$.  Then there exists an integer $j$ with $\pi^{-1}(i) < j < \pi^{-1}(i+1)$.  Then  $\pi^{-1}(i)$, $j$, and $\pi^{-1}(i+1)$ are are in the same subset of the partition $\nu$.  Hence $\pi$ does not invert these elements, and so $i < \pi(j) < i+1$, which is clearly a contradiction.
}
Thus, for each $\pi\in D_{\mu,\nu}$, we have an algebra isomorphism
\begin{gather*}
  \varphi_{\pi^{-1}} \colon H_{\mu\cap \pi\nu}^\aff(A,z) \to H_{\pi^{-1}\mu\cap \nu}^\aff(A,z), \\
  \varphi_{\pi^{-1}}(T_w)=T_{\pi^{-1}w\pi},\quad
  \varphi_{\pi^{-1}}(f) = \pi^{-1}(f), \quad
  w \in S_{\mu\cap\pi\nu},\ f \in P_n(A).
\end{gather*}
\details{
  As noted above, $\varphi_{\pi^{-1}}$ induces an isomorphism $H_{\mu\cap \pi\nu}(A,z) \to H_{\pi^{-1}\mu\cap \nu}(A,z)$.  It is also clear that it induces an automorphism of $P_n$.  It obviously preserves the relation \cref{pna}.  Now suppose $j \ne i,i+1$.  Then $\pi^{-1} i \ne \pi^{-1} j$ and $\pi^{-1}(i)+1 = \pi^{-1}(i+1) \ne \pi^{-1} j$ by the above.  Thus
  \[
    \varphi_{\pi^{-1}} (T_i X_j)
    = T_{\pi^{-1} i} X_{\pi^{-1} j}
    \stackrel{\cref{commute}}{=} X_{\pi^{-1} j} T_{\pi^{-1} i}
    = \varphi_{\pi^{-1}} (X_j T_i),
  \]
  and so $\varphi_{\pi^{-1}}$ preserves \eqref{commute}.  Finally,
  \[
    \varphi_{\pi^{-1}} (T_i X_i T_i)
    = T_{\pi^{-1} i} X_{\pi^{-1} i} T_{\pi^{-1} i}
    \stackrel{\eqref{inverse}}{=} X_{\pi^{-1}(i) + 1}
    = X_{\pi^{-1}(i+1)}
    = \varphi_{\pi^{-1}}(X_{i+1}),
  \]
  and so $\varphi_{\pi^{-1}}$ preserves \eqref{inverse}.
}
If $N$ is a left $H_{\pi^{-1}\mu\cap \nu}^\aff(A,z)$-module, we denote by ${}^{\pi}N$ the left $H_{\mu\cap\pi\nu}^\aff(A,z)$-module with action given by
\[
  \alpha \cdot v
  = \varphi_{\pi^{-1}}(\alpha)v,\quad
  \alpha\in H_{\mu\cap\pi\nu}^\aff,\ v \in {}^\pi N=N.
\]
The inclusion $H_\mu^\aff(A,z) \subseteq H_n^\aff(A,z)$ gives induction and restriction functors
\[
  \Res^n_\mu \colon H_n^\aff(A,z)\md\to H_\mu^\aff(A,z)\md,\qquad
  \Ind^n_\mu \colon H_\mu^\aff(A,z)\md \to H_n^\aff(A,z)\md.
\]

\begin{theo}[Mackey Theorem for $H_n^\aff(A,z)$] \label{affMackey}
  Suppose that $M$ is an $H_n^\aff(A,z)$-module. Then $\Res^n_\mu \Ind^n_\nu M$ admits a filtration with subquotients evenly isomorphic to
  \[
    \Ind^\mu_{\mu\cap\pi\nu} \prescript{\pi}{}{\left(\Res^\nu_{\pi^{-1}\mu\cap\nu} M\right)},
  \]
  one for each $\pi\in D_{\mu,\nu}$.  Furthermore, the subquotients can be taken in any order refining the strong Bruhat order on $D_{\mu,\nu}$. In particular, $\Ind^\mu_{\mu\cap\nu} \Res^{\nu}_{\mu\cap\nu} M$ appears as a submodule.
\end{theo}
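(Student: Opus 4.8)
The plan is to build the filtration by first filtering $H_n^\aff(A,z)$, viewed as an $(H_\mu^\aff(A,z),H_\nu^\aff(A,z))$-bimodule, and then applying $-\otimes_{H_\nu^\aff(A,z)}M$, which yields a filtration of $\Res^n_\mu\Ind^n_\nu M=H_n^\aff(A,z)\otimes_{H_\nu^\aff(A,z)}M$. To guarantee that tensoring stays exact along the filtration, I would first record that $H_n^\aff(A,z)$ is free as a right $H_\nu^\aff(A,z)$-module: writing $w=dv$ with $d$ the minimal-length representative of the coset $wS_\nu$ and $v\in S_\nu$, one has $T_w=T_dT_v$ and $T_v\ba X^\lambda\in H_\nu^\aff(A,z)$, so by \cref{basis} the $T_d$ form a basis. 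Every term of the filtration below will likewise be free over $H_\nu^\aff(A,z)$ with free quotient, so each short exact sequence splits as right $H_\nu^\aff(A,z)$-modules and remains exact after $-\otimes_{H_\nu^\aff(A,z)}M$.

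Next I would recall the double-coset combinatorics. For $\pi\in D_{\mu,\nu}$, every $w\in S_\mu\pi S_\nu$ factors uniquely as $w=u\pi v$ with $u$ a minimal-length left coset representative of $S_{\mu\cap\pi\nu}$ in $S_\mu$, $v\in S_\nu$, and $\ell(w)=\ell(u)+\ell(\pi)+\ell(v)$, so that $T_w=T_uT_\pi T_v$ in $H_n^\aff(A,z)$. The crucial combinatorial input is that the Bruhat order refines the double-coset stratification: if $v'\le w$ in $S_n$ and $w\in S_\mu\pi S_\nu$, then $v'\in S_\mu\pi'S_\nu$ for some $\pi'\le\pi$ in the induced Bruhat order on $D_{\mu,\nu}$ (a standard, though not entirely trivial, fact about parabolic double cosets in Coxeter groups).

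With this available, for each $\pi\in D_{\mu,\nu}$ let $F_{\le\pi}$ be the $\kk$-span of those basis elements $T_w\ba X^\lambda$ of \cref{basis} with $w$ lying in a double coset $S_\mu\pi'S_\nu$, $\pi'\le\pi$. I claim each $F_{\le\pi}$ is a sub-bimodule. Multiplication on either side by $A^{\otimes n}$ or $P_n$ stays inside $F_{\le\pi}$ by \cref{twf}: the leading term keeps $w$ in its double coset, while the correction terms $T_{v'}$ with $v'<w$ lie in double cosets $\le\pi$ by the combinatorial input. Left multiplication by $T_i$ with $s_i\in S_\mu$ gives, by \cref{quadratic}, $T_iT_w\in\kk\,T_{s_iw}+\kk\,t_{i,i+1}T_w$; since $s_i\in S_\mu$ both $s_iw$ and $w$ stay in $S_\mu\pi S_\nu$, and the teleporter factor is reduced to the previous case; right multiplication by $T_j$ with $s_j\in S_\nu$ is handled analogously using \cref{demazure} to move factors past $T_j$. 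Fixing any enumeration $\pi_1=e,\pi_2,\dots,\pi_N$ of $D_{\mu,\nu}$ refining the Bruhat order and setting $F_k=\sum_{j\le k}F_{\le\pi_j}$, we obtain a bimodule filtration $0=F_0\subseteq F_1\subseteq\dots\subseteq F_N=H_n^\aff(A,z)$ with $F_k/F_{k-1}$ having $\kk$-basis $\{T_w\ba X^\lambda:w\in S_\mu\pi_kS_\nu\}$.

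Finally I would identify the subquotients. Using $T_w=T_uT_{\pi_k}T_v$ and the algebra isomorphism $\varphi_{\pi_k^{-1}}\colon H_{\mu\cap\pi_k\nu}^\aff(A,z)\to H_{\pi_k^{-1}\mu\cap\nu}^\aff(A,z)$ constructed just before the theorem, I would show that, after tensoring with $M$, the assignment $\overline{T_{u\pi_k}}\otimes m\mapsto T_u\otimes m$ (with $u$ running over minimal-length left coset representatives of $S_{\mu\cap\pi_k\nu}$ in $S_\mu$) defines an isomorphism of left $H_\mu^\aff(A,z)$-modules
\[
  (F_k/F_{k-1})\otimes_{H_\nu^\aff(A,z)}M \xrightarrow{\ \sim\ } \Ind^\mu_{\mu\cap\pi_k\nu}{}^{\pi_k}\!\left(\Res^\nu_{\pi_k^{-1}\mu\cap\nu}M\right).
\]
Both sides are, after forgetting, direct sums of copies of $M$ indexed by these $u$, so the map is a $\kk$-linear isomorphism; the content is checking $H_\mu^\aff(A,z)$-equivariance on the generators $A^{\otimes n}$, $X_1,\dots,X_n$, and $T_i$ with $s_i\in S_\mu$, which is the bookkeeping with \cref{twf}, the teleporter relations, and the defining formulas for $\varphi_{\pi_k^{-1}}$, with all error terms falling into $F_{k-1}$. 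Since these isomorphisms are parity-preserving, we obtain the stated even subquotients in any Bruhat-refining order, and as $\pi_1=e$ and $\varphi_e=\id$ we get $F_1\otimes_{H_\nu^\aff(A,z)}M=\Ind^\mu_{\mu\cap\nu}\Res^\nu_{\mu\cap\nu}M$ as a submodule. I expect the main obstacles to be the combinatorial Bruhat lemma and, in the last step, verifying that the twisted isomorphism is well defined modulo $F_{k-1}$ and intertwines the full left $H_\mu^\aff(A,z)$-action.
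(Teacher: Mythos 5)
Your proposal is correct and is essentially the paper's own approach: the paper omits the proof, citing Kleshchev's Theorems 3.5.2 and 14.5.2, whose argument is exactly your bimodule filtration of $H_n^\aff(A,z)$ by Bruhat-ordered $(S_\mu,S_\nu)$-double cosets, with freeness over $H_\nu^\aff(A,z)$ ensuring exactness and the twist $\varphi_{\pi^{-1}}$ identifying the subquotients. The key combinatorial input you flag (lower Bruhat terms land in lower double cosets) and the well-definedness of the twisted isomorphism modulo $F_{k-1}$ are precisely the points handled in loc.\ cit., and your outline treats them correctly.
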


\begin{proof}
  The proof is essentially the same as the proofs of \cite[Thm 3.5.2]{Kle05} and \cite[Thm 14.5.2]{Kle05}; hence it will be omitted.
\end{proof}

\section{Cyclotomic quotients\label{sec:cyclotomic}}

In this final section we define cyclotomic quotients of the quantum affine wreath algebras and prove some of their key properties.  These quotients are natural analogues of cyclotomic quotients of affine Hecke algebras (see \cref{eg:affHecke}).

\subsection{Definitions}

Identifying $A$ with $A \otimes 1^{\otimes n-1}$, we can naturally view $P_1(A) = A[X_1]$ as a subalgebra of $P_n(A)$.  Let
\[
  f \in Z(A)_0[X_1]
\]
be a monic polynomial of degree $d$ in $X_1$ with coefficients in $Z(A)_0$, the even part of the center of $A$. We write
\begin{equation} \label{festival}
  f = X_1^d + a_{(d-1)} X_1^{d-1} + \dotsb + a_{(1)} X_1 + a_{(0)},
\end{equation}
with $a_{(i)} \in Z(A)_0$.  We assume that $a_{(0)}$ is invertible.

We define the corresponding \emph{quantum cyclotomic wreath algebra} to be
\[
  H_n^f(A,z) := H_n^\aff(A,z)/\langle f \rangle,
\]
where $\langle f \rangle$ denotes the two-sided ideal in $H_n^\aff(A,z)$ generated by $f$.  We call $d$ the \emph{level} of $H_n^f(A,z)$.  Since $f \in H_{n,+}^\aff(A,z) \subseteq H_n^\aff(A,z)$, we can also define
\[
  H_{n,+}^f(A,z) := H_{n,+}^\aff(A,z)/\langle f \rangle_+,
\]
where $\langle f \rangle_+$ denotes the two-sided ideal in $H_{n,+}^\aff(A,z)$ generated by $f$.

Let $f_1:=f$ and, for $2\leq i\leq n$, define
\begin{equation} \label{fi-def}
  f_{i} := T_{i-1} \dotsm T_2T_1 f_1 T_1T_2 \dotsm T_{i-1}.
\end{equation}
It follows immediately that
\begin{equation} \label{drop}
  f_i \text{ commutes with all elements of } A^{\otimes n} \text{ for all } 1 \le i \le n.
\end{equation}

\begin{lem}\label{lem:Xid}
  For $1\leq i\leq n-1$, we have
  \[
    f_i-X_i^d \in H_i(A,z) + \sum_{e=1}^{d-1}\kk[X_1,\dotsc, X_{i-1}]_{\leq d-e} X_{i}^e H_i(A,z).
  \]
  where $\kk[X_1,\dotsc, X_{i-1}]_{\leq d-e}$ denotes the space of polynomials of degree less than or equal to $d-e$.
\end{lem}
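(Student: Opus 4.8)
The plan is to induct on $i$, using the recursion $f_i = T_{i-1} f_{i-1} T_{i-1}$ of \cref{fi-def}; the case $i = 1$ is immediate, and the inductive step is a controlled computation resting on \cref{TiXd} and on the commutation formula \cref{demazure}.

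\emph{Base case.} By \cref{festival}, $f_1 - X_1^d = \sum_{e=0}^{d-1} a_{(e)} X_1^e$. Since each $a_{(e)} \in Z(A)_0$ commutes with $X_1$ by \cref{pna}, we may rewrite $a_{(e)} X_1^e = X_1^e a_{(e)}$; then the $e = 0$ term lies in $H_1(A,z)$, and each term with $1 \le e \le d-1$ lies in $\kk X_1^e H_1(A,z)$, which is the $e$-term of the sum on the right-hand side (the polynomial ring in the variables $X_1,\dotsc,X_0$ being just $\kk$), as required.

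\emph{Inductive step.} Fix $2 \le i \le n-1$ and write $f_{i-1} = X_{i-1}^d + r$, where by the inductive hypothesis $r \in H_{i-1}(A,z) + \sum_{e=1}^{d-1} \kk[X_1,\dotsc,X_{i-2}]_{\le d-e} X_{i-1}^e H_{i-1}(A,z)$. Then
\[
  f_i - X_i^d = \bigl( T_{i-1} X_{i-1}^d T_{i-1} - X_i^d \bigr) + T_{i-1} r T_{i-1}.
\]
The first bracket is handled by \cref{TiXd}: it equals $-z\, t_{i-1,i} \sum_{k=1}^{d-1} X_{i-1}^k X_i^{d-k} T_{i-1}$, and since $t_{i-1,i} \in A^{\otimes n}$ commutes with every $X_j$ by \cref{pna}, reindexing via $e = d-k$ places it in $\sum_{e=1}^{d-1} \kk[X_1,\dotsc,X_{i-1}]_{\le d-e} X_i^e H_i(A,z)$, because $X_{i-1}^{d-e}$ has degree $d-e$ and $t_{i-1,i} T_{i-1} \in H_i(A,z)$. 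For $T_{i-1} r T_{i-1}$, by $\kk$-linearity it suffices to treat one summand of $r$. A summand lying in $H_{i-1}(A,z)$ stays in $H_i(A,z)$ after conjugation by $T_{i-1}$. For a summand $p X_{i-1}^e h$ with $p \in \kk[X_1,\dotsc,X_{i-2}]_{\le d-e}$, $h \in H_{i-1}(A,z)$ and $1 \le e \le d-1$, observe that $T_{i-1}$ commutes with $p$ by \cref{commute}, while \cref{demazure} and \cref{deltamon} give $T_{i-1} X_{i-1}^e = X_i^e T_{i-1} - z\, t_{i-1,i} \sum_{k=0}^{e-1} X_{i-1}^k X_i^{e-k}$; pushing $t_{i-1,i}$ past the $X$'s with \cref{pna} and using that $T_{i-1} h T_{i-1}$ and $t_{i-1,i} h T_{i-1}$ lie in $H_i(A,z)$, one obtains
\[
  T_{i-1} (p X_{i-1}^e h) T_{i-1} = p\, X_i^e\, (T_{i-1} h T_{i-1}) - z \sum_{k=0}^{e-1} (p X_{i-1}^k)\, X_i^{e-k}\, (t_{i-1,i} h T_{i-1}).
\]
The first summand on the right lies in $\kk[X_1,\dotsc,X_{i-1}]_{\le d-e} X_i^e H_i(A,z)$; in each remaining summand the power of $X_i$ is $e - k \in \{1,\dotsc,e\}$ and the coefficient $p X_{i-1}^k$ has degree at most $(d-e) + k = d - (e-k)$, so it lies in $\kk[X_1,\dotsc,X_{i-1}]_{\le d-(e-k)} X_i^{e-k} H_i(A,z)$. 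As the right-hand side of the asserted inclusion is a $\kk$-submodule, the induction closes.

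\emph{Expected main difficulty.} I expect the one delicate point to be the degree count at the end of the inductive step: the coefficient of $X_i^{e-k}$ must have degree $\le d - (e-k)$, and this holds only because $(d-e) + k$ equals $d - (e-k)$ exactly, so the statement is tight. Everything else is routine manipulation with \cref{pna}, \cref{commute}, \cref{demazure}, and \cref{TiXd}, together with the fact that teleporters and products of the $T_j$ lie in the relevant parabolic $H_i(A,z)$.
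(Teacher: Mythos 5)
Your proof is correct and takes essentially the same route as the paper: induction on $i$ via the recursion \cref{fi-def}, with \cref{TiXd} disposing of $T_{i-1}X_{i-1}^d T_{i-1}-X_i^d$ and \cref{demazure} together with \cref{deltamon} handling the conjugation of the lower-degree terms coming from the inductive hypothesis. The only difference is cosmetic: you spell out the degree bookkeeping for the final inclusion, which the paper compresses into a terse appeal to \cref{TiXd}.
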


\begin{proof}
  We prove this by induction. For $i=1$, the result is immediate. Now assuming the result true for all $1\leq j\leq i$, we have
  \begin{align*}
    f_{i+1}-X_{i+1}^d
    \ &\stackrel{\mathclap{\cref{TiXd}}}{=}\ \, T_i(f_i-X_i^d) T_i - zt_{i,i+1} \sum_{k=1}^{d-1} X_i^k X_{i+1}^{d-k} T_i \\
    &\in T_i H_i(A,z) T_i + T_i \sum_{e=1}^{d-1} \kk[X_1,\dotsc, X_{i-1}]_{\leq d-e}X_i^e H_i(A,z) T_i - z\sum_{k=1}^{d-1}X_i^kX_{i+1}^{d-k}t_{i,i+1}T_i \\
    &\subseteq H_{i+1}(A,z) + \sum_{e=1}^{d-1}\kk[X_1,\dotsc, X_i]_{\leq d-e} X_{i+1}^e H_{i+1}(A,z),
  \end{align*}
  where, for the final inclusion, we used \cref{TiXd}.
\end{proof}

Consider the algebra homomorphism given by the composition
\begin{equation}\label{eq:compmaps}
  \eta \colon H_{n,+}^\aff(A,z) \hookrightarrow H_n^\aff(A,z) \twoheadrightarrow H_n^f(A,z),
\end{equation}
where the first map is the natural inclusion and the second is the projection.

\begin{lem}
  The map $\eta$ is surjective.
\end{lem}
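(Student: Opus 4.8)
The plan is to reduce the claim to showing that the image in $H_n^f(A,z)$ of each $X_j^{-1}$ lies in $\im\eta$. Indeed, $H_n^\aff(A,z)$ is generated as a $\kk$-algebra by $A^{\otimes n}$, the $T_i$, and the $X_j^{\pm 1}$ (immediate from its definition and that of $H_n(A,z)$), so the same is true of its quotient $H_n^f(A,z)$; and since $H_{n,+}^\aff(A,z)$ contains $H_n(A,z)$ (hence $A^{\otimes n}$ and all the $T_i$) together with $\kk[X_1,\dotsc,X_n]$, the subalgebra $\im\eta$ already contains the images of $A^{\otimes n}$, of the $T_i$, and of the $X_j$. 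Thus it suffices to handle the $X_j^{-1}$.

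First I would treat $X_1^{-1}$ via the cyclotomic relation. In $H_n^f(A,z)$ the element $f$ vanishes, so by \cref{festival} and \cref{pna} (which lets the coefficients $a_{(i)} \in Z(A)_0$ be moved past $X_1$) we obtain
\[
  X_1 \big( X_1^{d-1} + a_{(d-1)} X_1^{d-2} + \dotsb + a_{(1)} \big) = -a_{(0)}.
\]
Since $a_{(0)}$ is invertible and commutes with $X_1$ by \cref{pna}, multiplying by $a_{(0)}^{-1}$ gives $X_1^{-1} = -a_{(0)}^{-1}\big( X_1^{d-1} + a_{(d-1)} X_1^{d-2} + \dotsb + a_{(1)} \big)$ in $H_n^f(A,z)$, and the right-hand side lies in $A[X_1] \subseteq H_{n,+}^\aff(A,z)$, hence in $\im\eta$. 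Then, for $2 \le j \le n$, I would induct on $j$: relation \cref{inverse} gives $X_j = T_{j-1} X_{j-1} T_{j-1}$, so $X_j^{-1} = T_{j-1}^{-1} X_{j-1}^{-1} T_{j-1}^{-1}$, and the Frobenius skein relation \cref{skein} shows $T_{j-1}^{-1} = T_{j-1} - z t_{j-1,j} \in H_n(A,z) \subseteq H_{n,+}^\aff(A,z)$; combined with $X_{j-1}^{-1} \in \im\eta$ this yields $X_j^{-1} \in \im\eta$, completing the induction and hence the proof.

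I do not anticipate a genuine obstacle. The one step where the hypotheses do real work is the inversion of $X_1$: one needs $a_{(0)}$ invertible so that $X_1^{-1}$ becomes an honest polynomial expression in $X_1$, and one uses that $a_{(0)} \in Z(A)_0$ so that it may be passed freely past $X_1$. Everything else is a formal consequence of the defining relations \cref{inverse} and \cref{skein}.
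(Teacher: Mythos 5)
Your proposal is correct and follows essentially the same route as the paper: invert $X_1$ using the cyclotomic relation together with the invertibility and centrality of $a_{(0)}$, then obtain $X_{j}^{-1}$ for $j\ge 2$ inductively from $X_j^{-1}=T_{j-1}^{-1}X_{j-1}^{-1}T_{j-1}^{-1}$ and the skein relation \cref{skein}. The only difference is cosmetic (you make explicit the reduction to the generators $X_j^{-1}$ and the fact that $T_{j-1}^{-1}\in H_n(A,z)$, which the paper leaves implicit).
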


\begin{proof}
  Notice that
  \[
    X_1^{-1}
    = a_{(0)}^{-1} X_1^{-1} f - a_{(0)}^{-1} \left( X_1^{d-1} + a_{(d-1)} X_1^{d-2} + \dotsb + a_{(2)} X_1 + a_{(1)} \right),
  \]
  and so
  \[
    \eta \left( - a_{(0)}^{-1} \left( X_1^{d-1} + a_{(d-1)} X_1^{d-2} + \dotsb + a_{(2)} X_1 + a_{(1)} \right) \right)
    = X_1^{-1} \in H^f_n(A,z).
  \]
  It then follows by induction that $X_{i+1}^{-1} = T_i^{-1} X^{-1}_{i} T_i^{-1} \in \eta\left(H^\aff_{n,+}(A,z)\right)$ for all $1\leq i\leq n-1$, which gives the result.
\end{proof}

\begin{lem} \label{coffee}
  We have
  \[
    H_n(A,z) f H_n(A,z)
    = \sum_{\substack{1 \le i \le n \\ w \in S_n}} A^{\otimes n} f_i T_w
    = \sum_{i=1}^n f_i H_n(A,z).
  \]
\end{lem}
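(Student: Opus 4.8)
The plan is to show that $I := \sum_{i=1}^{n} f_i H_n(A,z)$ is a two-sided ideal of $H_n(A,z)$, regarded as a subalgebra of $H_n^\aff(A,z)$, which contains $f$ and is contained in the two-sided ideal $H_n(A,z) f H_n(A,z)$ generated by $f$; this yields the second equality, and the first will then follow quickly from \eqref{drop}. The key preliminary step is to understand how the $T_j$ interact with the $f_i$. From \eqref{fi-def} we have $f_{k+1} = T_k f_k T_k$ for $1 \le k \le n-1$, so \eqref{quadratic} and \eqref{skein}, combined with \eqref{drop} to move $t_{j,j+1} \in A^{\otimes n}$ past $f_{j+1}$, yield
\begin{equation*}
  T_j f_j = f_{j+1}(T_j - z t_{j,j+1}) \in f_{j+1} H_n(A,z),
  \qquad
  T_j f_{j+1} = f_j T_j + z f_{j+1} t_{j,j+1} \in f_j H_n(A,z) + f_{j+1} H_n(A,z).
\end{equation*}

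In addition, I would prove by induction on $k$ that $T_j f_k = f_k T_j$ whenever $k \notin \{j, j+1\}$. The base case $k = 1$ holds because the hypothesis then forces $j \ge 2$, so $T_j$ commutes with $X_1$ by \eqref{commute} and with elements of $A$ in the first tensor slot by \eqref{TF}, hence with $f_1 \in A[X_1]$. For the inductive step one writes $f_{k+1} = T_k f_k T_k$; since $k+1 \notin \{j,j+1\}$ forces $j \notin \{k,k+1\}$, either $|j-k| \ge 2$, in which case $T_j$ commutes with $T_k$ by \eqref{farcomm} and with $f_k$ by the inductive hypothesis, or $j = k-1$, in which case two applications of the braid relation \eqref{braid} together with the inductive identity $T_k f_{k-1} = f_{k-1} T_k$ give $T_{k-1} f_{k+1} = T_{k-1} T_k T_{k-1} f_{k-1} T_{k-1} T_k = T_k T_{k-1} f_{k-1} T_{k-1} T_k T_{k-1} = f_{k+1} T_{k-1}$. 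This braid computation is the only mildly delicate point in the whole argument; everything else is formal.

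Granting these relations, $I$ is visibly a right ideal, and it is a left ideal as well: for $\ba \in A^{\otimes n}$ we have $\ba f_i H_n(A,z) = f_i \ba H_n(A,z) \subseteq f_i H_n(A,z)$ by \eqref{drop}, while the three displayed identities show $T_j f_i H_n(A,z) \subseteq I$ in each of the cases $i \notin \{j,j+1\}$, $i = j$, and $i = j+1$; since $A^{\otimes n}$ together with the $T_j$ generate $H_n(A,z)$, it follows that $H_n(A,z)\, I \subseteq I$. As $f = f_1 \in I$, we conclude $H_n(A,z) f H_n(A,z) \subseteq I$, and the reverse inclusion is immediate since \eqref{fi-def} writes $f_i = (T_{i-1} \dotsm T_1) f (T_1 \dotsm T_{i-1}) \in H_n(A,z) f H_n(A,z)$, whence $f_i H_n(A,z) \subseteq H_n(A,z) f H_n(A,z)$ for every $i$. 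Finally, using that $H_n(A,z) = \sum_{w \in S_n} A^{\otimes n} T_w$ (a consequence of \eqref{TF}, \eqref{quadratic}, \eqref{farcomm}, \eqref{braid}, and \cref{basis}) together with \eqref{drop} once more, we obtain $f_i H_n(A,z) = \sum_w f_i A^{\otimes n} T_w = \sum_w A^{\otimes n} f_i T_w$, and summing over $i$ gives the first equality in the statement.
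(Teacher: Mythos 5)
Your proof is correct, but it takes a genuinely different route from the paper. The paper's argument is a direct chain of equalities: it decomposes $H_n(A,z) = \sum_{i}\sum_{x(1)=1} A^{\otimes n} T_{i-1}\dotsm T_1 T_x$ using minimal length representatives for cosets of the stabilizer of $1$, commutes $T_x$ past $f$ (possible exactly because $x(1)=1$, so $T_x$ involves only $T_2,\dotsc,T_{n-1}$, which commute with $A[X_1]$ by \cref{commute} and \cref{TF}), and then uses invertibility of $T_1\dotsm T_{i-1}$ to rewrite everything as $\sum_{i,w} A^{\otimes n} f_i T_w$, with the last equality of the lemma following from \cref{drop}. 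You instead show that $I=\sum_i f_i H_n(A,z)$ is stable under left multiplication by the generators $A^{\otimes n}$ and $T_j$ of $H_n(A,z)$, contains $f=f_1$, and is contained in $H_n(A,z) f H_n(A,z)$ by \cref{fi-def}, which forces the equality; the substantive inputs are your commutation relation $T_j f_k = f_k T_j$ for $k\notin\{j,j+1\}$ (proved by induction using \cref{braid}) together with $T_jf_j = f_{j+1}(T_j - zt_{j,j+1})$ and $T_j f_{j+1} = f_jT_j + zf_{j+1}t_{j,j+1}$, which are correct consequences of \cref{skein}, \cref{quadratic} and \cref{drop}. The paper's computation is shorter and needs no induction; your ideal-theoretic argument has the advantage that the commutation statement you prove along the way subsumes (the $T_i$-part of) \cref{lem:HnfHn}, which the paper establishes separately later, so your route would let that lemma be quoted rather than re-proved. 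One cosmetic point: in the case $j=k-1$ of your induction you invoke the statement at index $k-1$ as well as at $k$, so the induction should be phrased as complete (strong) induction on $k$; as written this is harmless since the case $j=k-1$ does not occur when $k=1$.
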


\begin{proof}
  We have
  \begin{align*}
    H_n(A,z) f H_n(A,z)
    &= \sum_{v \in S_n} H_n(A,z) f A^{\otimes n} T_v \\
    &= \sum_{v \in S_n} H_n(A,z) f T_v & \text{(by \cref{drop})} \\
    &= \sum_{i=1}^n \sum_{\substack{x,v \in S_n \\ x(1)=1}} A^{\otimes n} T_{i-1} \dotsm T_1 T_x f T_v \\
    &= \sum_{i=1}^n \sum_{\substack{x,v \in S_n \\ x(1)=1}} A^{\otimes n} T_{i-1} \dotsm T_1 f T_x T_v \\
    &= \sum_{i=1}^n \sum_{v \in S_n} A^{\otimes n} T_{i-1}\dotsm T_1 f T_v \\
    &= \sum_{i=1}^n \sum_{u \in S_n} A^{\otimes n} T_{i-1}\dotsm T_1 f T_1 \dotsm T_{i-1} T_u \\
    &= \sum_{i=1}^n \sum_{u \in S_n} A^{\otimes n} f_i T_u,
  \end{align*}
  where the sixth equality follows from the fact that the $T_j$ are invertible.  The final equality in the statement of the lemma then follows from \cref{drop}.
\end{proof}

\begin{lem} \label{polar}
  For all $1 \le i \le n$, we have
  \[
    \big( H_n(A,z) f H_n(A,z) \big) \cap \big( X_i H_{n,+}^\aff(A,z) \big) = 0.
  \]
\end{lem}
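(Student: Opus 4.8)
The plan is to combine the monomial basis $\{\ba X^\lambda T_w : \ba \in B^{\otimes n},\ \lambda \in \N^n,\ w \in S_n\}$ of $H_{n,+}^\aff(A,z)$ from \cref{currbasis} with \cref{coffee}. By \cref{coffee}, every element of $H_n(A,z) f H_n(A,z)$ has the form $\alpha = \sum_{j=1}^n \sum_{w \in S_n} \ba_{j,w} f_j T_w$ with $\ba_{j,w} \in A^{\otimes n}$; and, since $X_i \cdot \ba X^\lambda T_w = \ba X^{\lambda + e_i} T_w$ by \cref{pna}, the subspace $X_i H_{n,+}^\aff(A,z)$ is precisely the span of the basis elements $\ba X^\lambda T_w$ with $\lambda_i \ge 1$. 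So it suffices to show: if every basis monomial occurring in such an $\alpha$ has $X_i$-exponent $\ge 1$, then every $\ba_{j,w}$ vanishes. (We may assume $d \ge 1$; if $d = 0$ then $f$ is a unit and the statement is trivial.)

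First I would record two facts about each $f_j$. The first is from \cref{lem:Xid} (whose proof gives the case $j = n$ as well): every basis monomial $\ba X^\mu T_v$ occurring in $f_j$ satisfies $\mu_k = 0$ for $k > j$ and $\mu_j \le d$, with $\mu_j = d$ occurring only for the monomial $X_j^d$. The second is obtained directly: writing $f = a_{(0)} + X_1 g$ with $g \in A^{\otimes n}[X_1]$ (here $d \ge 1$ is used), we get $f_j = c_j + T_{j-1}\dotsm T_1 X_1 g\, T_1 \dotsm T_{j-1}$ where $c_j := T_{j-1}\dotsm T_1\, a_{(0)}\, T_1 \dotsm T_{j-1} \in H_n(A,z)$; repeated use of $T_k X_k = X_{k+1}T_k^{-1}$ (from \cref{inverse,skein}) shows $T_{j-1}\dotsm T_1 X_1 H_{n,+}^\aff(A,z) T_1 \dotsm T_{j-1} \subseteq X_j H_{n,+}^\aff(A,z)$, so the $\mu = 0$ part of $f_j$ is precisely $c_j$ and every remaining monomial of $f_j$ has $\mu_j \ge 1$. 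Here $c_j$ is invertible in $H_n(A,z)$ (as $a_{(0)}$ and the $T_k$ are), and $c_j$ commutes with $A^{\otimes n}$: indeed $f_j$ does by \cref{drop}, and comparing the $\mu = 0$ parts of $f_j \bc = \bc f_j$ for $\bc \in A^{\otimes n}$ transfers this to $c_j$.

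The key point is that right multiplication by any $T_w$ and left multiplication by any element of $A^{\otimes n}$ leave the $X$-exponent vectors of basis monomials unchanged (using \cref{pna} and the fact that $T_u T_w \in \sum_v A^{\otimes n} T_v$). Combined with the first fact, this shows that among all the summands $\ba_{j',w'} f_{j'} T_{w'}$ of $\alpha$, only $\ba_{j,w} f_j T_w$ contributes basis monomials with $X$-exponent vector $d e_j$, and it contributes exactly $\ba_{j,w} X^{d e_j} T_w$. Hence if $\alpha \ne 0$ then some $\ba_{j,w} \ne 0$; if such a pair can be chosen with $j \ne i$, the monomials of $\ba_{j,w} X^{d e_j} T_w$ occur in $\alpha$ and have $X_i$-exponent $0$, contradicting the hypothesis. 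Therefore $\ba_{j,w} = 0$ whenever $j \ne i$, and $\alpha = \sum_w \ba_{i,w} f_i T_w$. Using the second fact, $\alpha = \sum_w \ba_{i,w} c_i T_w + \sum_w \ba_{i,w}(f_i - c_i) T_w$; the first sum lies in $H_n(A,z)$ (all monomials have $\mu = 0$) while the second has all monomials with $X_i$-exponent $\ge 1$, so the hypothesis forces $\sum_w \ba_{i,w} c_i T_w = 0$. Since $c_i$ is invertible and commutes with $A^{\otimes n}$, this yields $\sum_w \ba_{i,w} T_w = 0$, whence $\ba_{i,w} = 0$ for all $w$ by the basis of $H_n(A,z)$. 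Thus $\alpha = 0$, as required.

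I expect the main obstacle to be the bookkeeping in the third paragraph: establishing the first fact and, from it, the statement that no cancellation can occur at the top exponent vectors $d e_j$. This is exactly what lets the argument avoid proving that $\sum_{j,w} A^{\otimes n} f_j T_w$ is a direct sum, and reduces everything to the invertibility of the ``constant term'' $c_i$ of $f_i$, which is where the hypothesis that $a_{(0)}$ is invertible enters.
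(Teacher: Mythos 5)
Your proposal is correct and follows essentially the same route as the paper's proof: reduce via \cref{coffee} to an expression $\sum_{j,w}\ba_{j,w}f_jT_w$, use \cref{lem:Xid} together with the basis of \cref{basis} to force $\ba_{j,w}=0$ for $j\neq i$, and then extract the degree-zero (constant) term, which is an invertible conjugate of $a_{(0)}$ times $\sum_w \ba_{i,w}T_w$, to conclude $\ba_{i,w}=0$. The only difference is expository: you spell out the no-cancellation argument at the top exponent $d e_j$ and package the constant term as $c_i=T_{i-1}\dotsm T_1 a_{(0)}T_1\dotsm T_{i-1}$, where the paper instead invokes \cref{TiXd} and moves $a_{(0)}$ through the $T$'s via \cref{TF}.
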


\begin{proof}
  By \cref{coffee}, any element of the intersection is of the form
  \[
    \sum_{j,w} \ba_{j,w} f_j T_w \in X_i H_{n,+}^\aff(A,z)
  \]
  for some $\ba_{j,w} \in A^{\otimes n}$.  It follows from \cref{basis,lem:Xid} that $\ba_{j,w} = 0$ whenever $j \ne i$.  Then, by \cref{TiXd}, the constant term (i.e.\ the term of degree zero in the $X_j$) of $\sum_w \ba_{i,w} f_i T_w$, which must equal zero, is
  \[
    0 = \sum_w \ba_{i,w} T_{i-1} \dotsm T_1 a_{(0)} T_1 \dotsm T_{i-1} T_w
    \stackrel{\cref{TF}}{=} T_{i-1} \dotsm T_1 T_1 \dotsm T_{i-1} \sum_w \ba_{i,w} s_{1,i}(a_{(0)}) T_w.
  \]
  Since the $T_j$ are invertible, as is $a_{(0)}$, it follows from \cref{basis} that $\ba_{i,w} = 0$ for all $w \in S_n$.
\end{proof}

\begin{prop}\label{prop:isoplus}
  The map $\eta$ induces an isomorphism
  \[
    H_{n,+}^f(A,z) \cong H_n^f(A,z).
  \]
\end{prop}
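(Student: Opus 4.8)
The plan is to prove that the homomorphism $\bar\eta\colon H_{n,+}^f(A,z)\to H_n^f(A,z)$ induced by $\eta$ is an isomorphism by constructing an explicit two-sided inverse. Since $\eta$ is an algebra homomorphism with $\eta(f)=0$, it kills the two-sided ideal $\langle f\rangle_+$ and hence descends to $\bar\eta$; and $\bar\eta$ is surjective because $\eta$ is, as shown above. So it remains to construct a homomorphism $\bar\theta\colon H_n^f(A,z)\to H_{n,+}^f(A,z)$ that is inverse to $\bar\eta$.

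The key point is that the image of $X_1$ in $H_{n,+}^f(A,z)$ is invertible. Indeed, reducing the relation $f=0$ and using that $a_{(0)}\in Z(A)_0$ is invertible --- precisely the computation in the proof that $\eta$ is surjective --- the element $-a_{(0)}^{-1}\big(X_1^{d-1}+a_{(d-1)}X_1^{d-2}+\dotsb+a_{(1)}\big)$ is a two-sided inverse of $X_1$ in $H_{n,+}^f(A,z)$. Since relation \cref{inverse} holds there and the $T_i$ are invertible, induction on $i$ shows that every $X_i$ is invertible in $H_{n,+}^f(A,z)$, with inverse $T_{i-1}^{-1}X_{i-1}^{-1}T_{i-1}^{-1}$. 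Using the presentation of $H_n^\aff(A,z)$ as a quotient of $\kk[X_1^{\pm1},\dotsc,X_n^{\pm1}]\star H_n(A,z)$, I would then define an algebra homomorphism $\theta\colon H_n^\aff(A,z)\to H_{n,+}^f(A,z)$ by $\ba\mapsto\ba$, $T_j\mapsto T_j$, $X_i^{\pm1}\mapsto X_i^{\pm1}$: a homomorphism out of $\kk[X_1^{\pm1},\dotsc,X_n^{\pm1}]$ is just a choice of commuting units, which we have; a homomorphism out of $H_n(A,z)$ is the composite $H_n(A,z)\hookrightarrow H_{n,+}^\aff(A,z)\twoheadrightarrow H_{n,+}^f(A,z)$; and the remaining relations \cref{commute,inverse,pna} hold in $H_{n,+}^f(A,z)$ because they already hold in $H_{n,+}^\aff(A,z)$. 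By construction $\theta$ restricts on $H_{n,+}^\aff(A,z)$ to the canonical projection, so $\theta(f)=0$ and $\theta$ factors through a homomorphism $\bar\theta\colon H_n^f(A,z)\to H_{n,+}^f(A,z)$. Finally, $\bar\theta$ and $\bar\eta$ are mutually inverse, since both composites fix the generators $\ba$, $T_j$, and $X_i$, while they send $X_i^{-1}$ to $X_i^{-1}$ by uniqueness of inverses. Hence $\bar\eta$ is an isomorphism.

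The one place needing care is the well-definedness of $\theta$, i.e.\ checking that the defining relations of $H_n^\aff(A,z)$ are respected. This turns out to be mild: no defining relation mixes an $X_i^{-1}$ with a $T_j$ or with $A^{\otimes n}$, so every relation is either internal to $\kk[X_1^{\pm1},\dotsc,X_n^{\pm1}]$ or to $H_n(A,z)$, or is one of \cref{commute,inverse,pna}, all of which descend from $H_{n,+}^\aff(A,z)$. A more computational alternative would be to show directly that $H_{n,+}^\aff(A,z)\cap\langle f\rangle=\langle f\rangle_+$, analyzing those elements of $\langle f\rangle$ that lie in the polynomial subalgebra via \cref{coffee,polar,lem:Xid}; this is presumably the role of those lemmas, but the inverse-map argument above seems shorter.
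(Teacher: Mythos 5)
Your argument is correct, and it takes a genuinely different route from the paper. The paper proves the statement by showing directly that $\ker\eta=\langle f\rangle_+$, i.e.\ that $\langle f\rangle\cap H_{n,+}^\aff(A,z)=\langle f\rangle_+$: one expands a general element of $\langle f\rangle$ in the basis of \cref{basis}, picks a minimal Laurent exponent $\lambda$, and rules out negative entries using \cref{polar} (which itself rests on \cref{coffee}, \cref{lem:Xid} and the basis theorem). You instead exploit the hypothesis that $a_{(0)}$ is invertible to show that the image of $X_1$, and hence by \cref{inverse} and invertibility of the $T_i$ every $X_i$, is a unit in $H_{n,+}^f(A,z)$; since the defining relations of $H_n^\aff(A,z)$ beyond those internal to $\kk[X_1^{\pm1},\dotsc,X_n^{\pm1}]$ and to $H_n(A,z)$ are exactly \cref{commute,inverse,pna}, which involve no $X_i^{-1}$ and already hold in the quotient, the universal property of the free product yields $\theta\colon H_n^\aff(A,z)\to H_{n,+}^f(A,z)$, which kills $\langle f\rangle$ and descends to a two-sided inverse $\bar\theta$ of $\bar\eta$ (the action on $X_i^{-1}$ being forced by uniqueness of inverses). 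Your localization-style argument is shorter and renders \cref{polar} unnecessary for this proposition; the paper's ideal-theoretic route has the advantage of working hand in glove with the subsequent analysis of $\langle f\rangle_+$ (\cref{lem:fplus,lem:fplus2}) that feeds into the cyclotomic basis theorem, and both approaches use the invertibility of $a_{(0)}$ at the crucial point (the paper inside \cref{polar}, you when inverting $X_1$).
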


\begin{proof}
  We need to show that $\langle f \rangle_+=\eta^{-1}(\langle f \rangle)$, which is to say that
  \[
    \langle f \rangle\cap H_{n,+}^\aff(A,z)=\langle f \rangle_+.
  \]
  Clearly $\langle f \rangle_+\subseteq \langle f \rangle\cap H_{n,+}^\aff(A,z)$; so we need to show the other inclusion.

  Define a partial order on $\Z^n$ by $\lambda\leq \mu$ if $\lambda_i\leq \mu_i$ for all $1\leq i\leq n$. Using \cref{snowy,TF}, any element of the form
  \[
    \left( \sum_{\substack{\lambda \in \Z^n \\ w \in S_n}} \ba_{\lambda,w} X^{\lambda} T_w \right) f \left( \sum_{\substack{\mu \in \Z^n \\ v \in S_n}} \bb_{\mu,v} X^\mu T_v\right)
    = \sum_{\lambda,\mu,w,v} \ba_{\lambda,w} X^{\lambda}T_w  \bb_{\mu,v}X^\mu fT_v
  \]
  can be written in the form $\sum_{\lambda,w,v} \bc_{\lambda,w,v}X^\lambda T_w f T_v$.  Thus, it suffices to show that if
  \begin{equation}\label{laTfT}
    \sum_{\lambda,w,v} \bc_{\lambda,w,v}X^\lambda T_w f T_v \in H_{n,+}^\aff(A,z),
  \end{equation}
  then $\lambda \geq 0$ whenever $\bc_{\lambda,w,v} \ne 0$ for some $w,v \in S_n$.

  Take a minimal element $\lambda = (\lambda_1,\dotsc,\lambda_n)$ appearing in \cref{laTfT} with $\bc_{\lambda,w,v} \ne 0$.  Suppose, towards a contradiction, that $\lambda_i < 0$ for some $1 \le i \le n$.   Since $f \in A[X_1]$, by \cref{lem:passthrough,polyinv} we have
  \[
    T_w f T_v \in \kk[X_1,\dotsc,X_n] H_n(A,z)
  \]
  for all $w,v$.  Thus, by \cref{basis} and the minimality of $\lambda$, we must  have
  \[
    X^\lambda \sum_{w,v} \bc_{\lambda,w,v} T_w f T_v \in H_{n,+}^\aff(A,z).
  \]
  Since $\lambda_i < 0$, \cref{basis} implies that $\sum_{w,v} \bc_{\lambda,w,v} T_w f T_v \in X_i H_{n,+}^\aff(A,z)$.
  But this is impossible by \cref{polar}.
\end{proof}
\subsection{Basis Theorem}

We now prove a basis theorem for $H^f_{n,+}(A,z)$, which also gives a basis theorem for $H^f_n(A,z)$ in light of \cref{prop:isoplus}.  We follow the methods of \cite[\S 7.5]{Kle05} and \cite[\S 6.3]{Sav17}.

For $I = \{i_1 < \dotsb < i_k\}\subseteq \{1,\dotsc,n\}$, let
\[
  f_I = f_{i_1} f_{i_2} \dotsm f_{i_k} \in H_{n,+}^\aff(A,z).
\]
We also define
\begin{gather*}
  \Omega_n := \{(\lambda, I) : I \subseteq \{1,\dotsc,n\},\ \lambda \in \N^n,\ \lambda_i < d \text{ whenever } i \notin I\},
  \\
  \Omega_n^+ := \{(\lambda,I) \in \Omega_n : I \neq \varnothing \}.
\end{gather*}

\begin{lem}\label{lem:rightbasis}
  We have that $H_{n,+}^\aff(A,z)$ is a free right $H_n(A,z)$-module with basis
  \[
    \{ X^\lambda f_I : (\lambda,I) \in \Omega_n\}.
  \]
\end{lem}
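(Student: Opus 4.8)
The plan is to compare the proposed family with the ``naive'' basis coming from \cref{currbasis} by a unitriangularity argument.

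\emph{Setup.}  By \cref{currbasis} together with \cref{pna} (which lets one pull each monomial $X^\lambda$ to the left of the $A^{\otimes n}$-factor), the set $\{X^\mu : \mu\in\N^n\}$ is a basis of $H_{n,+}^\aff(A,z)$ as a free right $H_n(A,z)$-module, i.e.\ $H_{n,+}^\aff(A,z)=\bigoplus_{\mu\in\N^n}X^\mu H_n(A,z)$.  For $I\subseteq\{1,\dots,n\}$ write $d_I\in\N^n$ for the tuple with $i$th entry $d$ if $i\in I$ and $0$ otherwise.  Then $\iota\colon\Omega_n\to\N^n$, $\iota(\lambda,I)=\lambda+d_I$, is a bijection: from $\mu\in\N^n$ one recovers $I=\{i:\mu_i\ge d\}$ and $\lambda=\mu-d_I$, and $(\lambda,I)\in\Omega_n$ is exactly the condition $\mu_i<d$ for $i\notin I$.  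Equip $\N^n$ with the well-order $\prec$ defined by $\nu\prec\mu$ if $|\nu|<|\mu|$, or $|\nu|=|\mu|$ and $\nu$ is strictly smaller than $\mu$ in reverse-lexicographic order (comparing coordinates from index $n$ down to $1$), where $|\mu|=\mu_1+\dots+\mu_n$.

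\emph{Unitriangularity.}  The heart of the proof is the identity
\[
  X^\lambda f_I \;=\; X^{\iota(\lambda,I)}\;+\;\sum_{\mu'\prec\iota(\lambda,I)}X^{\mu'}h_{\mu'},\qquad h_{\mu'}\in H_n(A,z),
\]
for every $(\lambda,I)\in\Omega_n$.  Since left multiplication by $X^\lambda$ shifts all exponents by $\lambda$ and preserves both the degree and the reverse-lex comparisons, it suffices to treat $\lambda=0$.  I would prove, by induction on $|I|$, the refined claim: writing $I=\{i_1<\dots<i_k\}$, the element $f_I-X^{d_I}$ lies in the $\kk$-span of the $X^\nu h$ with $h\in H_{i_k}(A,z)$, $\nu_l=0$ for $l>i_k$, $|\nu|\le d|I|$, and $\nu$ strictly reverse-lex smaller than $d_I$; since $|d_I|=d|I|$, each such $\nu$ satisfies $\nu\prec d_I$.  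The base case $|I|=1$ is exactly \cref{lem:Xid} (whose inductive proof applies verbatim for $i=n$ as well).  For the step, write $f_I=\big(f_{i_1}\cdots f_{i_{k-1}}\big)f_{i_k}$, expand $f_{i_k}=X_{i_k}^d+E$ with $E$ controlled by \cref{lem:Xid}, and apply the inductive hypothesis to $f_{i_1}\cdots f_{i_{k-1}}$, whose coefficients then lie in $H_{i_{k-1}}(A,z)$.  The product of the two leading terms contributes exactly $X^{d_I}$ with coefficient $1$ since monomials in the $X_j$ commute.  Every element of $H_{i_{k-1}}(A,z)$ commutes with $X_{i_k}$ (by \cref{pna} and \cref{commute}, as $i_k>i_{k-1}$), so the product of the error term of $f_{i_1}\cdots f_{i_{k-1}}$ with $X_{i_k}^d$ just raises the already-controlled exponents by $d$ at index $i_k$; and \cref{twf} (equivalently \cref{demazure}) moves the $H_{i_{k-1}}(A,z)$-coefficients of $f_{i_1}\cdots f_{i_{k-1}}$ past the monomials of $E$, with the Demazure corrections $\Delta_j$, $j\le i_{k-1}-1$, leaving all coordinates $\ge i_k$ fixed.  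A direct check of these three products shows that each new monomial $X^\nu$ has $\nu_{i_k}<d$ (hence is reverse-lex below $d_I$ already at index $i_k$) or inherits the inductive reverse-lex estimate at an index $<i_k$, and satisfies $|\nu|\le d|I|$; this proves the refined claim, and hence the displayed identity.

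\emph{Conclusion.}  Granting the unitriangular expansion, one finishes in the standard way, using that $\prec$ is a well-order.  Spanning: by induction on $\prec$, for each $\mu\in\N^n$ with $(\lambda,I)=\iota^{-1}(\mu)$ one has $X^\mu=X^\lambda f_I-\sum_{\mu'\prec\mu}X^{\mu'}h_{\mu'}$, and the $X^{\mu'}$ lie in $\sum_{(\lambda,I)\in\Omega_n}(X^\lambda f_I)H_n(A,z)$ by the inductive hypothesis, so $X^\mu$ does too; hence $\{X^\lambda f_I\}$ spans $H_{n,+}^\aff(A,z)$ over $H_n(A,z)$.  Linear independence: in a relation $\sum_{(\lambda,I)}(X^\lambda f_I)c_{\lambda,I}=0$ with finitely many nonzero $c_{\lambda,I}\in H_n(A,z)$, pick $(\lambda^\ast,I^\ast)$ with $c_{\lambda^\ast,I^\ast}\ne0$ and $\iota(\lambda^\ast,I^\ast)$ maximal with respect to $\prec$; no error term reaches $\iota(\lambda^\ast,I^\ast)$, so the coefficient of $X^{\iota(\lambda^\ast,I^\ast)}$ in the expanded relation equals $c_{\lambda^\ast,I^\ast}$, which must vanish by freeness of $\{X^\mu\}$ — a contradiction.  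So $\{X^\lambda f_I:(\lambda,I)\in\Omega_n\}$ is a free basis of the right $H_n(A,z)$-module $H_{n,+}^\aff(A,z)$.  I expect the main obstacle to be the bookkeeping in the inductive step of the unitriangularity claim: verifying that none of the corrections produced by \cref{twf}/\cref{TiXd} push the exponents outside the controlled range, so that every error monomial stays strictly $\prec$ the leading exponent $d_I$.
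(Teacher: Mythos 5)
Your proposal is correct and is essentially the paper's own argument: the paper also starts from \cref{currbasis}, uses the same bijection $\Omega_n\to\N^n$ (your $\iota$ is its $\gamma$), and deduces from \cref{lem:Xid} a unitriangular expansion $X^\lambda f_I = X^{\gamma(\lambda,I)} + (\text{terms with smaller exponent})$ with respect to a lexicographic-type well-order, concluding by triangularity. Your choices of a graded reverse-lex order and induction on $|I|$ (rather than the paper's induction on $n$) are only cosmetic differences, and your observation that \cref{lem:Xid} also holds for $i=n$ matches how the paper itself uses that lemma.
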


\begin{proof}
  Consider the lexicographic ordering $\prec$ on $\N^n$.  Define a function
  \[
    \gamma \colon \Omega_n \to \N^n,\qquad
    \gamma(\lambda,I)
    = (\gamma_1, \dotsc, \gamma_n),\qquad\text{ where }\gamma_i
    =
    \begin{cases}
      \lambda_i & \text{ if }i\not\in I, \\
      \lambda_i+d & \text{ if }i\in I.
    \end{cases}
  \]
  Using induction on $n$ and \cref{lem:Xid}, we see that, for all $(\lambda,I)\in\Omega_n$,
  \begin{equation}\label{eq:gammafI}
    X^\lambda f_I-X^{\gamma(\lambda,I)}\in \sum_{\mu\prec \lambda}X^\mu H_n(A,z).
  \end{equation}
  Now, $\gamma \colon \Omega_n \to \N^n$ is a bijection and, by \cref{currbasis}, $\{X^\lambda : \lambda \in \N^n\}$ is a basis for $H_{n,+}^\aff(A,z)$ as a right $H_n(A,z)$-module.  Thus the lemma follows from \cref{eq:gammafI}.
\end{proof}

\begin{lem}\label{lem:HnfHn}
  We have that $f_n$ commutes with all elements of $H_{n-1}(A,z)$.
\end{lem}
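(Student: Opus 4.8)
The plan is to carry out the computation inside $H_n^\aff(A,z)$. Since $H_{n-1}(A,z)$ here means the parabolic subalgebra $H_{(n-1,1)}(A,z)$, which is generated by $A^{\otimes n}$ together with $T_1,\dotsc,T_{n-2}$, and since $f_n$ commutes with $A^{\otimes n}$ by \cref{drop}, it suffices to show that $f_n$ commutes with $T_j$ for every $1\le j\le n-2$.

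Write $f_n=L\,f_1\,R$, where $L=T_{n-1}T_{n-2}\dotsm T_1$ and $R=T_1T_2\dotsm T_{n-1}$; this is just \cref{fi-def} for $i=n$. The heart of the argument is the pair of identities
\[
  T_jL=L\,T_{j+1},\qquad T_{j+1}R=R\,T_j,\qquad 1\le j\le n-2,
\]
holding in $H_n(A,z)$. Each is established by the same three-move manipulation: use \cref{farcomm} to move $T_j$ (resp.\ $T_{j+1}$) past the block of generators of index $\ge j+2$ (resp.\ $\le j-1$), apply the braid relation \cref{braid} to the resulting consecutive triple $T_jT_{j+1}T_j=T_{j+1}T_jT_{j+1}$, and use \cref{farcomm} once more to push the leftover generator out the other side, past the block of index $\le j-1$ (resp.\ $\ge j+2$). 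The endpoint cases $j=1$ and $j=n-2$ are handled identically, with one of the two blocks simply empty. Finally, $f_1=f$ lies in $P_1(A)=A\otimes 1^{\otimes (n-1)}\otimes\kk[X_1]$, and for $2\le j+1\le n-1$ the element $T_{j+1}$ commutes with $A\otimes 1^{\otimes (n-1)}$ by \cref{TF} (as $s_{j+1}$ fixes the first tensor factor) and with $X_1$ by \cref{commute} (as $1\ne j+1,j+2$), so $T_{j+1}$ commutes with $f_1$. Combining these facts, for $1\le j\le n-2$ we obtain
\[
  T_jf_n=T_jLf_1R=LT_{j+1}f_1R=Lf_1T_{j+1}R=Lf_1RT_j=f_nT_j,
\]
which together with \cref{drop} finishes the proof.

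A slightly different route is induction on $n$ via $f_n=T_{n-1}f_{n-1}T_{n-1}$: for $1\le j\le n-3$ the inductive hypothesis gives $T_jf_{n-1}=f_{n-1}T_j$, and $T_j$ commutes with $T_{n-1}$ since $|j-(n-1)|\ge 2$; the remaining case $j=n-2$ then follows from a short braid computation, after observing that $T_{n-1}$ commutes with $f_{n-2}$ (which is built from $T_1,\dotsc,T_{n-3}$ and $f_1$, none involving the tensor factors $n-1,n$ or the variables $X_{n-1},X_n$). I do not expect a genuine obstacle in either version; the only point demanding care is the index bookkeeping in the two braid identities $T_jL=LT_{j+1}$ and $T_{j+1}R=RT_j$, and in particular the verification of the endpoint cases $j=1$ and $j=n-2$.
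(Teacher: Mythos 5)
Your proposal is correct and is essentially the paper's own argument: the paper likewise reduces to commutation with $A^{\otimes (n-1)}$ (via \cref{drop}) and with $T_j$, $1\le j\le n-2$, and verifies the latter by exactly the moves you package as $T_jL=LT_{j+1}$ and $T_{j+1}R=RT_j$ (far commutation \cref{farcomm}, one braid relation \cref{braid}, then commuting $T_{j+1}$ past $f$ using \cref{TF,commute}). Your index bookkeeping and endpoint cases check out, so no issues.
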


\begin{proof}
  It follows from the definition \cref{fi-def} of $f_n$ and from the relations \cref{drop,braid,commute} that $f_n$ commutes with $T_i$, $1 \leq i \leq n-2$, and $\ba\in A^{\otimes (n-1)}$.
  \details{
    For $\ba \in A^{\otimes (n-1)}$, we have
    \begin{align*}
      \ba f_n
      &= \ba T_{n-1} \dotsm T_1 f T_1 \dotsm T_{n-1} \\
      &= T_{n-1} \dotsm T_1 s_{1,n}(\ba) f T_1 \dotsm T_{n-1} \\
      &\stackrel{\mathclap{\cref{drop}}}{=}\ T_{n-1} \dotsm T_1 f s_{1,n}(\ba) T_1 \dotsm T_{n-1} \\
      &= T_{n-1} \dotsm T_1 f T_1 \dotsm T_{n-1} \ba \\
      &= f_n \ba.
    \end{align*}
    Now suppose $1 \le i \le n-2$.  Then
    \begin{align*}
      T_i f_n
      &= T_i T_{n-1} \dotsm T_1 f T_1 \dotsm T_{n-1} \\
      &= T_{n-1} \dotsm T_{i+2} T_i T_{i+1} T_i T_{i-1} \dotsm T_1 f T_1 \dotsm T_{n-1} \\
      &\stackrel{\mathclap{\cref{braid}}}{=}\  T_{n-1} \dotsm T_{i+2} T_{i+1} T_i T_{i+1} T_{i-1} \dotsm T_1 f T_1 \dotsm T_{n-1} \\
      &= T_{n-1} \dotsm T_{i+2} T_{i+1} T_i T_{i-1} \dotsm T_1 T_{i+1} f T_1 \dotsm T_{n-1} \\
      &\stackrel[\mathclap{\cref{commute}}]{\mathclap{\cref{TF}}}{=}\ T_{n-1} \dotsm T_1 f T_{i+1} T_1 \dotsm T_{n-1} \\
      &= T_{n-1} \dotsm T_1 f T_1 \dotsm T_{i-1} T_{i+1} T_i T_{i+1} T_{i+2} \dotsm T_{n-1} \\
      &\stackrel{\mathclap{\cref{braid}}}{=}\  T_{n-1} \dotsm T_1 f T_1 \dotsm T_{i-1} T_i T_{i+1} T_i T_{i+2} \dotsm T_{n-1} \\
      &= T_{n-1} \dotsm T_1 f T_1 \dotsm T_{i-1} T_i T_{i+1} T_{i+2} \dotsm T_{n-1} T_i \\
      &= f_n T_i.
    \end{align*}
  }
\end{proof}

\begin{lem}\label{lem:fplus}
  We have $\langle f \rangle_+=\sum_{i=1}^n \kk[X_1,\dotsc,X_n]f_i H_n(A,z)$.
\end{lem}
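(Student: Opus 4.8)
Write $R := \sum_{i=1}^n \kk[X_1,\dotsc,X_n]\, f_i\, H_n(A,z)$ for the right-hand side; the goal is to show $R = \langle f\rangle_+$, where $\langle f\rangle_+ = H_{n,+}^\aff(A,z)\, f\, H_{n,+}^\aff(A,z)$. The inclusion $R\subseteq\langle f\rangle_+$ is immediate: by \cref{fi-def} each $f_i$ lies in $\langle f\rangle_+$, and $\langle f\rangle_+$ is a two-sided ideal. For the reverse inclusion I would proceed in two steps, the first of which is to check that $R$ is a \emph{left} ideal of $H_{n,+}^\aff(A,z)$.

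For Step~1, note that $R$ is visibly closed under left multiplication by $\kk[X_1,\dotsc,X_n]$ and under right multiplication by $H_n(A,z)$, so since $H_{n,+}^\aff(A,z)$ is generated by $\kk[X_1,\dotsc,X_n]$, $A^{\otimes n}$ and the $T_j$, it suffices to verify left multiplication by $\ba\in A^{\otimes n}$ and by each $T_j$. The case of $\ba$ is routine, using that $\ba$ commutes with $\kk[X_1,\dotsc,X_n]$ by \cref{pna} and with every $f_i$ by \cref{drop}. For $T_j$, writing $T_j p = s_j(p)T_j + z\,t_{j,j+1}\Delta_j(p)$ by \cref{demazure} and using \cref{polyinv}, everything reduces to proving $T_j f_i\in R$. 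This splits into cases: if $j\notin\{i-1,i\}$ then in fact $T_j f_i = f_i T_j$, by the same braid-shuffling argument as in the proof of \cref{lem:HnfHn} when $j\le i-2$, and directly from \cref{farcomm} together with $T_j f = f T_j$ when $j\ge i+1$; if $j=i-1$ then, using \cref{fi-def}, $T_{i-1}f_i = T_{i-1}^2 f_{i-1}T_{i-1} = z\,t_{i-1,i}f_i + f_{i-1}T_{i-1}\in R$ by \cref{quadratic} and \cref{drop}; and if $j=i$ then $T_i f_i = f_{i+1}T_i^{-1} = f_{i+1}T_i - z\,f_{i+1}t_{i,i+1}\in R$ by \cref{skein}.

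Step~2 is to show $f_i\,\kk[X_1,\dotsc,X_n]\subseteq R$ for all $i$, by induction on $i$. The base case $i=1$ is clear since $f=f_1$ commutes with $\kk[X_1,\dotsc,X_n]$. For the inductive step, use $f_{i+1}=T_i f_i T_i$: for $p\in\kk[X_1,\dotsc,X_n]$,
\[
  f_{i+1}p = T_i f_i\big(s_i(p)T_i + z\,t_{i,i+1}\Delta_i(p)\big)
  = T_i\big(f_i s_i(p)\big)T_i + z\,T_i t_{i,i+1}\big(f_i\Delta_i(p)\big),
\]
where \cref{drop} was used to move $t_{i,i+1}$ past $f_i$. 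By the inductive hypothesis $f_i s_i(p)$ and $f_i\Delta_i(p)$ lie in $R$ (recall $\Delta_i$ preserves $\kk[X_1,\dotsc,X_n]$ by \cref{polyinv}), and since $R$ is a left ideal and a right $H_n(A,z)$-module by Step~1, both summands lie in $R$.

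Finally, since $H_{n,+}^\aff(A,z) = \kk[X_1,\dotsc,X_n]\,H_n(A,z) = H_n(A,z)\,\kk[X_1,\dotsc,X_n]$ by \cref{currbasis} (together with \cref{pna}), \cref{coffee} gives $\langle f\rangle_+ = \kk[X_1,\dotsc,X_n]\big(\sum_{i=1}^n f_i H_n(A,z)\big)\kk[X_1,\dotsc,X_n]$; moving the trailing polynomials to the left (again via $H_n(A,z)\,\kk[X_1,\dotsc,X_n]\subseteq\kk[X_1,\dotsc,X_n]\,H_n(A,z)$) and then applying Step~2 yields $\langle f\rangle_+\subseteq R$. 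I expect the main obstacle to be Step~2, i.e.\ absorbing a polynomial sitting to the \emph{right} of $f_i$: since $f_i$ does not commute with the $X_j$ one cannot do this directly, and the point that makes it work is running the induction through the recursion $f_{i+1}=T_i f_i T_i$, which only succeeds once Step~1 has established that $R$ is closed under left multiplication by $T_i$ and by elements of $A^{\otimes n}$.
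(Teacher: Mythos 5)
Your proof is correct, but it takes a genuinely more roundabout route than the paper's. Both arguments ultimately rest on \cref{coffee}, yet the paper never has to move a polynomial past the conjugated elements $f_i$: writing $H_{n,+}^\aff(A,z)=\kk[X_1,\dotsc,X_n]H_n(A,z)$ (\cref{currbasis}) and using that $f$ itself commutes with $\kk[X_1,\dotsc,X_n]$ (its coefficients lie in $A^{\otimes n}$, which commutes with the $X_j$ by \cref{pna}), one gets the one-line chain
$\langle f\rangle_+=H_{n,+}^\aff f\,\kk[X_1,\dotsc,X_n]H_n(A,z)=H_{n,+}^\aff\,\kk[X_1,\dotsc,X_n]\,f\,H_n(A,z)=H_{n,+}^\aff f H_n(A,z)=\kk[X_1,\dotsc,X_n]H_n(A,z)fH_n(A,z)$,
after which \cref{coffee} and \cref{drop} finish the proof immediately. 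In other words, the right-hand polynomial factor is absorbed into the left-hand copy of $H_{n,+}^\aff(A,z)$ \emph{before} conjugation by the $T_j$ ever enters, so the ``main obstacle'' you single out --- absorbing a polynomial sitting to the right of $f_i$ --- simply never arises, and neither your Step~1 (the left-ideal verification with its case analysis of $T_jf_i$) nor your Step~2 induction through $f_{i+1}=T_if_iT_i$ is needed. What your route buys is some extra structural information that the lemma itself does not require: explicit commutation formulas such as $T_{i-1}f_i=zt_{i-1,i}f_i+f_{i-1}T_{i-1}$ and $T_if_i=f_{i+1}T_i^{-1}$, and the fact that $\sum_i\kk[X_1,\dotsc,X_n]f_iH_n(A,z)$ is stable under left multiplication by all of $H_{n,+}^\aff(A,z)$ (which is of course a posteriori clear once the lemma is proved, since it is then a two-sided ideal). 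What the paper's route buys is brevity and the avoidance of any case analysis; it is worth internalizing the trick of commuting the inner $\kk[X_1,\dotsc,X_n]$ past $f=f_1$ and absorbing it on the left, since the same manoeuvre recurs in arguments of this type.
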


\begin{proof}
  We have
  \begin{align*}
    \langle f \rangle_+
    &= H_{n,+}^\aff (A,z)f \kk[X_1,\dotsc,X_n] H_n(A,z) \\
    &= H_{n,+}^\aff (A,z) \kk[X_1,\dotsc,X_n] f H_n(A,z) \\
    &= H_{n,+}^\aff (A,z)f H_n(A,z) \\
    &= \kk[X_1,\dotsc,X_n] H_n(A,z) f H_n(A,z) \\
    &= \sum_{i=1}^n \sum_{w \in S_n} \kk[X_1,\dotsc,X_n] A^{\otimes n} f_i T_w & \text{(by \cref{coffee})}  \\
    &\stackrel{\mathclap{\cref{drop}}}{=}\ \sum_{i=1}^n \sum_{w \in S_n} \kk[X_1,\dotsc,X_n] f_i A^{\otimes n} T_w \\
    &= \sum_{i=1}^n \kk[X_1,\dotsc,X_n] f_i H_n(A,z). \qedhere
  \end{align*}
\end{proof}

\begin{lem}\label{lem:fplus2}
  For $d>0$, we have $\langle f \rangle _+=\sum_{(\lambda ,I)\in\Omega_n^+}X^\lambda f_I H_n(A,z)$.
\end{lem}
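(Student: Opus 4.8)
The plan is to prove the two inclusions separately; $\supseteq$ is the easy one. If $(\lambda,I)\in\Omega_n^+$ with $I=\{i_1<\dots<i_k\}$, $k\ge 1$, then $f_{i_1}=T_{i_1-1}\dotsm T_1 f T_1\dotsm T_{i_1-1}$ lies in $\langle f\rangle_+$, and since $\langle f\rangle_+$ is a two-sided ideal, $X^\lambda f_I H_n(A,z)=X^\lambda f_{i_1}\bigl(f_{i_2}\dotsm f_{i_k}\bigr)H_n(A,z)\subseteq\langle f\rangle_+$. This gives $\sum_{(\lambda,I)\in\Omega_n^+}X^\lambda f_I H_n(A,z)\subseteq\langle f\rangle_+$.

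For the reverse inclusion, write $R:=\sum_{(\lambda,I)\in\Omega_n^+}X^\lambda f_I H_n(A,z)$. Since $R$ is a right $H_n(A,z)$-submodule of $H_{n,+}^\aff(A,z)$, \cref{lem:fplus} reduces the problem to showing that $X^\mu f_i\in R$ for all $\mu\in\N^n$ and $1\le i\le n$; I would prove the a priori stronger assertion that $X^\mu f_I\in R$ for all $\mu\in\N^n$ and all nonempty $I\subseteq\{1,\dots,n\}$. The argument is a reduction. If $(\mu,I)\in\Omega_n^+$ then $X^\mu f_I$ is one of the spanning elements of $R$ and there is nothing to do (observe that the hypothesis $d>0$ makes $(0,J)\in\Omega_n^+$ for every nonempty $J$, so that the various $f_J$ that arise are honest basis vectors of \cref{lem:rightbasis}). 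Otherwise there is $m\notin I$ with $\mu_m\ge d$; using \cref{lem:Xid} — whose proof in fact also yields the case $i=n$ — one writes $X_m^d=f_m-r_m$ with $r_m$ of $X_m$-degree $<d$ lying in $H_m(A,z)+\sum_{e=1}^{d-1}\kk[X_1,\dotsc,X_{m-1}]_{\le d-e}X_m^e H_m(A,z)$, whence
\[
  X^\mu f_I
  = X^{\mu-de_m}f_m f_I-X^{\mu-de_m}r_m f_I .
\]
I would then rewrite $f_m f_I$ in terms of the sorted products $f_J$ (it equals $f_{\{m\}\cup I}$ outright when $m<\min I$, and equals it up to "lower" correction terms in general, since the $f_j$'s do not commute with one another) and re-expand $X^{\mu-de_m}r_m f_I$ in the basis of \cref{lem:rightbasis}, using the relations \cref{drop,braid,commute} (as in the proof of \cref{lem:HnfHn}) and \cref{coffee} to move $H_m(A,z)$-factors past the $f_j$'s.

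The main obstacle is to package this as a genuine, terminating induction: one must produce a well-founded measure on pairs $(\mu,I)$ — presumably a lexicographic combination of a degree function, the number of $f$-factors, and $n-|I|$ — with respect to which all the terms produced by the reduction strictly decrease, namely the "clean" term $X^{\mu-de_m}f_{\{m\}\cup I}$, the commutator-type corrections $f_m f_I-f_{\{m\}\cup I}$, and the terms coming from $X^{\mu-de_m}r_m f_I$. One must also check that none of these terms acquires a component with empty index set, so that they genuinely remain inside $R$. I expect this bookkeeping — in particular, controlling the corrections forced by the non-commutativity of the $f_j$ — to be the only delicate point, and it should go through following the methods of \cite[\S7.5]{Kle05} and \cite[\S6.3]{Sav17}.
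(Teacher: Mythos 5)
Your outer structure is fine: the inclusion $\sum_{(\lambda,I)\in\Omega_n^+}X^\lambda f_I H_n(A,z)\subseteq\langle f\rangle_+$ is indeed immediate, the reduction via \cref{lem:fplus} to showing $X^\mu f_i\in R$ is the right first move, and the engine you propose --- trading $X_m^d$ for $f_m$ modulo lower terms via \cref{lem:Xid} --- is the same one the paper uses. But the core of your argument is exactly the part you leave unproved. You need (i) an expansion of $f_m f_I$ as $f_{\{m\}\cup I}$ plus ``lower'' corrections when $m>\min I$, (ii) a re-expansion of $X^{\mu-de_m}r_m f_I$ in the right $H_n(A,z)$-module basis of \cref{lem:rightbasis}, and (iii) a well-founded measure that strictly decreases on every term so produced; none of these is established, and they are not routine. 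The only commutation result available is \cref{lem:HnfHn}, which lets you move $H_m(A,z)$-factors (hence the pieces of $r_m$) rightwards past $f_j$ only for $j>m$; when $I$ contains indices $j<m$ there is no tool for pushing $r_m$, or the corrections in $f_m f_I-f_{\{m\}\cup I}$, past $f_j$ (the $f_j$ do not commute with $T_{j-1},T_j$, nor with each other in general), and invoking \cref{coffee} to do so reintroduces all the indices $1,\dotsc,n$ and destroys any measure built from $I$. So as written this is a plausible strategy with a genuine gap at its only hard point, not a proof.

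The paper sidesteps precisely this difficulty by a different organization: induction on $n$ with a secondary induction on $\lambda_n$. Writing $X^\lambda=X_n^{\lambda_n}X^\mu$, the case $i=n$ is handled by expanding $X^\mu$ via \cref{lem:rightbasis} for $H_{n-1,+}^\aff(A,z)$ and commuting $f_n$ past $H_{n-1}(A,z)$ by \cref{lem:HnfHn}; the case $i<n$ applies the inductive hypothesis in $n-1$ variables and then uses \cref{lem:Xid} (for $i=n$, as you note) to replace $X_n^d$ by $f_n$ up to terms of strictly smaller $X_n$-degree, which feeds the induction on $\lambda_n$. In that setup the only products of $f_j$'s that ever occur are right multiplications by $f_n$, the largest index, where $f_{I'}f_n=f_{I'\cup\{n\}}$ holds on the nose and \cref{lem:HnfHn} applies, so no non-commutativity corrections arise at all. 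If you want to complete your argument, restructure it along these lines --- always eliminate the last variable and only ever multiply by $f_n$ on the right of elements of $H_{n-1,+}^\aff(A,z)$ --- rather than eliminating an arbitrary $m\notin I$.
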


\begin{proof}
  We prove this by induction on $n$. When $n=1$, the statement is obvious.  Now suppose that $n>1$, and define $\langle f\rangle'_+:=H_{n-1,+}^\aff(A,z)fH_{n-1,+}^\aff(A,z)$. By the induction hypothesis we have
  \begin{equation}\label{eq:fprime}
    \langle f\rangle'_+=\sum_{(\lambda',I')\in\Omega_{n-1}^+}X^{\lambda'}f_{I'}H_{n-1}(A,z).
  \end{equation}
  Let $J=\sum_{(\lambda ,I)\in\Omega_n^+}X^\lambda f_I H_n(A,z)$.  Clearly $J\subseteq \langle f \rangle _+$, and so we need to show that $ \langle f \rangle _+\subseteq J$.  By \cref{lem:fplus}, it is enough to show that $X^\lambda f_i H_n(A,z)\subseteq J$ for all $\lambda\in\N^n$ and $1\leq i\leq n$. Consider first the case $i=n$ and write $X^\lambda=X_n^{\lambda_n} X^\mu$, where $\mu = (\lambda_1,\dotsc,\lambda_{n-1}) \in \N^{n-1}$.  Expanding $X^\mu$ in terms of the basis of $H_{n-1,+}^\aff(A,z)$ of \cref{lem:rightbasis}, we see that
  \begin{equation}
    X^\lambda f_n H_n(A,z)
    \subseteq \sum_{(\lambda',I')\in\Omega_{n-1}} X_n^{\lambda_n} X^{\lambda'} f_{I'} H_{n-1} (A,z) f_n H_n(A,z)
    \subseteq J,
  \end{equation}
  where the second inclusion follows from \cref{lem:HnfHn}.

  Now consider $X^\lambda f_i H_n(A,z)$, with $1\leq i<n$. As above, we write $X^\lambda=X_n^{\lambda_n} X^\mu$, where $\mu\in\N^{n-1}$. By the induction hypothesis, we have
  \[
    X^{\lambda}f_iH_n(A,z)
    = X_n^{\lambda_n} X^\mu f_i H_n(A,z)
    \subseteq \sum_{(\lambda',I')\in\Omega_{n-1}^+} X_n^{\lambda_n} X^{\lambda'} f_{I'} H_n(A,z).
  \]
  Now we show by induction on $\lambda_n$ that $X_n^{\lambda_n}X^{\lambda'}f_{I'}H_n(A,z)\in J$ for all $(\lambda',I')\in\Omega_{n-1}^+$. This follows immediately from the definition of $\Omega_{n-1}^+$ when $\lambda_n<d$. If $\lambda_n\geq d$, by \cref{lem:Xid} we have
  \begin{align*}
    X_n^{\lambda_n}X^{\lambda'}f_{I'}H_n(A,z)
    &= X_n^{\lambda_n-d} X^{\lambda'} f_{I'} X_n^d H_n(A,z) \\
    &\in X_n^{\lambda_n-d} X^{\lambda'} f_{I'} f_n H_n(A,z) + \sum_{e=0}^{d-1} X_n^{\lambda_n-d+e} \langle f \rangle_+' H_n(A,z).
  \end{align*}
  By the definition of $J$, we have $X_n^{\lambda_n-d}X^{\lambda'}f_{I'}f_nH_n(A,z)\in J$. Now, by \cref{eq:fprime}, for $0\leq e< d$, we have
  \[
    X_n^{\lambda_n-d+e}\langle f \rangle_+'H_n(A,z)
    \subseteq \sum_{(\lambda',I')\in\Omega_{n-1}^+} X_n^{\lambda_n-d+e} X^{\lambda'} f_{I'} H_n(A,z).
  \]
  Since $0 \leq \lambda_n-d+e < \lambda_n$, each term in the above sum is contained in $J$ by the induction hypothesis, which concludes the proof.
\end{proof}

\begin{theo}[Basis theorem for cyclotomic quotients]\label{theo:cyclobasis}
  The canonical images of the elements
  \[
    \{ X^{\lambda}\ba T_w : \lambda\in \N^n,\ \lambda_i <d \ \forall\ i,\ \ba \in B^{\otimes n},\ w \in S_n\}
  \]
  form a basis of $H_{n,+}^f(A,z)$ and of $H_n^f(A,z)$.
\end{theo}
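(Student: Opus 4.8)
The plan is to assemble the structural lemmas already in place, chiefly \cref{lem:rightbasis} and \cref{lem:fplus2}, and then pass to the quotient; essentially no new computation is required. I assume $d \geq 1$ throughout (if $d=0$ then $f=1$, both algebras are zero and the proposed spanning set is empty, so the statement is trivial). By \cref{prop:isoplus} the map $\eta$ induces an isomorphism $H_{n,+}^f(A,z)\cong H_n^f(A,z)$ carrying the image of $X^\lambda\ba T_w$ to the image of $X^\lambda\ba T_w$, so it suffices to establish the claim for $H_{n,+}^f(A,z)$.

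First I would record that $H_n(A,z)$ is free as a $\kk$-module with basis $\{\ba T_w:\ba\in B^{\otimes n},\ w\in S_n\}$: spanning follows because \cref{TF} and the braid relations \cref{farcomm,braid} let one rewrite any product of the generators $A^{\otimes n}$ and the $T_i$ in this form, while linear independence is inherited from \cref{basis} by setting $\lambda=0$ (in particular the canonical map $H_n(A,z)\to H_n^\aff(A,z)$ is injective). Combining this with \cref{lem:rightbasis}, the algebra $H_{n,+}^\aff(A,z)$ is free as a $\kk$-module with basis $\{X^\lambda f_I\ba T_w:(\lambda,I)\in\Omega_n,\ \ba\in B^{\otimes n},\ w\in S_n\}$.

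The key step is to locate $\langle f\rangle_+$ inside this decomposition. By \cref{lem:rightbasis} the family $\{X^\lambda f_I:(\lambda,I)\in\Omega_n\}$ is a free right $H_n(A,z)$-basis of $H_{n,+}^\aff(A,z)$, so the subfamily indexed by $\Omega_n^+$ spans a direct summand, and by \cref{lem:fplus2} that summand is exactly $\langle f\rangle_+$. Hence
\[
  H_{n,+}^f(A,z)=H_{n,+}^\aff(A,z)/\langle f\rangle_+\;\cong\;\bigoplus_{(\lambda,I)\in\Omega_n\setminus\Omega_n^+}X^\lambda f_I H_n(A,z)
\]
as right $H_n(A,z)$-modules. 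Since $\Omega_n\setminus\Omega_n^+=\{(\lambda,\varnothing):\lambda\in\N^n,\ \lambda_i<d\ \text{for all }i\}$ and $f_\varnothing=1$, the canonical images of $\{X^\lambda:\lambda\in\N^n,\ \lambda_i<d\ \text{for all }i\}$ form a free right $H_n(A,z)$-basis of $H_{n,+}^f(A,z)$; using the $\kk$-basis $\{\ba T_w\}$ of $H_n(A,z)$ from the previous paragraph then yields the asserted $\kk$-basis of $H_{n,+}^f(A,z)$, and transporting along $\eta$ (via \cref{prop:isoplus}) yields it for $H_n^f(A,z)$.

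The only real obstacle is the bookkeeping in this last step: one must verify that the index set $\Omega_n^+$ appearing in \cref{lem:fplus2} is precisely the subset of the free-basis index set $\Omega_n$ of \cref{lem:rightbasis} consisting of pairs $(\lambda,I)$ with $I\neq\varnothing$, so that the quotient collapses exactly the ``$I\neq\varnothing$'' part and nothing more; all the substantive work (\cref{lem:Xid}, \cref{lem:HnfHn}, \cref{lem:fplus}, \cref{lem:rightbasis}, \cref{lem:fplus2}) has already been carried out.
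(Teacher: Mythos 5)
Your proposal is correct and follows essentially the same route as the paper: it combines \cref{lem:rightbasis} and \cref{lem:fplus2} to see that $\{X^\lambda f_I : (\lambda,I)\in\Omega_n^+\}$ is a free right $H_n(A,z)$-basis of $\langle f\rangle_+$ inside the free basis $\{X^\lambda f_I : (\lambda,I)\in\Omega_n\}$ of $H_{n,+}^\aff(A,z)$, so the complement indexed by $I=\varnothing$ descends to a basis of the quotient, with \cref{prop:isoplus} transferring the result to $H_n^f(A,z)$. The only differences are harmless elaborations the paper leaves implicit (the $\kk$-basis $\{\ba T_w\}$ of $H_n(A,z)$ and the degenerate $d=0$ case).
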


\begin{proof}
  By \cref{lem:rightbasis,lem:fplus2}, the elements $\{X^\lambda f_I : (\lambda,I) \in \Omega_n^+\}$ form a basis for $\langle f \rangle_+$ as a $H_n(A,z)$-right module.  Thus \cref{lem:rightbasis} implies that
  \[
    \{X^{\lambda} : \lambda\in\N^n,\ \lambda_i<d,\ \forall\ i\}
  \]
  is a basis for a complement to $\langle f\rangle_+$ inside $H_{n,+}^\aff$, viewed as a right $H_n(A,z)$-module.
\end{proof}

\begin{rem}
  In the setting of affine Hecke algebras (see \cref{eg:affHecke}), \cref{theo:cyclobasis} recovers \cite[Th.~3.10]{AK94}.  For affine Yokonuma--Hecke algebras  (see \cref{affYHalg}), it was proved in \cite[Th.~4.4]{CPd16}.
\end{rem}

\begin{cor}
  Every level one quantum cyclotomic wreath algebra is isomorphic to $H_n(A,z)$.
\end{cor}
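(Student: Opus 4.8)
The plan is to show that the natural algebra homomorphism
\[
  \varphi \colon H_n(A,z) \hookrightarrow H_n^\aff(A,z) \twoheadrightarrow H_n^f(A,z),
\]
obtained by composing the inclusion of $H_n(A,z)$ as a subalgebra of $H_n^\aff(A,z)$ with the quotient map, is an isomorphism when the level $d$ equals one. This $\varphi$ is automatically a well-defined algebra homomorphism, since the defining relations of $H_n(A,z)$ hold among the generators $\ba$, $T_i$ of $H_n^\aff(A,z)$ and hence in any quotient. So the whole task is to check that $\varphi$ is bijective, and I would do this by comparing bases on both sides.

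First I would record that $\{\ba T_w : \ba \in B^{\otimes n},\ w \in S_n\}$ is a $\kk$-basis of $H_n(A,z)$: it spans because the relations \cref{farcomm,braid,quadratic,TF} allow one to rewrite any product of the generators in this form, and it is linearly independent because, by \cref{basis} with all $X$-exponents equal to zero, the elements $\ba T_w$ are already linearly independent inside $H_n^\aff(A,z)$ (this simultaneously shows the inclusion $H_n(A,z)\hookrightarrow H_n^\aff(A,z)$ is injective). Next, since the level is one we have $d=1$ and $f = X_1 + a_{(0)}$ with $a_{(0)} \in Z(A)_0$ invertible; applying \cref{theo:cyclobasis} with $d=1$, the condition $\lambda\in\N^n$, $\lambda_i<1$ forces $\lambda=0$, so the canonical images of $\{\ba T_w : \ba\in B^{\otimes n},\ w\in S_n\}$ form a $\kk$-basis of $H_n^f(A,z)$. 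Now $\varphi$ sends the basis element $\ba T_w$ of $H_n(A,z)$ to the canonical image of $\ba T_w$ in $H_n^f(A,z)$, and $\ba T_w \mapsto \ba T_w$ is a bijection of the indexing set $B^{\otimes n}\times S_n$; hence $\varphi$ carries a $\kk$-basis bijectively onto a $\kk$-basis and is therefore an isomorphism of $\kk$-modules, and being an algebra homomorphism, an isomorphism of algebras.

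I do not expect a genuine obstacle here: essentially all the content sits in \cref{theo:cyclobasis}. The one point deserving a moment of care is that $\{\ba T_w\}$ really is a \emph{basis} of $H_n(A,z)$, not merely a spanning set — this is what rules out a kernel for the (evidently surjective) map $\varphi$, and it is exactly the linear-independence part of \cref{basis}. If one prefers to argue surjectivity of $\varphi$ directly instead of via bases, one notes that in $H_n^f(A,z)$ we have $X_1 = -a_{(0)} \in A$, hence $X_1^{-1} = -a_{(0)}^{-1} \in A$ since $a_{(0)}$ is invertible, and then \cref{inverse} together with the invertibility of the $T_i$ gives $X_i^{\pm 1} \in \im\varphi$ for all $i$ by induction; since $H_n^\aff(A,z)$ is generated by $H_n(A,z)$ and the $X_i^{\pm 1}$, this shows $\im\varphi = H_n^f(A,z)$.
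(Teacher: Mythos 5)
Your proof is correct, but it takes a different route from the paper. The paper's own argument goes in the opposite direction: for $f = X_1 - 1$ it observes that the Jucys--Murphy surjection of \cref{prop:jucysmap} (which sends $X_1 \mapsto J_1 = 1$) kills $f$ and hence realizes the quotient $H_n^\aff(A,z) \twoheadrightarrow H_n^f(A,z) \cong H_n(A,z)$, and it then handles a general level-one polynomial $f = X_1 + a$ by twisting with the rescaling automorphism $\zeta$ of \cref{rescale} to reduce to the case $a = -1$. You instead take the forward map $\varphi \colon H_n(A,z) \hookrightarrow H_n^\aff(A,z) \twoheadrightarrow H_n^f(A,z)$ and check directly that it carries the basis $\{\ba T_w\}$ of $H_n(A,z)$ (linear independence coming from \cref{basis}, spanning from the defining relations) onto the basis of $H_n^f(A,z)$ given by \cref{theo:cyclobasis} with $d=1$; this treats all invertible constant terms $a_{(0)}$ uniformly and needs neither the Jucys--Murphy elements nor the automorphism $\zeta$. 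Both arguments ultimately rest on the level-one case of \cref{theo:cyclobasis}; what the paper's route buys is the explicit extra information that, under the identification, the images of the $X_i$ in the level-one quotient are (up to the rescaling) the Jucys--Murphy elements $J_i$, which connects the corollary to \cref{prop:jucysmap}, whereas your route is more self-contained and makes the role of the invertibility of $a_{(0)}$ (via \cref{theo:cyclobasis}, and in your alternative surjectivity argument via $X_1^{-1} = -a_{(0)}^{-1}$) completely explicit. Your map and the paper's are mutually inverse, so the two proofs are genuinely compatible.
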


\begin{proof}
  If $f=X_1-1$, then the map $H_n^\aff(A,z)\twoheadrightarrow H^f_n(A,z)\cong H_n(A,z)$ is exactly the map of \cref{prop:jucysmap}.  In general $f=X_1+a$ with $a \in Z(A)$ even and invertible. So the result follows by applying the automorphism $\zeta_{-a}$ of \cref{rescale}.
\end{proof}

\subsection{Cyclotomic Mackey Theorem}

\Cref{theo:cyclobasis} implies that the subalgebra of $H_{n+1}^f(A,z)$ generated by $X_1,\dotsc, X_n$, $A^{\otimes n}\otimes 1$ and $T_1,\dotsc,T_{n-1}$ is isomorphic to $H_n^f(A,z)$.  Thus we can define induction and restriction functors
\[
  {}^f\Ind_n^{n+1} \colon H_n^f(A,z)\md \to H_{n+1}^f(A,z)\md,\quad
  {}^f\Res_n^{n+1} \colon H_{n+1}^f(A,z)\md\to H_{n}^f(A,z)\md.
\]
Let $\Pi \colon H_n^f(A,z)\md \to H_n^f(A,z)\md$ denote the functor that reverses the parity of the elements of a module.

\begin{prop}\label{mackey}
Recall that $d=\deg f$.
\begin{enumerate}
  \item We have that $H^f_{n+1}(A,z)$ is a free right $H^f_n(A,z)$-module with basis
    \[
      \{ X_j^r a_j T_j \dotsm T_n : 0 \leq r <d,\ a \in B,\ 1 \leq j \leq n+1\}.
    \]

  \item \label{mackey-decomp} We have a decomposition of $(H_n^f(A,z), H_n^f(A,z))$-bimodules
    \[
      H_{n+1}^f(A,z)
      = H_n^f(A,z) T_n H_n^f(A,z) \oplus \bigoplus_{0\leq r <d,\ a \in B} X_{n+1}^r a_{n+1} H_n^f(A,z).
    \]

  \item For $0 \le r \le d$ and homogeneous $a \in A$, we have parity-preserving isomorphisms of $(H_n^f(A,z), H_n^f(A,z))$-bimodules
    \begin{gather*}
      H_n^f(A,z)T_n H_n^f(A,z) \cong H_n^f(A,z)\otimes_{H_{n-1}^f(A,z)} H_n^f(A,z) \quad \text{and} \\
      X^r_{n+1}a_{n+1} H_n^f(A,z) \cong \Pi^{\bar{a}}H_n^f(A,z).
    \end{gather*}
  \end{enumerate}
\end{prop}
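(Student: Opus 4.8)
The plan is to derive all three parts from the basis theorem \cref{theo:cyclobasis} together with a $\kk$-rank count, with part~(a) doing the real work. (Recall that a symmetric algebra is automatically free of finite rank over $\kk$, so every module below has finite $\kk$-rank.)

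\emph{Part (a).} Put $c_j := s_j s_{j+1}\dotsm s_n \in S_{n+1}$ for $1\le j\le n+1$ (with $c_{n+1}=e$); these are the minimal-length representatives of the left cosets modulo $S_n$, with $c_j(n+1)=j$, so every $w\in S_{n+1}$ factors uniquely as $w=c_j v$ with $v\in S_n$ and $\ell(w)=\ell(c_j)+\ell(v)$, whence $T_w = T_j T_{j+1}\dotsm T_n T_v$. Thus by \cref{theo:cyclobasis} the set of all $X^\lambda \ba\, T_{c_j} T_v$ (with $\lambda\in\{0,\dotsc,d-1\}^{n+1}$, $\ba\in B^{\otimes(n+1)}$, $1\le j\le n+1$, $v\in S_n$) is a $\kk$-basis of $H_{n+1}^f(A,z)$. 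I would compare it with the family $(X_j^r a_j T_{c_j})\cdot(X^\mu \bb T_v)$, where $X^\mu \bb T_v$ runs over the \cref{theo:cyclobasis} basis of $H_n^f(A,z)$. Moving $\bb$ past $T_{c_j}$ via \cref{pna} it becomes $c_j(\bb)$, which has trivial slot $j$, so $a_j c_j(\bb)$ equals, up to sign, the element $\bb'$ of $B^{\otimes(n+1)}$ obtained by inserting $a$ in slot $j$; moving $T_{c_j}$ past the remaining $X^\mu$ via \cref{twf} produces $c_j(X^\mu)$ (which has no $X_j$, so $X_j^r c_j(X^\mu)=X^{\tilde\lambda}$ with $\tilde\lambda$ obtained from $\mu$ by inserting $r$ in position $j$) plus Demazure corrections. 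Hence
\[
  (X_j^r a_j T_{c_j})(X^\mu \bb T_v) = \pm\, X^{\tilde\lambda}\bb'\, T_{c_j v}\ +\ (\text{lower terms}),
\]
and $(j,r,a,\mu,\bb,v)\mapsto(\tilde\lambda,\bb',c_j v)$ is a bijection onto the index set of the \cref{theo:cyclobasis} basis of $H_{n+1}^f(A,z)$. It then remains to order that basis so that the ``lower terms'' are genuinely lower and the change of basis is block-triangular with signed-permutation diagonal blocks: the Demazure corrections from \cref{twf} strictly lower the Bruhat length, while the reductions of powers $X_i^d$ ($i\le n$) coming from \cref{lem:Xid} and the vanishing of the $f_i$ in $H_{n+1}^f(A,z)$ must also be accommodated. \emph{This is the one genuinely delicate point}, and is carried out as in \cite[\S7.5]{Kle05} and \cite[\S6.3]{Sav17}.

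\emph{Part (b).} Write $M = \bigoplus_{0\le r<d,\, a\in B,\, 1\le j\le n} X_j^r a_j T_j\dotsm T_n\, H_n^f(A,z)$ and $N = \bigoplus_{0\le r<d,\, a\in B} X_{n+1}^r a_{n+1} H_n^f(A,z)$, so that $H_{n+1}^f(A,z)=M\oplus N$ by~(a), and $M,N$ are sub-bimodules since $X_{n+1}$ commutes with $H_n^f(A,z)$ and $a_{n+1}$ super-commutes with it. For $j\le n$ we have $X_j^r a_j T_j\dotsm T_n = (X_j^r a_j T_j\dotsm T_{n-1})T_n$ with the first factor in $H_n^f(A,z)$, so $M\subseteq H_n^f(A,z)\,T_n\,H_n^f(A,z)$. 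Since $T_n$ commutes with all generators of $H_{n-1}^f(A,z)$ (by \cref{farcomm,commute,TF}), there is a surjective $(H_n^f,H_n^f)$-bimodule map $\psi\colon H_n^f(A,z)\otimes_{H_{n-1}^f(A,z)}H_n^f(A,z)\twoheadrightarrow H_n^f(A,z)\,T_n\,H_n^f(A,z)$, $h\otimes h'\mapsto hT_nh'$. By part~(a) with $n$ replaced by $n-1$, $H_n^f(A,z)$ is free of rank $d\,n\,|B|$ over $H_{n-1}^f(A,z)$, so the source of $\psi$ has the same $\kk$-rank as $M$. Therefore
\[
  \operatorname{rk}_\kk M \le \operatorname{rk}_\kk\!\big(H_n^f\, T_n\, H_n^f\big) \le \operatorname{rk}_\kk\!\big(H_n^f\otimes_{H_{n-1}^f}H_n^f\big) = \operatorname{rk}_\kk M,
\]
so all three coincide; hence $\psi$ is an isomorphism and $M = H_n^f(A,z)\,T_n\,H_n^f(A,z)$, which is the stated decomposition.

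\emph{Part (c).} The first isomorphism is $\psi$ from the proof of~(b): it is a homomorphism of $(H_n^f,H_n^f)$-bimodules by construction, parity-preserving since $T_n$ is even, and was just shown to be an isomorphism. For the second, $X_{n+1}$ is invertible in $H_{n+1}^f(A,z)$ (being the image of an invertible element of $H_{n+1}^\aff(A,z)$), so for $0\le r\le d$ the map $H_n^f(A,z)\to X_{n+1}^r a_{n+1}H_n^f(A,z)$, $h\mapsto X_{n+1}^r a_{n+1}h$, is a $\kk$-module isomorphism: surjective by definition, and injective because $X_{n+1}^{-r}$ reverses it and $a_{n+1}H_n^f(A,z)$ is a free rank-one summand of $N$ by~(a). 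For homogeneous $h'\in H_n^f(A,z)$ one has $h' X_{n+1}^r a_{n+1}h = X_{n+1}^r h' a_{n+1}h = (-1)^{\bar{h'}\bar a}X_{n+1}^r a_{n+1}h'h$, while right multiplication is unchanged, which matches the bimodule structure of $\Pi^{\bar a}H_n^f(A,z)$ (whose left action of $h'$ carries the sign $(-1)^{\bar{h'}\bar a}$); and the map is parity-preserving since $\overline{X_{n+1}^r a_{n+1}h} = \bar a+\bar h$. As flagged, the only genuinely hard step is the triangularity argument underlying~(a).
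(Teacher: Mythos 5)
Your overall route is the same as the paper's, which simply cites \cite[Lem.~7.6.1]{Kle05} and omits the proof: get part~(a) from \cref{theo:cyclobasis}, then deduce (b) and (c) by bimodule considerations and a rank count. Your treatments of (b) and (c) are essentially complete: the balancedness of $\psi$ over $H_{n-1}^f(A,z)$ (because $T_n$ commutes with $X_1,\dotsc,X_{n-1}$, $A^{\otimes(n-1)}\otimes 1 \otimes 1$ and $T_1,\dotsc,T_{n-2}$), the inclusion of your $M$ in $H_n^f(A,z)T_nH_n^f(A,z)$, and the sign bookkeeping for $\Pi^{\bar a}$ are all as they should be.

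Two caveats. First, the mechanism you propose for (a) --- a change of basis that is triangular for the Bruhat order with signed-permutation diagonal --- is not right as stated, and is not how the cited sources argue: whenever a correction term $X_j^r g_u T_uT_v$ has $X_j$-degree $\ge d$, you must reduce it in the quotient using \cref{lem:Xid} (i.e.\ $f_j=0$), and that reduction reintroduces elements of $H_j(A,z)$, hence $T_w$ with $w\in S_j$ of arbitrary length, so Bruhat length is not monotone along the rewriting. The standard proof (as in \cite{Kle05} and \cite{Sav17}) instead shows the proposed elements \emph{span}, by an induction in the spirit of \cref{lem:fplus2} using \cref{TiXd,lem:Xid}, and then gets freeness for free from \cref{theo:cyclobasis}: the spanning family consists of exactly $(n+1)\,d\,|B|\cdot\operatorname{rk}_\kk H_n^f(A,z)=\operatorname{rk}_\kk H_{n+1}^f(A,z)$ elements, and a generating set of a free $\kk$-module whose cardinality equals its rank is a basis. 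Since you flagged this as the delicate point and deferred to the right references, it is a soft spot rather than an error, but the triangularity claim should not be presented as the argument. Second, over a general commutative $\kk$ the quantity $\operatorname{rk}_\kk\big(H_n^f(A,z)T_nH_n^f(A,z)\big)$ is not defined a priori (it is only a submodule of a free module); replace your inequality chain in (b) by the modular law, $H_n^fT_nH_n^f=M\oplus\big(H_n^fT_nH_n^f\cap N\big)$, and note that the composite of $\psi$ with the projection onto $M$ is a surjection of free $\kk$-modules of equal finite rank, hence an isomorphism, whence $\psi$ is an isomorphism and the intersection vanishes. With these repairs your proof matches the intended (omitted) one.
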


\begin{proof}
  The proof is almost identical to the proof of \cite[Lemma 7.6.1]{Kle05} and so will be omitted.
\end{proof}

\begin{theo}[Cyclotomic Mackey Theorem] \label{cycloMackey}
  For all $n\in\N_+$, we have a natural isomorphism of functors
  \[
    {}^f\Res^{n+1}_n\, {}^f\Ind^{n+1}_n\cong \id^{\oplus d\dim(A_0)}\oplus \Pi^{\oplus d\dim(A_1)}\oplus {}^f\Ind^{n}_{n-1}\, {}^f\Res^{n}_{n-1}.
  \]
\end{theo}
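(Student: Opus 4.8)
The plan is to realize the composite ${}^f\Res^{n+1}_n\, {}^f\Ind^{n+1}_n$ as the functor $M\mapsto H^f_{n+1}(A,z)\otimes_{H^f_n(A,z)} M$, where $H^f_{n+1}(A,z)$ is regarded as an $(H^f_n(A,z),H^f_n(A,z))$-bimodule via the embedding of $H^f_n(A,z)$ into $H^f_{n+1}(A,z)$ used in \cref{mackey}, and then to substitute the bimodule decomposition of \cref{mackey-decomp}. Since $-\otimes_{H^f_n(A,z)} M$ preserves direct sums, this reduces the problem to analyzing each summand of that decomposition on its own and collecting the resulting functors.

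First I would treat the summand $H^f_n(A,z)\, T_n\, H^f_n(A,z)$. Part (c) of \cref{mackey} supplies a parity-preserving bimodule isomorphism $H^f_n(A,z)\, T_n\, H^f_n(A,z)\cong H^f_n(A,z)\otimes_{H^f_{n-1}(A,z)} H^f_n(A,z)$; tensoring on the right over $H^f_n(A,z)$ with $M$ and using associativity of the tensor product identifies the corresponding functor with $M\mapsto H^f_n(A,z)\otimes_{H^f_{n-1}(A,z)} M$, which is exactly ${}^f\Ind^{n}_{n-1}\, {}^f\Res^{n}_{n-1}$. Next, for each summand $X_{n+1}^r a_{n+1} H^f_n(A,z)$ with $0\le r<d$ and $a$ ranging over the homogeneous basis $B$ of $A$, part (c) of \cref{mackey} gives a parity-preserving bimodule isomorphism onto $\Pi^{\bar a}H^f_n(A,z)$; combined with the canonical isomorphism $\Pi^{\bar a}H^f_n(A,z)\otimes_{H^f_n(A,z)} M\cong\Pi^{\bar a}M$, this summand contributes a copy of $\id$ if $a$ is even and a copy of $\Pi$ if $a$ is odd. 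Recalling that $B$ has $\dim(A_0)$ even elements and $\dim(A_1)$ odd elements, and running over the $d$ values of $r$, the combined contribution of these summands is $\id^{\oplus d\dim(A_0)}\oplus\Pi^{\oplus d\dim(A_1)}$.

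Assembling the two contributions yields the claimed decomposition. Naturality in $M$ is automatic, since every identification above is induced either by a fixed $(H^f_n(A,z),H^f_n(A,z))$-bimodule isomorphism or by the canonical (hence natural) isomorphisms expressing $-\otimes_{H^f_n(A,z)} M$ applied to $H^f_n(A,z)$, to $H^f_n(A,z)\otimes_{H^f_{n-1}(A,z)} H^f_n(A,z)$, and to $\Pi^{\bar a}H^f_n(A,z)$. Since the substantive content — the $(H^f_n(A,z),H^f_n(A,z))$-bimodule decomposition of $H^f_{n+1}(A,z)$ together with the two bimodule isomorphisms describing its summands — is already provided by \cref{mackey}, I do not expect any genuine obstacle here; the only real work is the bookkeeping of the direct-sum indices and of the parities.
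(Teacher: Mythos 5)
Your proposal is correct and is essentially the paper's argument: the paper proves \cref{cycloMackey} by invoking \cref{mackey}, and your write-up just makes explicit the standard translation from the bimodule decomposition in \cref{mackey}\cref{mackey-decomp} and the summand identifications in part (c) to the functorial statement. No gaps; this matches the intended proof.
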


\begin{proof}
  This follows from \cref{mackey}.
\end{proof}

\begin{rem}
  \Cref{mackey} is the key ingredient in showing that the quantum Frobenius Heisenberg categories of \cite{BS19} act on categories of modules for quantum cyclotomic wreath algebras.  It corresponds to the inversion relation in the quantum Frobenius Heisenberg categories.
\end{rem}

\subsection{Symmetric algebra structure}

By \cref{theo:cyclobasis}, we can define a $\kk$-linear map
\begin{equation} \label{trace}
  \tr_f^n \colon H_n^f(A,z) \to \kk,\quad
  X^\lambda \ba T_w \mapsto \delta_{\lambda,0} \delta_{w,1} \tr(\ba),
\end{equation}
where $\tr(\ba) = \tr^{\otimes n}(\ba)$ is the natural trace map on the tensor product algebra $A^{\otimes n}$ (here, on the right-hand side, $\tr$ is the trace map on $A$).

\begin{theo} \label{chasm}
  The cyclotomic quotient $H_n^f(A,z)$ is a symmetric algebra with trace map $\tr_f^n$.
\end{theo}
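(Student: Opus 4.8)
The plan is to verify the two defining properties of a symmetric superalgebra with symmetrizing trace $\tr_f^n$. First, the associated bilinear form $\beta(x,y):=\tr_f^n(xy)$ must be supersymmetric, $\beta(x,y)=(-1)^{\bar x\bar y}\beta(y,x)$; associativity of $\beta$ is automatic from associativity of the product, and $\tr_f^n$ is parity-preserving because $\tr$ is. Second, $\beta$ must be non-degenerate, i.e.\ $x\mapsto\beta(x,-)$ must be a parity-preserving isomorphism $H_n^f(A,z)\to\Hom_\kk(H_n^f(A,z),\kk)$. By \cref{theo:cyclobasis}, $H_n^f(A,z)$ is free of finite rank over $\kk$ with basis $\mathcal B=\{X^\lambda\ba T_w:\lambda\in\N^n,\ \lambda_i<d\text{ for all }i,\ \ba\in B^{\otimes n},\ w\in S_n\}$, so non-degeneracy is equivalent to invertibility over $\kk$ of the Gram matrix $\big(\beta(b,b')\big)_{b,b'\in\mathcal B}$. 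It is also useful to record that $\tr_f^n=\tr^{\otimes n}\circ E$ for the $A^{\otimes n}$-bimodule map $E\colon H_n^f(A,z)\to A^{\otimes n}$, $X^\lambda\ba T_w\mapsto\delta_{\lambda,0}\delta_{w,1}\ba$ (well defined by \cref{theo:cyclobasis}); thus the theorem is equivalent to the statement that $H_n^f(A,z)$ is a symmetric Frobenius extension of the symmetric algebra $A^{\otimes n}$, and by \cref{prop:isoplus} one may equally argue inside $H_{n,+}^f(A,z)$.

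For non-degeneracy I would show that, after reordering $\mathcal B$ to refine (in decreasing priority) the lexicographic order on $\lambda$ and then the Bruhat order on $w$, the Gram matrix is block-triangular with invertible diagonal blocks. This rests on three points. $(i)$ Using \cref{twf} together with the fact, proved by a routine induction on $\ell(v)$ from \cref{quadratic}, that the coefficient of $T_1$ in a product $T_{w'}T_v$ of standard elements is $\delta_{v,(w')^{-1}}$, one sees that $\beta(X^\lambda\ba T_w,\,X^\mu\bb T_v)$ is governed by the ``diagonal'' pairs $v=w^{-1}$, the Bruhat-lower terms of \cref{twf} and the $t_{i,i+1}$-decorated higher terms of $T_wT_{w^{-1}}$ contributing only after reduction modulo $\langle f\rangle$ and being absorbed by an induction. $(ii)$ For a fixed exponent $\lambda$ with all $\lambda_i<d$, pairing $X^\lambda$ with the complementary exponent and reducing the resulting monomial modulo $\langle f\rangle$ via \cref{lem:Xid} and \cref{TiXd} yields, as its degree-zero-in-$X$ part, a central unit $\alpha\in A^{\otimes n}$ built from powers of the invertible element $a_{(0)}$, with all correction terms strictly smaller in the lexicographic order. $(iii)$ On the residual $\ba$-coordinate the pairing becomes $(\ba,\bb)\mapsto\tr^{\otimes n}\big(\ba\,w(\bb)\,\alpha\big)$, whose Gram matrix in the basis $B^{\otimes n}$ is invertible over $\kk$ because $A^{\otimes n}$ is symmetric and $\bb\mapsto\bb\alpha$ is bijective (the dual basis being the appropriately twisted dual tensors, with signs as in \cref{ddual}). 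Putting $(i)$--$(iii)$ together exhibits an invertible Gram matrix.

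For supersymmetry I would use that $S:=\{x\in H_n^f(A,z):\tr_f^n(xy)=(-1)^{\bar x\bar y}\tr_f^n(yx)\text{ for all }y\}$ is a unital $\kk$-subalgebra, which follows from a short computation with the super signs (using $2\,\bar a\,\bar b\equiv 0$). Since $\tr_f^n$ is $\kk$-linear, it then suffices to check that each algebra generator $X_i^{\pm1}$, $a_i$ ($a\in A$), $T_i$ lies in $S$, testing $y$ against $\mathcal B$. For $x=a_i$ this is precisely supersymmetry of $\tr^{\otimes n}$ on $A^{\otimes n}$, which holds since $A$ is symmetric. For $x=X_i^{\pm1}$ and $x=T_i$ it is a direct computation from the formula \cref{trace} together with \cref{TF}, \cref{demazure}, \cref{twf} and \cref{TiXd}; in the $T_i$ case one again invokes the $T_1$-coefficient fact from $(i)$ and the identity $s_i(t_{i,i+1})=t_{i,i+1}$. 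With non-degeneracy and automatic associativity of $\beta$, this shows $(H_n^f(A,z),\tr_f^n)$ is a symmetric superalgebra.

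The main obstacle is the non-degeneracy step, specifically the bookkeeping required to control the interaction of the three layers: one must check that the Bruhat-lower terms of \cref{twf} and the $t_{i,i+1}$-decorations of $T_wT_{w^{-1}}$ --- which, after reduction modulo $\langle f\rangle$, re-enter the computation carrying their own $X$-powers and $T$-coefficients --- cannot spoil the invertibility of the diagonal blocks. This is the quantum analogue of the computation carried out in the degenerate case in \cite[\S6]{Sav17}, which itself follows \cite[\S7.5]{Kle05}, and I expect it to go through with essentially notational changes; alternatively, one could prove a Frobenius-extension statement for $H_{n+1}^f(A,z)$ over $H_n^f(A,z)$ and induct on $n$, at the cost of establishing that statement first.
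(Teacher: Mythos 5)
Your overall architecture is the same as the paper's: non-degeneracy via a triangularity argument built on the basis of \cref{theo:cyclobasis}, and supersymmetry by reducing to algebra generators (your subalgebra $S$ is equivalent to the paper's verification that the Nakayama automorphism fixes each generator, and its closure under products even gives you $X_{i+1}=T_iX_iT_i\in S$ and $X_1^{-1}\in S$ for free, so only $X_1$, the $a_i$ and the $T_i$ need direct checks). The one genuine divergence is in the non-degeneracy step: because $a_{(0)}$ is invertible, the $X_i$ are invertible in $H_n^f(A,z)$, and the paper pairs the basis against the explicit candidates $X^{-\lambda}\ba^\vee T_{w^{-1}}$; by \cref{twf}, \cref{meatballs} and \cref{lem:Xid} the resulting matrix is \emph{unitriangular} (Bruhat order on $w$ as primary key, then a right-to-left lexicographic order on $\lambda$), so no reduction of $X^{\lambda+\mu}$ modulo $\langle f\rangle$ and no units built from $a_{(0)}$ ever enter. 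Your complementary-exponent Gram-matrix route is the natural transplant of the degenerate argument (where no $X_i^{-1}$ exists), and may well work, but the bookkeeping you yourself flag as the main obstacle --- including whether your ordering (lex on $\lambda$ with priority over Bruhat on $w$) really triangularizes once the $H_i(A,z)$-valued correction terms of \cref{lem:Xid} re-enter with their own $T_u$'s --- is left unverified, and in the quantum setting it is avoidable.

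The more serious gap is in the symmetry half. The checks $a_i\in S$ and $T_i\in S$ are indeed short, but calling $X_i^{\pm1}\in S$ ``a direct computation'' from \cref{trace}, \cref{TF}, \cref{demazure}, \cref{twf}, \cref{TiXd} misjudges where the difficulty sits: this is the longest and most delicate part of the paper's proof. One must show $\tr_f^n(X_1X^\lambda\ba T_w)=\tr_f^n(X^\lambda\ba T_w X_1)$ for every basis element; when $w(1)\neq 1$ both sides must be shown to \emph{vanish}, and this requires writing $w=s_1v$, resp.\ $w=w_1s_1w_2$ with $v(1)=w_1(1)=w_2(1)=1$, the identity $T_1X_1^m=X_2^mT_1^{-1}-zt_{1,2}\sum_{r=1}^{m-1}X_1^{m-r}X_2^r$ (\cref{camp}), elimination of $X_2^dT_1^{-1}$ using the cyclotomic polynomial \cref{festival}, the vanishing argument \cref{scarf}, and the reduction \cref{shred}, which uses that all Bruhat-lower terms $u<w_1$ still satisfy $u(1)=1$. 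Your sketch identifies the right tools (\cref{TiXd} is essentially \cref{camp}) but supplies none of this; ``testing $y$ against $\mathcal{B}$'' only sets up the computation whose execution is the actual content of the theorem. So the proposal is a sound plan with correct architecture, but both of its hard steps --- the triangularity bookkeeping and, above all, the $X_1$-symmetry check --- are asserted rather than proved.
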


\begin{proof}
  Consider the total order on $\N^n$ given by $\lambda < \mu$ if and only if
  \[
    \lambda_n = \mu_n, \dotsc, \lambda_{i+1} = \mu_{i+1} \text{ and } \lambda_i < \mu_i
    \quad \text{for some } 1 \le i \le n.
  \]
  For the remainder of this proof,
  \begin{itemize}
    \item $\lambda$ and $\mu$ will denote elements of $\N^n$ such that $\lambda_i,\mu_i < d$ for all $i$,
    \item $\ba$ and $\bb$ will denote elements of $B^{\otimes n}$, and
    \item $u,v,w$ will denote elements of $S_n$.
  \end{itemize}

  We must verify that the basis given in \cref{theo:cyclobasis} has a left dual basis with respect to $\tr_f^n$.  By \cref{twf}, we have
  \begin{equation} \label{chicken}
    \tr_f^n ( X^{-\lambda} \ba^\vee T_{w^{-1}} X^\mu \bb T_v )
    = \tr_f^n \left( X^{w^{-1}(\mu) - \lambda} \ba^\vee w^{-1}(\bb) T_{w^{-1}} T_v + \sum_{u < w^{-1}} f_u T_u T_v \right)
  \end{equation}
  for some $f_u \in P_n(A)$.

  By \cref{affbasis}, the equation \cref{meatballs} holds in $H_n^\aff(A,z)$.  It follows that
  \[
    T_{w^{-1}} T_w \in T_1 + \sum_{v \ne 1} P_n(A) T_v
    \quad \text{and} \quad
    T_{w^{-1}} T_u \in \sum_{v \ne 1} P_n(A) T_v
    \quad \text{if } w < u.
  \]
  The second equation above also implies that
  \[
    T_u T_v \in \sum_{v' \ne 1} P_n(A) T_{v'}
    \quad \text{whenever } u < v^{-1},
  \]
  since $u < v^{-1} \implies u^{-1} < v$.  Thus, it follows from \cref{chicken,lem:Xid} that $\tr_f^n ( X^{-\lambda} \ba^\vee T_{w^{-1}} X^\lambda \ba T_w ) = 1$ and that
  \[
    \tr_f^n ( X^{-\lambda} \ba^\vee T_{w^{-1}} X^\mu \bb T_v ) = 0
  \]
  whenever
  \begin{itemize}
    \item $w < v$, or
    \item $w=v$ and $\lambda < \mu$, or
    \item $w=v$, $\lambda = \mu$, and $\ba \ne \bb$.
  \end{itemize}
  Thus we can find a left dual basis to the basis given in \cref{theo:cyclobasis} by inverting a unitriangular matrix.

  It remains to prove that the trace map $\tr_f^n$ is symmetric.  Let $\psi$ denote the Nakayama automorphism corresponding to $\tr_f^n$ (see \cref{Nakayama}).  So we want to show that $\psi$ is the identity automorphism.  It follows from \cref{TF,pna} that
  \[
    \tr_f^n(\bb X^\lambda \ba T_w)
    = \delta_{\lambda,0} \delta_{w,1} \tr(\bb \ba)
    = (-1)^{\bar{\ba} \bar{\bb}} \delta_{\lambda,0} \delta_{w,1} \tr(\ba \bb)
    = (-1)^{\bar{\ba} \bar{\bb}} \tr_f^n(X^\lambda \ba T_w \bb).
  \]
  So $\psi(\bb) = \bb$.

  If $\lambda \ne 0$, we have (noting that $\Delta_i$ preserves polynomial degree)
  \[
    \tr_f^n(T_i X^\lambda \ba T_w)
    \stackrel{\cref{demazure}}{=} \tr_f^n \left( X^{s_i(\lambda)} s_i(\ba) T_i T_w + z t_{i,i+1} \ba \Delta_i(X^\lambda) T_w \right)
    = 0
    = \tr_f^n(X^\lambda \ba T_w T_i).
  \]
  We also have
  \begin{align*}
    \tr_f^n(T_i \ba T_w)
    &= \tr_f^n(s_i(\ba) T_i T_w) \\
    &\stackrel{\mathclap{\cref{meatballs}}}{=}\
    \begin{cases}
      \tr_f^n ( s_i(\ba) T_{s_i w} ) & \text{if } \ell(s_i w) > \ell(w), \\
      \tr_f^n \left( s_i(\ba) T_{s_i w} + z t_{i,i+1} T_w \right) & \text{if } \ell(s_i w) < \ell(w)
    \end{cases} \\
    &= \begin{cases}
      0 & \text{if } w \ne s_i, \\
      \tr(s_i(\ba)) = \tr(\ba) & \text{if } w = s_i.
    \end{cases}
  \end{align*}
  Similarly,
  \[
    \tr_f^n(\ba T_w T_i) =
    \begin{cases}
      0 & \text{if } w \ne s_i, \\
      \tr(\ba) & \text{if } w = s_i.
    \end{cases}
  \]
  Thus $\psi(T_i) = T_i$.

  Now, if $\psi(X_i) = X_i$ for some $1 \le i \le n-1$, then
  \[
    \psi(X_{i+1})
    \stackrel{\cref{inverse}}{=} \psi(T_i X_i T_i)
    = \psi(T_i) \psi(X_i) \psi(T_i)
    = T_i X_i T_i
    \stackrel{\cref{inverse}}{=} X_{i+1}.
  \]
  Therefore, it remains to show that $\psi(X_1) = X_1$.  That is, we need to show
  \begin{equation} \label{cold}
    \tr_f^n(X_1 X^\lambda \ba T_w) = \tr_f^n(X^\lambda \ba T_w X_1)
    \quad \text{for all } \lambda, \ba, w.
  \end{equation}
  It follows from \cref{commute,pna} that \cref{cold} holds when $w(1)=1$.

  Now suppose $w = s_1 v$ for some $v \in S_n$ with $v(1)=1$.  Then
  \begin{equation} \label{pizza}
    \tr_f^n(X^\lambda \ba T_w X_1)
    = \tr_f^n(X^\lambda \ba T_1 T_v X_1)
    \stackrel{\cref{commute}}{=}\ \tr_f^n(X^\lambda \ba T_1 X_1 T_v)
    \stackrel[\cref{TF}]{\cref{inverse}}{=}\ \tr_f^n(X^\lambda X_2 T_1^{-1} s_1(\ba) T_v).
  \end{equation}
  It is clear that \cref{pizza} is equal to zero unless $\lambda_2 = d-1$ and $\lambda_3 = \dotsb = \lambda_n = 0$, which we assume from now on.   Now, for $m > 1$, we have
  \begin{equation} \label{camp}
    T_1 X_1^m
    \stackrel{\cref{demazure}}{=} X_2^m T_1 - z t_{1,2} \sum_{r=1}^m X_1^{m-r} X_2^r
    \stackrel{\cref{skein}}{=} X_2^m T_1^{-1} - z t_{1,2} \sum_{r=1}^{m-1} X_1^{m-r} X_2^r.
  \end{equation}
  Using \cref{festival}, this gives
  \[
    X_2^dT_1^{-1}
    = - \sum_{m=0}^{d-1} T_1 a_{(m)} X_1^m + z t_{1,2} \sum_{r=1}^{d-1} X_1^{d-r} X_2^r.
  \]
  Therefore, from \cref{pizza,trace}, we have
  \begin{equation} \label{scarf}
    \tr_f^n(X^\lambda \ba T_w X_1)
    = - z \sum_{m=0}^{d-1} \tr_f^n ( X_1^{\lambda_1} T_1 a_{(m)} X_1^m s_1(\ba) T_v )
    \stackrel{\cref{camp}}{=} - z \sum_{m=0}^{d-1} \tr_f^n ( X_1^{\lambda_1} s_1(a_{(m)}) X_2^m \ba T_1 T_v)
    = 0.
  \end{equation}

  Now we consider the general case where $w(1) \ne 1$.  Then we can write a reduced expression $w = w_1 s_1 w_2$, where $w_1(1) = w_2(1) = 1$.  Then, for all $g \in P_n(A)$, we have
  \begin{align}
    \tr_f^n(g T_w X_1)
    &= \tr_f^n(g T_{w_1} T_1 T_{w_2} X_1) \nonumber \\
    &\stackrel{\mathclap{\cref{twf}}}{=}\ \tr_f^n ( T_{w_1} w^{-1}(g) T_1 T_{w_2} X_1 ) + \sum_{u < w_1} \tr_f^n ( T_u f_u T_1 T_{w_2} X_1 ) \nonumber \\
    &= \tr_f^n ( w^{-1}(g) T_1 T_{w_2} X_1 T_{w_1}) + \sum_{u < w_1} \tr_f^n ( f_u T_1 T_{w_2} X_1 T_u)
    \nonumber \\
    &\stackrel{\mathclap{\cref{commute}}}{=}\ \tr_f^n ( w^{-1}(g) T_1 T_{w_2} T_{w_1} X_1) + \sum_{u < w_1} \tr_f^n ( f_u T_1 T_{w_2} T_u X_1) \quad \text{for some } f_u \in P_n(A), \label{shred}
  \end{align}
  where, in the third equality, we used the fact that $\psi(T_v) = T_v$ for all $v \in S_n$ and, in the fourth equality, we have used the fact that $u(1)=1$ whenever $u < w_1$.  Now, since $w_2(1) = 1$ and $u(1) = 1$ for all $u \le w_1$, we have that
  \[
    T_{w_2} T_u \in \sum_{v : v(1)=1} P_n(A) T_v
    \quad \text{for all } u \le w_1.
  \]
  Thus, it follows from \cref{scarf,TF} that \cref{shred} is equal to zero.  Since $\tr_f^n(X_1 g T_{w_1} T_1 T_{w_2}) = 0$, we are done.
\end{proof}

\begin{rem}
  For affine Hecke algebras (see \cref{eg:affHecke}), \cref{chasm} was proved in \cite[Th.~5.1]{MM98}.  For affine Yokonuma--Hecke algebras (see \cref{affYHalg}), it was proved in \cite[Th.~7.1]{CPd16}.
\end{rem}

\subsection{Frobenius extension structure}

Let
\[
  A_{n+1} := 1^{\otimes n} \otimes A = \Span_\kk \{a_{n+1} : a \in A\} \subseteq A^{\otimes (n+1)}.
\]
By \cref{mackey}\cref{mackey-decomp}, we have a decomposition of $(H_n^f(A,z), H_n^f(A,z))$-bimodules
\begin{equation} \label{milk}
  H_{n+1}^f(A,z) = A_{n+1} H_n^f(A,z) \oplus \bigoplus_{r=1}^{d-1} X_{n+1}^r A_{n+1} H_n^f(A,z) \oplus H_n^f(A,z) T_n H_n^f(A,z).
\end{equation}
Define the \emph{partial trace map}
\[
  \tr_{n+1}^f \colon H_{n+1}^f(A,z) \to H_n^f(A,z)
\]
to be the homomorphism of $(H_n^f(A,z), H_n^f(A,z))$-bimodules given by the projection onto the first summand in \cref{milk} followed by the map
\[
  A_{n+1} H_n^f(A,z) \to H_n^f(A,z),\quad a_{n+1} \alpha \mapsto \tr_A(a) \alpha,\quad a \in A,\ \alpha \in H_n^f(A,z).
\]
It follows that
\[
  \tr_f^{n+1} = \tr_1^f \circ \tr_2^f \circ \dotsb \circ \tr_{n+1}^f = \tr_f^n \circ \tr_{n+1}^f.
\]

\begin{prop} \label{sugar}
  The quantum cyclotomic wreath algebra $H_{n+1}^f(A,z)$ is a Frobenius extension of $H_n^f(A,z)$ with trace map $\tr_{n+1}^f$.
\end{prop}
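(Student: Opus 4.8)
The plan is to deduce the Frobenius extension property formally from facts already established, rather than to grind out an explicit dual basis. Write $B = H_{n+1}^f(A,z)$, $C = H_n^f(A,z)$, $\tr_B = \tr_f^{n+1}$, $\tr_C = \tr_f^n$, and $E = \tr_{n+1}^f \colon B \to C$. By construction $E$ is a homomorphism of $(C,C)$-bimodules, and the relation $\tr_f^{n+1} = \tr_f^n \circ \tr_{n+1}^f$ recorded before the statement says exactly that $\tr_C \circ E = \tr_B$. Both $B$ and $C$ are symmetric algebras by \cref{chasm}, and $B$ is free, hence finitely generated projective, as a right $C$-module by \cref{mackey}. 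Given all this, $B$ is a Frobenius extension of $C$ with Frobenius homomorphism $E$ as soon as the $(C,B)$-bimodule map
\[
  \lambda \colon B \to \Hom_C(B,C), \qquad \lambda(b)(x) = E(bx),
\]
is an isomorphism, where $\Hom_C(B,C)$ denotes the $\kk$-module of right $C$-module homomorphisms $B \to C$, with left $C$-action $(c\psi)(x) = c\psi(x)$ and right $B$-action $(\psi b)(x) = \psi(bx)$; that $\lambda$ is a bimodule map is immediate from $E$ being a $(C,C)$-bimodule map. So the whole proof reduces to checking that $\lambda$ is bijective.

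I would check bijectivity directly. For injectivity: $\tr_C(\lambda(b)(x)) = \tr_C(E(bx)) = \tr_B(bx)$ for all $x \in B$, so $\lambda(b) = 0$ forces $\tr_B(bx) = 0$ for all $x$, hence $b = 0$ by nondegeneracy of $\tr_B$ (i.e.\ since $B$ is symmetric). For surjectivity: given $\psi \in \Hom_C(B,C)$, the functional $\tr_C \circ \psi \in \Hom_\kk(B,\kk)$ corresponds, via the symmetry of $B$, to a unique $b_0 \in B$ with $\tr_C(\psi(x)) = \tr_B(b_0 x)$ for all $x \in B$. Then for every $c \in C$,
\[
  \tr_C\big( E(b_0 x)\, c \big) = \tr_C\big( E(b_0 x c) \big) = \tr_B(b_0 x c) = \tr_C\big( \psi(xc) \big) = \tr_C\big( \psi(x)\, c \big),
\]
using in turn the right $C$-linearity of $E$, the identity $\tr_C \circ E = \tr_B$, the defining property of $b_0$, and the right $C$-linearity of $\psi$. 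Since $\tr_C$ is nondegenerate ($C$ is symmetric), this gives $E(b_0 x) = \psi(x)$ for all $x$, that is, $\lambda(b_0) = \psi$. Hence $\lambda$ is an isomorphism and the proposition follows.

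I do not expect a genuine obstacle here: once \cref{chasm} and \cref{mackey} are granted, the argument is purely formal, and the fact that $\tr_{n+1}^f$ is a $(H_n^f,H_n^f)$-bimodule retraction with $\tr_f^n \circ \tr_{n+1}^f = \tr_f^{n+1}$ is already built into the construction via \eqref{milk}. The only mildly delicate point is the surjectivity step, where nondegeneracy of $\tr_C$ upgrades an equality of $\kk$-valued pairings to an equality in $C$. If one instead wanted an explicit Frobenius system, it could be extracted from \eqref{milk}: the summand $A_{n+1}H_n^f(A,z)$ contributes the dual pairs $\{a_{n+1}\}$, $\{(a^\vee)_{n+1}\}$ for $a \in B$; each $X_{n+1}^r A_{n+1}H_n^f(A,z)$ contributes pairs built from $X_{n+1}^{-r}(a^\vee)_{n+1}$, which lies in $H_{n+1}^f(A,z)$ because $X_{n+1}$ is invertible there (the constant term $a_{(0)}$ is invertible, so $X_1$ is invertible, and then so is each $X_j$ by \eqref{inverse}), with \cref{lem:Xid} used to rewrite negative powers; and $H_n^f(A,z) T_n H_n^f(A,z) \cong H_n^f(A,z) \otimes_{H_{n-1}^f(A,z)} H_n^f(A,z)$ is handled by induction on $n$, combining a Frobenius system for $H_n^f(A,z)/H_{n-1}^f(A,z)$ with $T_n$.
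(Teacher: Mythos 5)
Your argument is correct, but it takes a different route from the paper. The paper simply invokes \cite[Cor.~7.4]{PS16}, which says that an extension of symmetric algebras (here available by \cref{chasm}) is automatically a Frobenius extension with trace map $\alpha \mapsto \sum_{\beta \in Y} \tr_f^{n+1}(\beta^\vee \alpha)\beta$ for $Y$ a basis of $H_n^f(A,z)$, and then the only work is a three-line computation identifying this map with $\tr_{n+1}^f$, using the factorization $\tr_f^{n+1} = \tr_f^n \circ \tr_{n+1}^f$ and the left $H_n^f(A,z)$-linearity of $\tr_{n+1}^f$. You instead prove the needed special case of that general result from scratch: you verify directly that $\lambda(b)(x) = \tr_{n+1}^f(bx)$ gives a $(C,B)$-bimodule isomorphism $B \to \Hom_C(B,C)$, with injectivity from nondegeneracy of $\tr_f^{n+1}$ and surjectivity from nondegeneracy of $\tr_f^n$ together with $\tr_f^n \circ \tr_{n+1}^f = \tr_f^{n+1}$; combined with the finite freeness of $H_{n+1}^f(A,z)$ over $H_n^f(A,z)$ from \cref{mackey}, this is a complete and self-contained proof. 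What your approach buys is independence from the external reference (and it makes transparent exactly which properties are used: the two symmetric structures, the bimodule property of the partial trace, and the compatibility of traces); what the paper's approach buys is brevity, since the formal argument you wrote out is exactly the content of the cited corollary. One small caveat: in your closing remark you call $\tr_{n+1}^f$ a bimodule \emph{retraction}, but it need not restrict to the identity on $H_n^f(A,z)$, since $\tr_A(1)$ need not equal $1$ (e.g.\ for zigzag algebras); fortunately your actual argument never uses this, only the bimodule property and $\tr_f^n \circ \tr_{n+1}^f = \tr_f^{n+1}$, so nothing breaks. The sketch of an explicit dual basis extracted from \cref{milk} is not needed for the proof and can be dropped.
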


\begin{proof}
  Since $H_n^f(A,z) \subseteq H_{n+1}^f(A,z)$ are both symmetric algebras, it follows from \cite[Cor.~7.4]{PS16} that $H_{n+1}^f(A,z)$ is a Frobenius extension of $H_n^f(A,z)$ with trace map
  \[
    \alpha \mapsto \sum_{\beta \in Y} \tr_f^{n+1}(\beta^\vee \alpha) \beta,
  \]
  where $Y$ is a basis of $H_n^f(A,z)$.  (Note that $\beta^\vee$ denotes the \emph{right} dual of $\beta$ in \cite{PS16}, whereas it denotes the \emph{left} dual in the current paper.)  Since
  \[
    \sum_{\beta \in Y} \tr_f^{n+1}(\beta^\vee \alpha) \beta
    = \sum_{\beta \in Y} \tr_f^n \left( \tr_{n+1}^f(\beta^\vee \alpha) \right) \beta
    = \sum_{\beta \in Y} \tr_f^n \left( \beta^\vee \tr_{n+1}^f(\alpha) \right) \beta
    = \tr_{n+1}^f(\alpha),
  \]
  the result follows.
\end{proof}

It follows from \cref{sugar} that the functors ${}^f \Ind_n^{n+1}$ and ${}^f \Res_n^{n+1}$ are both left and right adjoint to each other.  Indeed, induction is always left adjoint to restriction.  It is also right adjoint to restriction precisely when the larger algebra is a Frobenius extension of the smaller.


\bibliographystyle{alphaurl}
\bibliography{QAWA}

\end{document}